\newtheorem{theorem}{Theorem}[section]
\newenvironment{taggedtheorem}[1]
{\taggedtheoremx}
{\endtaggedtheoremx}
\newtheorem{lemma}[theorem]{Lemma}
\newtheorem{corollary}[theorem]{Corollary}
\newtheorem{proposition}[theorem]{Proposition}
\newtheorem{example}[theorem]{Example}
\newtheorem{remark}[theorem]{Remark}
\newcommand{\F}{\mathcal{F}}
\newcommand{\HH}{\mathcal{H}}
\newcommand{\PP}{\mathcal{P}}
\newcommand{\II}{\mathcal{I}}
\newcommand{\QQ}{\mathcal{Q}}
\newcommand{\N}{\mathds{N}}
\newcommand{\Z}{\mathds{Z}}
\newcommand{\C}{\mathds{C}}
\newcommand{\R}{\mathds{R}}
\newcommand{\D}{\mathds{D}}
\newcommand{\Q}{\mathds{Q}}
\newcommand{\T}{\mathds{T}}
\newcommand{\mcn}{\mathcal N}
\date{\today}
\title{Cyclicity of composition operators on the Fock Space}
\author{Frédéric Bayart and Sebastián Tapia-García}
\address{Frédéric Bayart, Sebastián Tapia-García}
\address{Laboratoire de Math\'ematiques Blaise Pascal UMR 6620 CNRS, Universit\'e Clermont Auvergne, Campus universitaire C\'ezeaux, 3 place Vasarely, 63178 Aubière Cedex, France.}
\address{Email address: frederic.bayart@uca.fr, sgtg22@gmail.com}
\begin{document}

\maketitle

\begin{abstract}In this paper we provide a full characterization of cyclic composition operators defined on the $d$-dimensional Fock space $\F(\C^d)$ in terms of their symbol. Also, we study the supercyclicity and convex-cyclicity of this type of operators. We end this work by computing the approximation numbers of compact composition operators defined on $\F(\C^d)$.
\end{abstract}

\section{Introduction}
Let $\C^d$ be the $d$-dimensional complex Euclidean space with $d\geq 1$. 
The classical Fock space on $\C^d$ is defined by
\[\F(\C^d):=\left\{f\in \mathcal{H}(\C^d):~\|f\|^2:=\dfrac{1}{(2\pi)^d}\int_{\C^d}|f(z)|^2e^{-\frac{|z|^2}{2}}dA(z)<\infty\right \},\]
where $dA$ stands for the Lebesgue measure on $\C^d$, $|z|=\sqrt{\langle z,z\rangle}$ and $\langle z,w\rangle=\sum_{j=1}^d z_j \overline{w_j}$. 
The space $\mathcal F(\C^d)$ is a separable Hilbert space equipped with the inner product
\[\langle f,g \rangle:= \dfrac{1}{(2\pi)^d}\int_{\C^d} f(z)\overline{g(z)}e^{-\frac{|z|^2}{2}}dA(z). \]
Observe that we do not distinguish between the inner products of $\C^d$ and $\F(\C^d)$.
The space $\F(\C^d)$ is also a reproducing kernel Hilbert space. 
It is well-know that the reproducing kernel function is given by 
\[k_w:~z\in\C^d\mapsto k_w(z)= \exp\left(\frac{\langle z,w\rangle }{2}\right),\] 
which has norm $\|k_w\|=\exp\left(\frac{|w|^2}{4}\right)$.
Also, the set of polynomials $\{z^\alpha:\alpha\in\N^d\}$ forms an orthogonal basis of $\F(\C^d)$ and \[\|z^\alpha\|^2= 2^{|\alpha|}\prod_{j=1}^d\alpha_j!.\]
In this paper, we are interested in composition operators on $\mathcal F(\C^d)$. 
Let $\varphi:\C^d\to\C^d$ be a holomorphic function. The composition operator with symbol $\varphi$ is defined by
\[f\in \mathcal{H}(\C^d)\mapsto C_\varphi(f):= f\circ \varphi.\] 

Carswell et al \cite{CMS} have characterized when $C_\varphi$ defines a bounded composition
operator on $\mathcal F(\C^d)$: this holds if and only if $\varphi(z)=Az+b$ where $A\in \C^{d\times d}$, with $\|A\|\leq 1$ and $b\in\C^d$ which satisfy $\langle Av,b\rangle=0$ for all $v\in\C^d$ with $|Av|=|v|$. 
Moreover, $C_\varphi$ is compact if and only if $\|A\|<1$.
Since then, many works have been done to characterize properties of $C_\varphi$
in terms of the properties of the symbol $\varphi$, sometimes only when $d=1$:
see for instance \cite{Du,FZ,GI,JPZ}.
%
%An important subject in operator theory is to characterize properties of $C_\varphi$ and $W_{\psi,\varphi}$ in terms of the properties of the symbols $\psi$ and $\varphi$.
%The most common issues are certainly the characterization of boundedness and compactness of (weighted) composition operators whenever they are restricted to some concrete space of holomorphic functions, as the Hardy space, Dirichlet space, Bergman space among others.
%Further information can be found in \cite{CM} and references therein.
%The following result from Carswell, MacCluer and Schuster characterizes boundedness and compactness of composition operators defined on $\F(\C^d)$.
%A similar result for weighted composition operators defined on the Fock space can be found in \cite{U,Z2}.
%\begin{taggedtheorem}{A}\cite[Theorem 1 and Theorem 2]{CMS}\label{Theorem A}
%	Let $\varphi:\C^d\to\C^d$ be a holomorphic mapping.
%	The composition operator $C_\varphi$ is bounded on $\mathcal{F}(\C^d)$ if and only if 
%\end{taggedtheorem}

In this paper we are concerned in the dynamical properties of composition operators defined on $\F(\C^d)$.  Let us recall the relevant definitions. Let $X$ be a separable Banach space, let $T\in\mathcal{L}(X)$ be a bounded linear operator defined on $X$ and let $x\in X$. 
The orbit of $x$ under the action of $T$ is the set $\textup{orb}(T,x):=\{T^nx:~n\in\N\}$. 
The operator $T\in \mathcal{L}(X)$ is said cyclic if there is $x\in X$ such that $\textup{span}(\textup{orb}(T,x))$ is dense in $X$. 
In this case, we say that $x$ is a cyclic vector for $T$.
Similarly, we say that $T$ is supercyclic or hypercyclic if there is $x\in X$ such that $\C\cdot \textup{orb}(T,x)$ or $\textup{orb}(T,x)$ is dense in $X$ respectively.

There is a rich literature concerning cyclicity, supercyclicity or hypercyclicity of composition operators defined on the Hardy space, Bergman space, Dirichlet space; see for instance \cite{BM,BS,DK,ZZ}.\\

Regarding Fock spaces, the cyclic composition operators on the Fock space of $\C$ have been characterized in \cite[Theorem 4.2]{GI}.

\begin{taggedtheorem}{A}\label{Theorem A}
	Let $a,b\in \C$ be such that $C_{az+b}$ induces a bounded composition operator on $\F(\C)$.
	Then, $C_{az+b}$ is cyclic on $\mathcal{F}(\C)$ if and only 
$a\neq 0$ and $a$ is not a root of unity. 
\end{taggedtheorem}

In higher dimensions, cyclicity has only been characterized in the very particular case where 
$A$ is diagonal and unitary (note that this implies $b=0$). 

\begin{taggedtheorem}{B}\cite[Theorem 5.3]{JPZ}\label{Theorem B}
	Let $A=\textup{diag}(e^{i\theta_1},...,e^{i\theta_d})\in \C^{d\times d}$.
	The composition operator $C_{Az}$ is cyclic on $\mathcal{F}(\C^d)$ if and only if the set $\{\pi,\theta_1,...,\theta_n\}$ is $\Q$-linearly independent.
\end{taggedtheorem}

Our main result, which extends both Theorem \ref{Theorem A} and Theorem \ref{Theorem B}, solves the problem of cyclicity in full generality. 

\begin{theorem}\label{Theorem 1}
	Let $A\in\C^{d\times d}$, $b\in \C^d$ and $\varphi(z)=Az+b$ be such that $C_\varphi$ induces a bounded composition operator on $\mathcal{F}(\C^d)$.
	Then, $C_\varphi$ is cyclic if and only if the following conditions are satisfied :
	\begin{itemize}
	\item $A$ is invertible;
	\item $A$ is diagonalizable or its canonical Jordan form admits at most one Jordan block, whose size is exactly $2$;
	\item  if $\lambda:=(\lambda_j)_{j=1}^{\hat{d}}$ denotes the sequence of eigenvalues of $A$, repeated by geometric multiplicity (therefore $\hat{d}= d$ or $d-1$), then for any $\alpha\in \Z^{\hat{d}}\setminus \{0\}^{\hat{d}}$, $\lambda^\alpha\neq 1$.
	\end{itemize}
\end{theorem}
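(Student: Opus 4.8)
\medskip
\noindent\textit{Proof proposal.}

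The plan is to exploit the fact that $C_\varphi$ preserves each finite-dimensional space $\PP_n$ of polynomials of degree at most $n$, and that $\bigcup_n \PP_n$ is dense in $\F(\C^d)$. Since composition with a unitary $U$ of $\C^d$ is a surjective isometry of $\F(\C^d)$ and conjugates $C_\varphi$ into $C_{\psi}$ with $\psi(z)=(U^{-1}AU)z+U^{-1}b$, I may first replace $A$ by a Schur (upper-triangular) form and assume its diagonal displays the eigenvalues $\lambda_1,\dots,\lambda_d$. On $\PP_n$ the operator $C_\varphi$ is then block upper-triangular with respect to the degree filtration, its diagonal blocks being the induced maps on the homogeneous quotients $\PP_k/\PP_{k-1}\cong\mathrm{Sym}^k(\C^d)$, namely $\mathrm{Sym}^k(A)$, with spectrum $\{\lambda^\alpha:|\alpha|=k\}$. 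I will repeatedly use the elementary fact that a cyclic operator on a Hilbert space has an adjoint all of whose eigenspaces are at most one-dimensional: if $T^*\xi_1=\mu\xi_1$ and $T^*\xi_2=\mu\xi_2$ with $\xi_1,\xi_2$ independent, then for any $x$ the vector $\langle x,\xi_2\rangle\xi_1-\langle x,\xi_1\rangle\xi_2$ (or $\xi_1$ itself, if one pairing vanishes) annihilates the whole orbit, so no $x$ can be cyclic.

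For necessity I argue by contraposition. If $A$ is not invertible then $\varphi(\C^d)$ lies in a proper affine subspace, so every function in the range of $C_\varphi$ is constant along $\ker A$; hence $\textup{span}\,\textup{orb}(C_\varphi,f)\subseteq\C f+\overline{\mathrm{Ran}\,C_\varphi}$ sits in a closed subspace of infinite codimension and $C_\varphi$ is not cyclic. If condition (3) fails, say $\lambda^{\beta}=\lambda^{\gamma}$ with $\beta\neq\gamma$, or if condition (2) fails, then some $\mathrm{Sym}^k(A)$ is derogatory: condition (3) guarantees that the eigenvalues $\lambda^\alpha$ are pairwise distinct across all multi-indices, so the only source of a repeated eigenvalue of $C_\varphi$ is the Jordan structure of $A$, and the representation-theoretic computation $\mathrm{Sym}^k(J_2(\lambda))=J_{k+1}(\lambda^k)$ shows that a single Jordan block of size $2$ keeps every $\mathrm{Sym}^k(A)$ nonderogatory, whereas a block of size $\geq 3$ (already $\mathrm{Sym}^2(J_3)=J_5\oplus J_1$) or two blocks of size $\geq 2$ force a two-dimensional geometric eigenspace at some $\lambda^\alpha$. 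A derogatory $\mathrm{Sym}^k(A)$, together with the disjointness of the spectra of the diagonal blocks (again condition (3)), yields a two-dimensional geometric eigenspace of $C_\varphi$; passing to $C_\varphi^*$ via the Riesz projection at the corresponding isolated eigenvalue then produces a two-dimensional eigenspace of the adjoint, contradicting cyclicity.

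For sufficiency, assume the three conditions. Since condition (3) makes all $\lambda^\alpha$ distinct, the diagonal blocks $\mathrm{Sym}^k(A)$ have pairwise disjoint spectra across degrees; by a Sylvester/Roth decoupling argument $C_\varphi|_{\PP_n}$ is similar to $\bigoplus_{k\le n}\mathrm{Sym}^k(A)$ and is nonderogatory for every $n$. Consequently there is a complete system of (generalized) eigenvectors $\{h_\alpha\}\subseteq\bigcup_n\PP_n$, consistent across $n$, each lying in a single Jordan chain over $\lambda^\alpha$, and spanning a dense subspace. I then take as candidate cyclic vector $x=\sum_\alpha c_\alpha h_\alpha$ with $c_\alpha\neq 0$ chosen so that $\sum_\alpha|c_\alpha|\,\|h_\alpha\|<\infty$, ensuring absolute convergence. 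If $y\perp\textup{orb}(C_\varphi,x)$, expanding $\langle C_\varphi^n x,y\rangle=0$ in the generalized eigenbasis reduces cyclicity to the uniqueness statement: whenever $\sum_\alpha (\lambda^\alpha)^n\, p_\alpha(n)=0$ for all $n\ge0$, with the $\lambda^\alpha\in\overline{\D}$ distinct and the polynomials $p_\alpha$ having $\ell^1$-controlled coefficients encoding $c_\alpha\langle h_\alpha,y\rangle$ and the Jordan chains, all these coefficients vanish; this forces $\langle h_\alpha,y\rangle=0$ for every $\alpha$, hence $y=0$.

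The main obstacle is this final uniqueness (moment) problem in the non-compact regime, where $\|A\|=1$ and infinitely many eigenvalues $\lambda^\alpha$ have modulus one. In the compact case $\|A\|<1$ it is elementary, since only finitely many $|\lambda^\alpha|$ exceed any threshold and the dominant terms can be peeled off by a Vandermonde argument. In the unimodular case I expect to invoke the theorem of F.\ and M.\ Riesz: the vanishing of all nonnegative moments says that the purely atomic measure $\sum_\alpha a_\alpha\delta_{\lambda^\alpha}$ on the unit circle is analytic, hence absolutely continuous, hence zero. Making this rigorous across the mixed-modulus situation, and likewise justifying the multiplicity matching between $C_\varphi$ and $C_\varphi^*$ at non-isolated points of the spectrum in the necessity direction, is where the analytic care will be concentrated.
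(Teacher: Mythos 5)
Your necessity argument has a genuine gap at the step where you pass from a two-dimensional geometric eigenspace of $C_\varphi$ (detected via a derogatory $\mathrm{Sym}^k(A)$) to a two-dimensional eigenspace of $C_\varphi^*$ ``via the Riesz projection at the corresponding isolated eigenvalue''. This works when $\|A\|<1$, because then $C_\varphi$ is compact and every nonzero eigenvalue is isolated with finite algebraic multiplicity. But the theorem also covers $\|A\|=1$, and there the relevant eigenvalues are typically \emph{not} isolated: if $A$ contains both a Jordan block $\begin{pmatrix}1/2&a\\0&1/2\end{pmatrix}$ (or a size-$3$ block, or two blocks of size $2$) and a unimodular eigenvalue $e^{i\theta}$ with $\theta\notin\pi\Q$, then the point spectrum of $C_\varphi$ contains $\{2^{-k}e^{im\theta}:~k,m\in\N\}$, whose closure contains the whole circle of radius $|\lambda^\alpha|$ on which your repeated eigenvalue sits, so no Riesz projection at that point exists and the multiplicity transfer to $C_\varphi^*$ is unavailable. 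This is exactly the obstruction you flag in your last sentence, and it is not a technicality to be smoothed over: it requires a different mechanism. The paper supplies two. When condition (3) fails, it writes down two independent eigenvectors of $C_\varphi^*=W_{k_b,A^*z}$ \emph{explicitly}, namely $L^{\alpha^+}e^{\langle z,c\rangle}$ and $L^{\alpha^-}e^{\langle z,c\rangle}$ with $(I-A)c=b/2$ (Proposition~\ref{non-cyclicity 2}), which needs no spectral isolation; for the Jordan obstructions it drops the adjoint altogether and shows that the orbit of \emph{any} $f$, pushed by an explicit bounded projection onto a finite-dimensional $C_\varphi$-invariant subspace, spans at most $n+m+1$ dimensions inside a space of dimension $(n+1)(m+1)$ (Propositions~\ref{2 jordan blocks} and~\ref{1 jordan block}). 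You would need to import one of these two ideas.

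The sufficiency half has the same status. The reduction to the moment problem $\sum_\alpha a_\alpha\Lambda_\alpha^n p_\alpha(n)=0$ is sound, and your F.\ and M.\ Riesz idea does dispose of a \emph{single} unimodular layer (almost periodicity already suffices: an absolutely convergent sum $\sum_j a_jw_j^n$ with distinct $w_j\in\T$ that tends to $0$ as $n\to+\infty$ vanishes identically). But what you defer is the actual content of the proof: in the mixed case every modulus level $\{\alpha:~|\Lambda_\alpha|=R\}$ is infinite (each contractive eigenvalue product gets multiplied by all unimodular $\mu^\beta$), the Jordan block produces polynomial factors $p_\alpha$ of unbounded degree across levels, and the coefficients $c_\alpha$ must be chosen in advance so that every peeled limit converges. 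Handling this triple interaction is precisely the paper's Section~\ref{section 3}: the weights $\rho_k$, the uniform estimate of Proposition~\ref{computation for induction}, Kronecker's theorem packaged in Lemma~\ref{obtaining L alpha} together with Lemmas~\ref{lem:combinatorial} and~\ref{invertible matrix}, and a cyclic vector of the form $k_w\sum_\alpha\rho_{|\alpha|}L^\alpha$ --- a reproducing kernel in the unitary directions times a series in the contractive ones, not a pure eigenvector expansion, with the unimodular dynamics recovered through the kernels $k_{\widehat\varphi^j(w)}$. In short, your outline is a workable plan for the compact case $\|A\|<1$, but both halves are open exactly where the non-compact case begins.
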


Observe that the third condition may be rewritten by saying that
 if $\lambda:=(\lambda_j)_{j=1}^{\hat{d}}$ denotes the sequence of eigenvalues of $A$, repeated by geometric multiplicity (therefore $\hat{d}= d$ or $d-1$), then
	for any $\alpha\in \Z^{\hat{d}}\setminus \{0\}^{\hat{d}}$ such that $\lambda^\alpha=\exp(i\theta)$ for some $\theta\in\R$, the pair $\{\pi,\theta\}$ is $\Q$-linearly independent. 
	
\begin{example}
\begin{itemize}
\item Let $A=\begin{pmatrix} \frac{e^{i\theta_1}}2&0\\0&\frac{e^{i\theta_2}}3\end{pmatrix}$. Then $C_{Az}$ is cyclic on $\mathcal F(\C^2)$ for all values of $\theta_1,\theta_2$.
\item Let $A=\begin{pmatrix} \frac{e^{i\theta_1}}2&0\\0&\frac{e^{i\theta_2}}4\end{pmatrix}$. Then $C_{Az}$ is cyclic on $\mathcal F(\C^2)$ if and only if $\theta_2-2\theta_1\notin \pi\Q$. 
\item Let $(\rho_j)_{j=1}^d\subset(-\infty,0)$ be $\Q$-linearly independent, let $A=\textup{diag}(\exp(\rho_1),...,\exp(\rho_d))$ and let $b\in \C^d$. Then $C_{Az+b}$ is cyclic on $\F(\C^d)$.
\end{itemize}
\end{example}

The proof of Theorem \ref{Theorem 1} will be rather long. We will start in Section~\ref{section 2} by studying two simple cases  which are significant enough to point out the main ideas behind the proof. 
In the same section, we will also expose several lemmas.
In Section~\ref{section 3} we prove the sufficient condition of Theorem~\ref{Theorem 1}
whereas the necessary condition of Theorem~\ref{Theorem 1} will be presented in Section~\ref{section 4}. 
In Section~\ref{section 5} we characterize the set of cyclic vectors for compact composition operators on $\mathcal F(\C^d)$.
The remainder of the paper is devoted to solve several problems
on composition operators on $\mathcal F(\C^d)$ where the techniques introduced
to prove Theorem~\ref{Theorem 1} are useful. 
In Section~\ref{section 6} we show that bounded composition operators on $\F(\C^d)$ are never supercyclic with respect the pointwise convergence topology (and thus neither weakly-supercyclic) nor convex-cyclic. 
In Section~\ref{section 7} we compute the approximation numbers of compact composition operators on $\F(\C^d)$.\\

\noindent\textbf{Notation.} For $z\in \C\setminus\{0\}$, we denote by $\arg(z)\in [0,2\pi)$ the number such that $z= |z|\exp(i\arg(z))$. 
For a matrix $A\in\C^{d\times d}$, its norm is defined by $\|A\|:= \sup \{|Av|:~|v|=1\}$, its transpose is denoted $A^T$ and its Hermitian transpose is denoted by $A^*$.
For $(x_{j})_{j=1}^d\subset\C$, we denote by $\textup{diag}(x_1,...,x_d)$ the diagonal matrix with entries $(x_{j})_{j=1}^d$.
For any $\alpha\in \Z^d$, we denote the length of $\alpha$ by $|\alpha|:= \sum_{i=1}^d |\alpha_i|$ and for any vector $\lambda\in \C^d$ we write $\lambda^\alpha:=\prod_{i=1}^d {\lambda_i}^{\alpha_i}$. 
We also consider the partial order $\leq$ on $\N^d$ defined as follows: $\alpha \leq \beta $ if and only if $\alpha_j\leq \beta_j$ for all $j=1,...,d$. 
By $\D, \overline{\D}$ and $\T$ we respectively denote the open complex unit disc, its closure and its boundary. Finally, when dealing with the adjoint of composition operators,
we will require to introduce weighted composition operators: for $\varphi:\mathbb C^d\to\mathbb C^d$ and $\psi:\mathbb C^d\to \mathbb C$,  the weighted composition operator with symbols $(\psi,\varphi)$ is defined by
\[f\in \mathcal{H}(\C^d)\mapsto W_{\psi,\varphi}(f):= \psi \cdot f\circ \varphi.\] 
Further information about weighted composition operators defined on $\F(\C)$ can be found in \cite{CG, M,M2,M3} and references therein.
\section{Preliminaries}\label{section 2}

\subsection{Two particular cases}

This subsection is purely expository. It aims to point out the main difference between Jordan
blocks of size $2$ and of size $3$. Denote for $d\geq 1$ and $N\geq 0$ by
$\mathcal P_{\textrm{hom}}(N,d)$ the set of all homogeneous polynomials of degree $N$
in $d$ variables, namely
$$\mathcal P_{\textrm{hom}}(N,d)=\textrm{span}(z_1^{\alpha_1}\cdots z_d^{\alpha_d}:\alpha_1+\cdots+\alpha_d=N).$$
Let us recall that $\dim(\mathcal P_{\textrm{hom}}(N,d))=\binom{N+d-1}{d-1}$. Let $P_{N,d}$
be the the orthogonal projection on $\mathcal P_{\textrm{hom}}(N,d)$ in $\mathcal F(\C^d)$. Let us also denote
$$A_1=\begin{pmatrix}1/2&a\\0&1/2\end{pmatrix}\quad A_2=\begin{pmatrix}1/2&a&0\\0&1/2&a\\ 0&0&1/2\end{pmatrix}$$
where $a\in\C\backslash\{0\}$ is so that $C_{A_1z}$ and $C_{A_2z}$ are bounded operators on
$\mathcal F(\C^2)$ and $\mathcal F(\C^3)$ respectively. We are going to show that
\begin{enumerate}[(a)]
\item for any $N$ large enough, for any $f\in\mathcal F(\C^3)$, $\textrm{span}(P_{N,3}(C_{A_2z}^j f):\ j\geq 0)$ is not dense in $\mathcal P_{\textrm{hom}}(N,3)$, which prevents $f$
to be a cyclic vector for $C_{A_2z}$;
\item for any $N\geq 0$, there exists $f\in\mathcal F(\C^2)$ such that $\textrm{span}(C_{A_1z}^j f:\ j\geq 0)$ is equal to $\mathcal P_{\textrm{hom}}(N,2)$.
\end{enumerate}
Let us start with (a) and write $f=\sum_{\alpha}c_\alpha z^\alpha$. Computing $A_2^j$, we easily get that, for all $j\geq 0$, 
$$C_{A_2z}^j f=\sum_{\alpha\in\N^3}\frac{c_\alpha}{2^{j|\alpha|}}(z_1+2jaz_2+2j(j-1)a^2z_3)^{\alpha_1}(z_2+2jaz_3)^{\alpha_2}z_3^{\alpha_3}$$
so that, expanding the product,
$$P_{N,3}(C_{A_2z}^j f)=\frac1{2^{jN}}\sum_{k=0}^{2N}j^k L_k$$
where $L_0,\dots,L_{2N}$ are fixed polynomials in $\mathcal P_{\textrm{hom}}(N,3)$. Therefore
$$\dim(\textrm{span}(P_{N,3}(C_{A_2z}^j f):\ j\geq 0))\leq 2N+1<\dim(\mathcal P_{\textrm{hom}}(N,3))$$
provided $N$ is large enough.

Regarding (b), let $f=\sum_{|\alpha|=N}z^\alpha=\sum_{k=0}^N z_1^k z_2^{N-k}$. Then, for all $j\geq 0$, 
\begin{align*}
C_{A_1z}^j (f)&=\sum_{k=0}^N \frac{1}{2^{Nj}}(z_1+2jaz_2)^k z_2^{N-k}\\
&=\frac1{2^{Nj}}\sum_{k=0}^N j^k L_k
\end{align*}
where $(L_0,\dots,L_N)$ is a basis of $\mathcal P_{\textrm{hom}}(N,2)$. Now, 
$$\frac{2^{Nj}C_{A_1z}^j(f)}{j^N}\to L_N\in \textrm{span}(C_{A_1z}^j f:\ j\geq 0).$$
Hence, 
$$\frac{2^{Nj}C_{A_1z}^j(f)-j^NL_N}{j^{N-1}}\to L_{N-1}\in \textrm{span}(C_{A_1z}^j f:\ j\geq 0)$$
and iterating we get that $\textrm{span}(C_{A_1z}^j f:\ j\geq 0)=\mathcal P_{\textrm{hom}}(N,2)$.

\medskip

The proof of Theorem \ref{Theorem 1} will rely on the two ideas exposed above. 
We will also need a supplementary argument, based on Kronecker's theorem, to handle different eigenvalues. Working with matrices which are maybe not unitarily equivalent to their Jordan form and with affine maps instead of linear ones will cause some extra troubles
which require the introduction of the tools which are described in the remaining part of this section.

\subsection{Useful lemmas}

In this section we collect some facts which will help us in the forthcoming proof of Theorem~\ref{Theorem 1}.

\begin{proposition}\label{product of functions}
	Let $f,g\in \mathcal{F}(\C^d)$ be two functions. Assume that there are two disjoint sets $I_f, I_g\subset \{1,\cdots,d\}$ such that $f(z)= f((z_i)_{i\in I_f})$ and $g(z)= g((z_i)_{i\in I_g})$. Then the function defined by $z\mapsto h(z):=f(z)g(z)$ belongs to $ \mathcal{F}(\C^d)$.
\end{proposition}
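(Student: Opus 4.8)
The plan is to compute $\|h\|^2$ directly from the integral defining the norm on $\F(\C^d)$ and to exploit the disjointness of $I_f$ and $I_g$ to factor that integral. Since $h=fg$ is a product of holomorphic functions it is itself holomorphic, so the only point to check is that $\|h\|<\infty$.

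First I would partition the coordinates into three disjoint groups. Set $I_0:=\{1,\dots,d\}\setminus(I_f\cup I_g)$ and group the variables as $u=(z_i)_{i\in I_f}$, $v=(z_i)_{i\in I_g}$ and $w=(z_i)_{i\in I_0}$, so that $f(z)=f(u)$, $g(z)=g(v)$ and $|z|^2=|u|^2+|v|^2+|w|^2$. The integrand $|f(u)|^2|g(v)|^2 e^{-|z|^2/2}$ is non-negative, so Tonelli's theorem applies and writes the normalized integral over $\C^d$ as a product of three integrals, one over each group of variables, the Gaussian weight splitting accordingly.

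Next I would evaluate the three factors. The integral in the variables $w$ reduces to a product of one-dimensional Gaussian integrals $\frac1{2\pi}\int_{\C} e^{-|\zeta|^2/2}\,dA(\zeta)=1$, so that factor equals $1$. Using that $f$ does not depend on $v$ or $w$, the same computation shows that the factor in the $u$ variables equals $\|f\|^2$, and likewise the factor in the $v$ variables equals $\|g\|^2$. Hence $\|h\|^2=\|f\|^2\,\|g\|^2<\infty$, which is the desired conclusion.

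The delicate points here are merely bookkeeping: one must check that the normalizing constant $(2\pi)^{-d}$ distributes correctly across the three groups and that the trivial Gaussian factors really contribute $1$. An alternative, purely algebraic route avoids analysis altogether: expanding $f=\sum_\alpha a_\alpha z^\alpha$ and $g=\sum_\beta b_\beta z^\beta$ with $\alpha$ supported on $I_f$ and $\beta$ on $I_g$, disjointness gives $(\alpha_j+\beta_j)!=\alpha_j!\,\beta_j!$ for every $j$, whence $\|z^{\alpha+\beta}\|^2=\|z^\alpha\|^2\|z^\beta\|^2$ via the formula $\|z^\alpha\|^2=2^{|\alpha|}\prod_j\alpha_j!$; since the exponents $\alpha+\beta$ are pairwise distinct, orthogonality of the monomials yields $\|h\|^2=\|f\|^2\|g\|^2$ directly. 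I expect the main (though minor) obstacle in either approach to be justifying the factorization rigorously — Tonelli's theorem for the non-negative integrand in the first approach, or the interchange of summation with the norm in the second.
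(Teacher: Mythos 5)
Your proposal is correct and follows essentially the same route as the paper: the paper's proof consists precisely of the observation that the norm factorizes, $\|h\|=\|f\|\cdot\|g\|$, which your Tonelli argument (and, alternatively, your monomial-orthogonality argument) makes rigorous. The only difference is that you supply the details of the Gaussian-integral factorization that the paper leaves implicit.
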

\begin{proof}
	It follows from the definition of the norm on $\mathcal{F}(\C^d)$. Indeed $\|h\|=\|f\|\cdot \|g\|$.
\end{proof}
\begin{proposition}\label{conjugated operator}
	Let $\varphi(z):=Az+b$ be such that $C_\varphi$ induces a bounded composition operator on $\mathcal{F}(\C^d)$.
	Then, there are $S\in \C^{d\times d}$ and $v\in\C^d$ such that $C_\varphi$ is similar to $C_{Sz+v}$ and 
	\[
	S= \begin{pmatrix} T& 0  \\ 
	0 & U \\  
	\end{pmatrix},
	\]
	where $T\in\C^{p\times p}$ is an upper triangular matrix such that its diagonal contains all the eigenvalues of $A$ of modulus lower than $1$ and $U\in\C^{(d-p)\times (d-p)}$ is a diagonal matrix containing all the eigenvalues of $A$ of modulus $1$. Moreover $v\in \C^p\times\{0\}^{d-p}$.
\end{proposition}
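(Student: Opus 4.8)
The plan is to realize the similarity through a \emph{unitary} change of variables, since these are exactly the linear symbols for which the induced composition operator is boundedly invertible on $\F(\C^d)$. Concretely, if $M\in\C^{d\times d}$ is unitary then $\psi(z)=Mz$ satisfies $|\psi(z)|=|z|$, so $C_\psi$ preserves the Gaussian weight and is an isometric isomorphism of $\F(\C^d)$, with $C_\psi^{-1}=C_{M^{-1}z}$. A direct computation with the contravariant composition law $C_\psi C_\varphi=C_{\varphi\circ\psi}$ gives $C_\psi^{-1}C_\varphi C_\psi=C_{\Phi}$ where $\Phi(z)=\psi\circ\varphi\circ\psi^{-1}(z)=MAM^{-1}z+Mb$. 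Thus it suffices to produce a unitary $M$ for which $S:=MAM^{-1}=MAM^{*}$ has the announced block form, and then to check that $v:=Mb$ has vanishing last $d-p$ coordinates.

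The heart of the argument is the spectral structure of the contraction $A$ (recall $\|A\|\le 1$, so every eigenvalue lies in $\overline{\D}$). First I would isolate the unimodular part. If $Aw=\lambda w$ with $|\lambda|=1$, then $|Aw|=|w|$ forces $\langle(I-A^*A)w,w\rangle=0$; since $I-A^*A\ge 0$ this yields $A^*Aw=w$ and hence $A^*w=\bar\lambda w$. Consequently every eigenvector of $A$ associated with a unimodular eigenvalue is also an eigenvector of $A^*$, so the subspace $E$ spanned by these eigenvectors is invariant under both $A$ and $A^*$; in particular $E^\perp$ is $A$-invariant. A short computation with the powers $A^n$ (again using $|A^nw|\le|w|$, where the quadratic-in-$n$ term would otherwise blow up) rules out Jordan blocks at unimodular eigenvalues, and comparing $\langle Aw_1,w_2\rangle$ computed via $A$ and via $A^*$ shows that eigenvectors for distinct unimodular eigenvalues are orthogonal. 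Hence $A$ restricted to $E$ is diagonalizable in an orthonormal eigenbasis, yielding a diagonal matrix $U$ whose entries are exactly the modulus-$1$ eigenvalues.

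I would then choose an orthonormal basis $u_1,\dots,u_p$ of $E^\perp$ realizing a Schur (upper-triangular) form of the contraction $A|_{E^\perp}$, whose diagonal carries precisely the eigenvalues of modulus $<1$, together with the orthonormal eigenbasis $u_{p+1},\dots,u_d$ of $E$ built above. Letting $P$ be the unitary matrix with columns $u_1,\dots,u_d$ and setting $M=P^*$, the matrix $S=MAM^{-1}=P^*AP$ is block diagonal $\begin{pmatrix}T&0\\0&U\end{pmatrix}$ with $T$ upper triangular of the required type. For the affine part, the boundedness criterion $\langle Av,b\rangle=0$ for all $v$ with $|Av|=|v|$ applies to each unimodular eigenvector $u_j$ (as $|Au_j|=|u_j|$), giving $\lambda_j\langle u_j,b\rangle=0$ and hence $\langle u_j,b\rangle=0$ for $j>p$; since the $j$-th coordinate of $v=Mb=P^*b$ equals $\langle b,u_j\rangle$, this shows $v\in\C^p\times\{0\}^{d-p}$.

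The main obstacle is the second step: proving that the unimodular eigenvalues of the contraction $A$ are semisimple and that their eigenspaces reduce $A$ and carry a unitary action, i.e.\ the finite-dimensional canonical decomposition $\C^d=E\oplus E^\perp$ of a contraction into a unitary and a strictly contractive part. Everything else—the conjugation identity, Schur triangularization, and the coordinate computation for $v$—is routine once this orthogonal reducing splitting is in hand.
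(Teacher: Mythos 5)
Your proposal is correct, and it reaches the required block form by a genuinely different middle step than the paper. The paper performs a single Schur decomposition $A=PSP^*$ with the modulus-one eigenvalues placed last, and then obtains the splitting $S=\begin{pmatrix} T&0\\ 0&U\end{pmatrix}$ from the rigidity of triangular contractions: every column of the contraction $S$ (and of $S^*$) has norm at most $1$, so any row or column containing a unimodular diagonal entry must vanish off the diagonal, which simultaneously forces the block-diagonal structure and the diagonality of $U$. You instead build the orthogonal reducing decomposition $\C^d=E^\perp\oplus E$ first, where $E$ is the span of the eigenvectors with unimodular eigenvalues (the unitary part in the canonical decomposition of a contraction into unitary and completely non-unitary pieces), by proving directly that such eigenvectors are eigenvectors of $A^*$, that no Jordan blocks occur at unimodular eigenvalues, and that eigenvectors for distinct unimodular eigenvalues are orthogonal; only then do you Schur-triangularize the strictly contractive part $A|_{E^\perp}$. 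Your route costs three small spectral lemmas (all correctly sketched: the first from $I-A^*A\geq 0$, the second from $\|A^n\|\leq 1$, the third by computing $\langle Aw_1,w_2\rangle$ two ways), and the no-Jordan-block step is what guarantees that the diagonal of $T$ accounts for all eigenvalues of modulus less than $1$; in exchange, the invariant-subspace structure is made explicit and independent of any ordering choice, whereas the paper's argument is shorter but leaves the rigidity observation unstated. The two treatments of $v$ are equivalent: you apply the Carswell--MacCluer--Schuster criterion to the original pair $(A,b)$ at the unimodular eigenvectors, while the paper applies it to the conjugated symbol $(S,P^*b)$.
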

\begin{proof}
	Let $P,S\in \C^{d\times d}$ be an orthogonal matrix and an upper triangular matrix obtained by the Schur decomposition of $A$, that is, $A= PSP^*$.
	Further, assume that in the first $p$ entries of the diagonal of $S$ we find all the eigenvalues of $A$ of modulus strictly lower than $1$. 
	
	Since ${\|P\|=\|P^*\|=1}$ and $P^{-1}=P^*$, we have that $C_{Pz},C_{P^*z}$ are invertible elements of $\mathcal{L}(\mathcal{F}(\C^d))$, with $C_{Pz}C_{P^*z}=Id$, and that
	
	\[ C_{Sz + P^*b}= C_{Pz}C_\varphi C_{P^*z}\in \mathcal{L}(\mathcal{F}(\C^d)).\] 
	
	Now, noticing that $S$ is an upper triangular matrix, $\|S\|\leq 1$ and the last $d-p$ entries of its diagonal have modulus equal to one, we get
	\[
	S= \begin{pmatrix} T& 0  \\ 
	0 & U \\  
	\end{pmatrix}
	\]
	where $T\in\C^{p\times p}$ is an upper triangular matrix such that its diagonal contains all the eigenvalues of $A$ of modulus lower than $1$ and $U\in\C^{(d-p)\times (d-p)}$ is a unitary diagonal matrix.
	Finally, since $C_{Sz + P^*b}$ is bounded, $v:= P^*b \in \C^p\times\{0\}^{d-p}$.
	
\end{proof}

\begin{lemma}\label{lem:range}
Let $\varphi(z):=Az+b$ be such that $C_\varphi$ induces a bounded composition
operator on $\mathcal F(\C^d)$. Then $b\in\textrm{Ran}(I-A)$. 
\end{lemma}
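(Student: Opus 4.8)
The plan is to pass to orthogonal complements and reduce the claim to a statement about fixed vectors of $A^*$. Since we work in a finite-dimensional Hilbert space, $\textrm{Ran}(I-A)$ is closed, and one has the standard identification
\[
\textrm{Ran}(I-A)^\perp = \ker\big((I-A)^*\big) = \ker(I-A^*).
\]
Hence $b\in\textrm{Ran}(I-A)$ is equivalent to $\langle b,w\rangle=0$ for every $w\in\C^d$ with $A^*w=w$. Using the convention $\langle z,w\rangle=\sum_j z_j\overline{w_j}$, proving the lemma amounts to showing that $b$ is orthogonal to every fixed vector of $A^*$.

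The step I would isolate as the engine of the proof is the classical fact that a linear contraction and its adjoint share the same fixed vectors: if $\|A\|\leq 1$ and $A^*w=w$, then $Aw=w$. I would establish this by the short computation
\[
\|w-Aw\|^2 = \|w\|^2 - 2\,\mathrm{Re}\langle Aw,w\rangle + \|Aw\|^2 = \|Aw\|^2-\|w\|^2\leq 0,
\]
where the middle equality uses $\langle Aw,w\rangle=\langle w,A^*w\rangle=\|w\|^2$ and the final inequality uses $\|Aw\|\leq\|A\|\,\|w\|\leq\|w\|$. Thus $\|w-Aw\|^2\leq 0$, forcing $Aw=w$.

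With this in hand the conclusion is immediate. Fix any $w$ with $A^*w=w$; by the previous step $Aw=w$, and in particular $|Aw|=|w|$. The boundedness criterion of Carswell et al.\ \cite{CMS} guarantees that $\langle Av,b\rangle=0$ for every $v\in\C^d$ satisfying $|Av|=|v|$; applying it with $v=w$ yields $\langle w,b\rangle=0$, and hence $\langle b,w\rangle=\overline{\langle w,b\rangle}=0$. Since $w$ was an arbitrary fixed vector of $A^*$, we obtain $b\perp\ker(I-A^*)=\textrm{Ran}(I-A)^\perp$, that is, $b\in\textrm{Ran}(I-A)$.

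I do not anticipate a serious obstacle here: the argument is essentially a two-line reduction once the correct orthogonal-complement identity is written down. The only points demanding care are the inner-product convention (so that the orthogonality condition $\langle Aw,b\rangle=0$ is transcribed correctly) and the fixed-vector lemma for contractions, which is precisely what converts the hypothesis $A^*w=w$ into the norm-preservation condition $|Aw|=|w|$ required to invoke the boundedness characterization.
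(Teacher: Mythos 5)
Your proof is correct, but it takes a different route from the paper's: the paper disposes of the lemma in a single line by citing \cite[Lemma 5.2]{JPZ} for the fact that $b\in\ker(I-A^*)^\perp$, and then applies the same finite-dimensional duality $\ker(I-A^*)^\perp=\textrm{Ran}(I-A)$ that you invoke. What you have done, in effect, is reprove the content of that cited lemma from scratch. Your two ingredients --- the fact that a contraction and its adjoint share fixed vectors (via the computation $\|w-Aw\|^2=\|Aw\|^2-\|w\|^2\le 0$), and the Carswell--MacCluer--Schuster boundedness criterion $\langle Av,b\rangle=0$ whenever $|Av|=|v|$ --- combine exactly to show $b\perp\ker(I-A^*)$. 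The trade-off is clear: the paper's proof is shorter but outsources the substance to a reference, whereas yours makes the lemma self-contained modulo the characterization of boundedness from \cite{CMS}, which the paper already quotes in its introduction. Both arguments hinge on the same orthogonal-complement identity, and your handling of the inner-product convention and of the fixed-vector step is accurate, so there is no gap.
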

\begin{proof}
By \cite[Lemma 5.2]{JPZ}, $b\in \ker(I-A^*)^\perp=\textrm{Ran}(I-A)$.
\end{proof}

For the sake of completeness, we state the following two results which are taken from \cite{BS} and \cite{CMS} respectively.

\begin{proposition}\cite[Proposition 2.7]{BS}\label{noncyclicity adjoint}
	Let $H$ be a Hilbert space and let $T\in \mathcal{L}(H)$. If there is $\lambda\in\C$ such that $\textup{dim}(\ker(T^*-\lambda I))\geq 2$, then $T$ is not cyclic.
\end{proposition}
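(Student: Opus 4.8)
The plan is to argue by contradiction. Suppose that $T$ is cyclic with cyclic vector $x$, so that $\textup{span}(\textup{orb}(T,x))$ is dense in $H$. The hypothesis supplies two linearly independent vectors $e_1,e_2\in\ker(T^*-\lambda I)$, that is, $T^*e_1=\lambda e_1$ and $T^*e_2=\lambda e_2$. The idea is to use these adjoint eigenvectors to test the orbit of $x$: when paired against $e_1$ and $e_2$, the orbit can only spread along a single direction of $\C^2$, which will turn out to be incompatible with density.

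Concretely, for each $n\geq 0$ and $i\in\{1,2\}$ I would compute
\[\langle T^n x, e_i\rangle = \langle x, (T^*)^n e_i\rangle = \langle x, \lambda^n e_i\rangle = \overline{\lambda}^{\,n}\langle x, e_i\rangle.\]
Introduce the bounded linear map $\Phi:H\to\C^2$ given by $\Phi(y)=(\langle y,e_1\rangle,\langle y,e_2\rangle)$. The computation above shows that $\Phi(T^n x)=\overline{\lambda}^{\,n}\Phi(x)$ for every $n$, so $\Phi$ maps every element of $\textup{orb}(T,x)$ into the line $\C\cdot\Phi(x)$. By linearity, $\Phi(\textup{span}(\textup{orb}(T,x)))$ is contained in the one-dimensional subspace $V:=\C\cdot\Phi(x)$ of $\C^2$.

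To reach a contradiction I would use two facts. First, $\Phi$ is surjective onto $\C^2$: otherwise its range would lie in a proper subspace, yielding a nonzero $(a,b)\in\C^2$ with $a\langle y,e_1\rangle+b\langle y,e_2\rangle=0$ for all $y\in H$, i.e. $\langle y,\overline{a}e_1+\overline{b}e_2\rangle=0$ for all $y$, which forces $\overline{a}e_1+\overline{b}e_2=0$ and contradicts the linear independence of $e_1,e_2$. Second, $\Phi$ is continuous, so density of $\textup{span}(\textup{orb}(T,x))$ gives
\[\C^2=\Phi(H)\subseteq\overline{\Phi(\textup{span}(\textup{orb}(T,x)))}\subseteq\overline{V}=V,\]
which is absurd since $\dim V\leq 1$. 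Hence $T$ admits no cyclic vector.

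I do not expect a serious obstacle here; the only points requiring care are the conjugation that appears when moving $\lambda^n$ across the inner product and the verification that $\Phi$ has rank exactly $2$, both of which are routine. The underlying mechanism — that the number of linearly independent adjoint eigenvectors sharing one eigenvalue bounds the dimension the span of an orbit can reach in those directions — is precisely what obstructs cyclicity.
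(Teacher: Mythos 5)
Your proof is correct. Note that the paper does not prove this proposition at all: it is quoted from Bourdon--Shapiro \cite[Proposition 2.7]{BS}, so there is no in-paper argument to compare against. Your argument is a valid, self-contained proof of the cited fact, and every step checks out: the identity $\langle T^n x, e_i\rangle=\overline{\lambda}^{\,n}\langle x,e_i\rangle$ is right (the conjugate appears exactly where you put it), the surjectivity of $\Phi$ follows from the linear independence of $e_1,e_2$ as you say, and the density-plus-continuity step is sound because the line $V=\C\cdot\Phi(x)$ is closed in $\C^2$.

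For comparison, the same mechanism is usually packaged slightly more economically: since $\dim\ker(T^*-\lambda I)\geq 2$, the linear functional $e\mapsto\langle x,e\rangle$ on that kernel has nontrivial kernel, so one can pick $e\neq 0$ with $T^*e=\lambda e$ and $\langle x,e\rangle=0$; then $\langle T^n x,e\rangle=\overline{\lambda}^{\,n}\langle x,e\rangle=0$ for all $n$, so $\overline{\textup{span}}(\textup{orb}(T,x))\subseteq e^\perp\neq H$, contradicting cyclicity of $x$. This avoids introducing $\Phi$ and checking its surjectivity, but it is the same idea as yours: adjoint eigenvectors sharing an eigenvalue collapse the orbit, under testing, onto a one-dimensional set, while density would require them to see two independent directions. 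Either version would serve as a proof of the proposition.
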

\begin{proposition}\cite[Lemma 2]{CMS}\label{adjoint operator}
	Let $\varphi(z):=Az+b$ be such that $C_\varphi$ induces a bounded composition operator on $\mathcal{F}(\C^d)$. 
	Then $C_\varphi^*=W_{k_b,\widehat{\varphi}}$, that is, the weighted composition operator with symbols $k_b$ and $\widehat{\varphi}(z):= A^*z$.
\end{proposition}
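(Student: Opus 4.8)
The plan is to exploit the reproducing kernel structure of $\F(\C^d)$ and to identify $C_\varphi^*$ by computing it \emph{pointwise}, that is, by evaluating $(C_\varphi^* f)(z)$ for an arbitrary $f\in\F(\C^d)$ and $z\in\C^d$ through the reproducing property $g(z)=\langle g,k_z\rangle$. Since $C_\varphi$ is bounded, so is $C_\varphi^*$, and hence $C_\varphi^* f\in\F(\C^d)$; so if I can show that $(C_\varphi^* f)(z)$ equals $k_b(z)f(A^*z)$ for every $z$, then the identity $C_\varphi^*=W_{k_b,\widehat{\varphi}}$ follows at once, with no separate verification that the right-hand side lands in $\F(\C^d)$.

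First I would write, using the reproducing property and $(C_\varphi^*)^*=C_\varphi$,
\[(C_\varphi^* f)(z)=\langle C_\varphi^* f,k_z\rangle=\langle f,C_\varphi k_z\rangle.\]
The next step is to compute the single function $C_\varphi k_z=k_z\circ\varphi$ explicitly. Evaluating at $w$, using $\varphi(w)=Aw+b$ together with the defining relation of the adjoint $\langle Aw,z\rangle=\langle w,A^*z\rangle$, one gets
\[(k_z\circ\varphi)(w)=\exp\Big(\frac{\langle Aw+b,z\rangle}{2}\Big)=\exp\Big(\frac{\langle w,A^*z\rangle}{2}\Big)\exp\Big(\frac{\langle b,z\rangle}{2}\Big)=\overline{k_b(z)}\,k_{A^*z}(w),\]
the point being that the factor depending on $b$ is a constant in $w$ and equals $\overline{k_b(z)}$ because $\langle b,z\rangle=\overline{\langle z,b\rangle}$.

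It then remains to insert this into the inner product, being careful with the conjugate-linearity of $\langle\cdot,\cdot\rangle$ in its second slot: pulling out the scalar $\overline{k_b(z)}$ turns it into $k_b(z)$, so that
\[\langle f,C_\varphi k_z\rangle=k_b(z)\,\langle f,k_{A^*z}\rangle=k_b(z)\,f(A^*z)=k_b(z)\,f(\widehat{\varphi}(z)),\]
using the reproducing property once more. This is exactly $(W_{k_b,\widehat{\varphi}}f)(z)$, which finishes the proof. There is no genuine obstacle here beyond the bookkeeping of conjugations inherent to a sesquilinear inner product; the only things to keep straight are which argument of $\langle\cdot,\cdot\rangle$ is conjugate-linear and the adjoint identity $\langle Aw,z\rangle=\langle w,A^*z\rangle$, both of which are routine.
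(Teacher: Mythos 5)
Your proof is correct: the reproducing-kernel computation $(C_\varphi^* f)(z)=\langle f,C_\varphi k_z\rangle$, the identity $C_\varphi k_z=\overline{k_b(z)}\,k_{A^*z}$, and the handling of the conjugations are all accurate, and the pointwise identification does suffice since $C_\varphi^* f\in\F(\C^d)$ automatically. The paper itself states this proposition as a citation of \cite[Lemma 2]{CMS} without reproving it, and your argument is precisely the standard proof of that lemma.
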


Albeit simple, the following proposition will help us in the forthcoming computations. 
We recall that the symbol $\varphi$ of a bounded composition operator $C_\varphi$ on $\F(\C^d)$ always has a fixed point, \cite[Lemma 5.2]{JPZ}.

\begin{proposition}\label{polynomials of deg 1}
Let $\varphi(z):=Az+b$ be such that $C_\varphi$ induces a bounded composition operator on $\mathcal{F}(\C^d)$. 
Let $\lambda=(\lambda_{j})_{j=1}^d$ be the eigenvalues of $A$ repeated by algebraic multiplicity .
%and let $(v_i)_i\subset \C^d$ be generalized eigenvectors of $A$ associated to $\lambda$.
%Let $\xi\in \C^d$ be a fixed point of $\varphi$.  
%Then, there is $L=(L_j)_{j=1}^d\subset \textup{span}(z_k-\xi_k :~k=1,...,d)$ a linearly independent finite sequence of polynomials of degree $1$ such that
Let $(v_j)_{j=1}^d\subset (\C^d)^*$ be a basis of generalized eigenvectors
of $A^T$ associated to $\lambda$ such that, for all $j=1,\dots,d$, 
either $A^T v_j=\lambda_j v_j$ or $A^T v_j=\lambda v_j+v_{j-1}$. Let $\xi\in\C^d$ 
be a fixed point of $\varphi$. Then there is $L=(L_j)_{j=1}^d$ a basis of 
$\textrm{span}(z_k-\xi_k:\ k=1,\dots,d)$ such that 
\begin{align*}
A^T v_j=\lambda_jv_j ~(\text{or }= \lambda_jv_j+v_{j-1}) \Rightarrow C_\varphi L_j = \lambda_j L_j~ (\text{resp. }=\lambda_j L_j+L_{j-1}),
\end{align*} 
for all $j=1,\dots,d$.
\end{proposition}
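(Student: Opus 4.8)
The plan is to build the basis explicitly by shifting the coordinate functions so that they vanish at the fixed point $\xi$, thereby killing the affine constant term produced by $\varphi$. Concretely, for a column vector $w\in\C^d$ (identified with an element of $(\C^d)^*$) I would set $L_w(z):=w^T(z-\xi)$, and then define $L_j:=L_{v_j}$ for $j=1,\dots,d$. Since $(v_j)_{j=1}^d$ is a basis of $\C^d$ and $w\mapsto L_w$ is linear with trivial kernel (if $w^T(z-\xi)\equiv 0$ then $w=0$), its image is exactly $\textrm{span}(z_k-\xi_k:\ k=1,\dots,d)$ and $(L_j)_{j=1}^d$ is a basis of this space.

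The key computation uses the fixed point relation. Because $\xi$ is a fixed point of $\varphi$, we have $A\xi+b=\xi$, hence
\[
\varphi(z)-\xi=Az+b-\xi=A(z-\xi).
\]
Therefore, for any $w\in\C^d$,
\[
C_\varphi L_w(z)=L_w(\varphi(z))=w^T\bigl(\varphi(z)-\xi\bigr)=w^T A(z-\xi)=(A^T w)^T(z-\xi)=L_{A^T w}(z),
\]
so that $C_\varphi L_w=L_{A^T w}$ and the map $w\mapsto L_w$ intertwines the action of $A^T$ on $\C^d$ with the action of $C_\varphi$ on $\textrm{span}(z_k-\xi_k:\ k=1,\dots,d)$.

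It then remains only to feed the Jordan relations through this intertwining. Using the linearity of $w\mapsto L_w$, when $A^T v_j=\lambda_j v_j$ we get $C_\varphi L_j=L_{\lambda_j v_j}=\lambda_j L_j$, and when $A^T v_j=\lambda_j v_j+v_{j-1}$ we get $C_\varphi L_j=L_{\lambda_j v_j+v_{j-1}}=\lambda_j L_j+L_{j-1}$, which is exactly the desired conclusion.

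I expect no serious obstacle here: the whole proposition reduces to the one-line identity $C_\varphi L_w=L_{A^T w}$. The only point requiring care is bookkeeping with dualities — identifying $(\C^d)^*$ with column vectors so that the pullback of a linear functional under $\varphi$ is governed by $A^T$ (rather than $A$ or $A^*$), and checking that the shift by $\xi$ is precisely what removes the constant term $w^T b$ that would otherwise appear.
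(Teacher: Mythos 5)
Your proposal is correct and follows essentially the same route as the paper: the paper also views each $v_j$ as a linear form, uses the fixed-point identity $\varphi(z)=A(z-\xi)+\xi$ to get the intertwining $C_\varphi\bigl(v_j(\cdot-\xi)\bigr)=(A^Tv_j)(\cdot-\xi)$, and then reads off the Jordan relations. Your write-up merely makes the linearity and injectivity of $w\mapsto L_w$ (hence the basis property) explicit, which the paper leaves implicit.
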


\begin{proof}
Observe that each $v_j$, as a linear form on $\C^d$, can also be seen as an element
of $\mathcal F(\C^d)$ and that $C_{Az}(v_j)=A^T v_j$. Now, noticing that
$\varphi(z)=A(z-\xi)+\xi$, we get that the polynomial $L_j:=v_j(\cdot-\xi_j)$ satisfies
$C_\varphi L_j\in\{\lambda_j L_j,\lambda_j L_j+L_{j-1}\}$ and that 
$(L_j)_{j=1}^d$ is a basis of $\textrm{span}(z_k-\xi_k:\ k=1,\dots,d)$.
%Let us denote by $A^T$ the transpose of $A$. 
%Let $(w_j)_{j=1}^d\subset \C^d$ be a sequence of generalized eigenvectors of $A^T$ associated to $\lambda$. Thus, 
%\[A^T w_j \in \{\lambda_j w_j,~ \lambda_jw_j+w_{j+1}\}. \]
%For $j\in\{1,...,d\}$, we define $L_j= \sum_{k=1}^d w_{jk}z_k\in \F(\C^d)$. 
%It easily follows that 
%
%\[C_{Az}L_j\in \{\lambda_j L_j,~ \lambda_jL_j+L_{j+1}\},~\text{for all }j=1,...,d. \]
%Now, noticing that $\varphi(z)=A(z-\xi)+\xi$, we get that the polynomial $L_j:= L_j(\cdot-\xi)$ satisfies 
%\[C_{\varphi}L_j\in \{\lambda_j L_j,~ \lambda_j L_j+L_{j+1}\},~\text{for all }j=1,...,d. \]
%Since the canonical Jordan form of $A$ and $A^T$ coincide, the proof is complete.
\end{proof}
\begin{remark}
Observe that $\{L^\alpha:\ \alpha\in\mathbb N^d\}$ is a basis of the space of all polynomials in $d$ variables.
\end{remark}

We will also need the following combinatorial lemma (the partial order of $\N^p$ that we consider has been defined at the end of the introduction).

\begin{lemma}\label{lem:combinatorial}
Let $p\geq 1$ and $E\subset\mathbb N^p$. There exists a finite partition $\{D_i: i\in I\}$ of $E$ such that, for all $i\in I$, there exists $\alpha(i)\in D_i$ satisfying $\alpha\geq \alpha(i)$ for all $\alpha\in D_i$. 
\end{lemma}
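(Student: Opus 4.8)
The plan is to argue by induction on the dimension $p$. The base case $p=1$ is immediate: an empty $E$ needs no block, while any nonempty $E\subseteq\N$ has a least element by the well-ordering of $\N$, so the single block $D=E$ (with $\alpha=\min E$) already works.

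For the inductive step I would assume the statement in dimension $p-1$ and take $E\subseteq\N^p$ nonempty. First, fix any element $\beta\in E$ and set
\[
D:=\{\alpha\in E:\ \alpha\geq\beta\}.
\]
By construction $\beta\in D$ and $\beta\leq\alpha$ for every $\alpha\in D$, so $D$ is a legitimate block with least element $\beta$. It remains to partition $E\setminus D$. The key observation is that $\alpha\not\geq\beta$ forces $\alpha_i<\beta_i$ for at least one coordinate $i$, hence
\[
E\setminus D\ \subseteq\ \bigcup_{i=1}^{p}\ \bigcup_{s=0}^{\beta_i-1}\ H_{i,s},\qquad H_{i,s}:=\{\alpha\in\N^p:\ \alpha_i=s\},
\]
which is a \emph{finite} union of affine hyperplanes. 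Each $H_{i,s}$, with the induced order, is order-isomorphic to $\N^{p-1}$ via the map deleting the $i$-th coordinate, so the induction hypothesis applies to any of its subsets and least elements are preserved under the isomorphism.

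Next I would enumerate these hyperplanes as $H_1,\dots,H_M$ and disjointify them, setting
\[
F_j:=\left((E\setminus D)\cap H_j\right)\setminus\left(H_1\cup\cdots\cup H_{j-1}\right),\qquad j=1,\dots,M,
\]
so that the $F_j$ are pairwise disjoint and $\bigsqcup_{j}F_j=E\setminus D$. Since each $F_j$ is a subset of $H_j\cong\N^{p-1}$, the induction hypothesis yields a finite partition of $F_j$ into blocks, each having a least element; transporting them back through the order-isomorphism keeps those least elements. Collecting $D$ with all the blocks obtained for the finitely many $F_j$ produces the desired finite partition of $E$.

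The only genuinely delicate point is the passage from a cover to a partition. Partitioning $E$ according to \emph{which minimal elements lie below} a given $\alpha$ (minimal elements being finite by Dickson's lemma) does not work, because a block of that form may fail to have a least element: for instance $\{(2,1),(1,2)\}$ is an antichain with no minimum. The device that makes the argument close is therefore the reduction of the complement $E\setminus D$ to finitely many copies of $\N^{p-1}$ followed by the disjointification step, which simultaneously lowers the dimension and keeps the blocks disjoint while guaranteeing each has a least element.
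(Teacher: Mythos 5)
Your proof is correct and follows essentially the same strategy as the paper's: fix $\beta\in E$, keep the block $\{\alpha\in E:\ \alpha\geq\beta\}$, cover the remainder by the finitely many slices on which some coordinate is frozen at a value below $\beta_i$, disjointify, and apply the induction hypothesis in dimension $p-1$ on each slice, the frozen coordinate guaranteeing that a least element there is a least element in $\N^p$. The only cosmetic difference is order of operations — the paper disjointifies by coordinate first and then splits by the value of that coordinate, whereas you cover by hyperplanes first and disjointify afterwards.
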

\begin{proof}
We shall proceed by induction on $p$, the case $p=1$ being trivial. Let $p\geq 2$ and assume that the result has been proven up to $p-1$. Consider any $\beta\in E$ and split $E$ into the finite partition $E_0,\dots,E_p$ with
\begin{align*}
E_0&=\{\alpha\in E:\ \alpha\geq \beta\}\\
E_j&=\{\alpha\in E:\ \alpha_j<\beta_j\}\backslash(E_1\cup\dots\cup E_{j-1}),\ j=1,\dots,p.
\end{align*}
For each $j=1,\dots,p$, we can decompose $E_j$ into the finite partition $E_{j,0},\cdots,E_{j,\beta_{j}-1}$ where 
$$E_{j,k}=\{\alpha\in E_j:\ \alpha_j=k\}.$$
Since one coordinate of each element of $E_{j,k}$ is fixed, one can apply the induction
hypothesis to $E_{j,k}$ to find a finite partition $\{D_{j,k,i}:\ i\in I_{j,k}\}$ of $E_{j,k}$
such that, for all $(j,k,i)$, there exists $\alpha(j,k,i)\in D_{j,k,i}$ satisfying 
\begin{equation}\label{eq:combinatorial}
\forall \alpha\in D_{j,k,i},\ \alpha_l\geq\alpha(j,k,i)_l\textrm{ for }l\in\{1,\dots,p\}\backslash \{j\}.
\end{equation}
Now since for $\alpha\in D_{j,k}$, $\alpha_j=\alpha(j,k,i)_j=k$, \eqref{eq:combinatorial} is true for all $l=1,\dots,p$, namely $\alpha\geq\alpha(j,k,i)$ for all $\alpha\in D_{j,k,i}$. Therefore, $E_0\cup\{D_{j,k,i}:\ j=1,\dots,p,\ k=0,\dots,\beta_j-1,\ i\in I(j,k)\}$ is the partition we are looking for. 
\end{proof}

We will finally require the invertibility of a Vandermonde-like matrix.

\begin{lemma}\label{invertible matrix}
	Let $N\geq 1$. Let $\{\alpha(n):~n=1,...,N\}\subset \Z^d$, where $\alpha(n)=\alpha(m)$ only if $n=m$. 
	Then, there is $\{w(n):~n=1,...,N\}\subset \T^d$ such that the matrix $(w(i)^{\alpha(j)} )_{i,j=1,\dots,N}\in \C^{N\times N}$ is invertible.
\end{lemma}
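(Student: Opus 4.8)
The plan is to reduce the statement to the classical linear independence of the characters $w\mapsto w^\alpha$ on the torus $\T^d$. Consider the map $\Phi:\T^d\to\C^N$ defined by $\Phi(w)=(w^{\alpha(1)},\dots,w^{\alpha(N)})$. The rows of the candidate matrix $(w(i)^{\alpha(j)})_{i,j=1,\dots,N}$ are exactly the vectors $\Phi(w(1)),\dots,\Phi(w(N))$; hence this matrix is invertible as soon as we can choose $w(1),\dots,w(N)\in\T^d$ for which these vectors form a basis of $\C^N$. Such a choice exists precisely when the image $\Phi(\T^d)$ spans $\C^N$, since in that case one may simply extract $N$ linearly independent vectors from the image.

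It therefore suffices to prove that $\Phi(\T^d)$ is not contained in any proper linear subspace of $\C^N$. Suppose, for contradiction, that there is a nonzero vector $(c_1,\dots,c_N)\in\C^N$ annihilating the image, that is $\sum_{j=1}^N c_j w^{\alpha(j)}=0$ for every $w\in\T^d$. Writing $w_k=e^{it_k}$ with $t_k\in[0,2\pi)$, each monomial becomes the character $w^{\alpha(j)}=\exp\big(i\sum_{k=1}^d \alpha(j)_k t_k\big)$, which is well defined for $\alpha(j)\in\Z^d$ precisely because every $w_k$ has modulus one.

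The key (and essentially only) step is the orthogonality of distinct characters. Integrating the relation $\sum_j c_j w^{\alpha(j)}=0$ against $\overline{w^{\alpha(m)}}$ over $[0,2\pi)^d$ with respect to normalized Lebesgue measure and using
\[
\frac1{(2\pi)^d}\int_{[0,2\pi)^d} \exp\Big(i\sum_{k=1}^d\big(\alpha(j)_k-\alpha(m)_k\big)t_k\Big)\,dt=\prod_{k=1}^d\delta_{\alpha(j)_k,\,\alpha(m)_k},
\]
which equals $1$ when $\alpha(j)=\alpha(m)$ and $0$ otherwise, we obtain $c_m=0$. Since the multi-indices $\alpha(1),\dots,\alpha(N)$ are pairwise distinct by hypothesis, this holds for every $m$, so $(c_1,\dots,c_N)=0$, a contradiction.

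Consequently $\Phi(\T^d)$ spans $\C^N$, and extracting a basis from the image produces the required points $w(1),\dots,w(N)$; distinctness of the $w(i)$ is then automatic, since any repetition would make two rows of the matrix coincide. I do not expect any genuine obstacle beyond invoking the linear independence of torus characters; the only point requiring a little care is that negative exponents are allowed, which is exactly why the evaluation must be carried out on $\T^d$ rather than on all of $\C^d$.
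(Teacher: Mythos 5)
Your proof is correct. It takes a different route from the paper's: the paper proceeds by induction on $N$, adding one point at a time; having chosen $w(1),\dots,w(N)$ so that $M=(w(i)^{\alpha(j)})_{i,j\le N}$ is invertible, it views the determinant of the $(N+1)\times(N+1)$ matrix as a function $f(z)$ of the last point $z\in\T^d$, expands along the last row, and observes that $f$ is a trigonometric polynomial whose coefficient on $z^{\alpha(N+1)}$ is $\pm\det(M)\neq 0$, hence $f$ is not identically zero and one can pick $z$ with $f(z)\neq 0$. You instead argue globally and without induction: the evaluation map $\Phi(w)=(w^{\alpha(1)},\dots,w^{\alpha(N)})$ has image spanning $\C^N$, because any annihilating functional $\sum_j c_j w^{\alpha(j)}\equiv 0$ dies upon integration against $\overline{w^{\alpha(m)}}$ by orthogonality of distinct characters, and a spanning set contains a basis. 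Both arguments rest on the same underlying fact --- linear independence of the distinct characters $w\mapsto w^{\alpha(j)}$ on $\T^d$ --- but the paper invokes it qualitatively (a trigonometric polynomial with a nonzero coefficient cannot vanish identically), while you re-derive it quantitatively via the $L^2(\T^d)$ inner product. Your version is shorter and avoids the determinant bookkeeping; the paper's inductive formulation constructs the points greedily, which is marginally more explicit but buys nothing essential here.
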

\begin{proof}
	Let us proceed by induction on $N$. If $N=1$, the result is clear
	and we assume that Lemma~\ref{invertible matrix} holds true for some $N\geq 1$. 
	Let us choose $\{w(i):~i=1,...,N\}\subset \T^d$ such that the matrix $M:=(w(i)^{\alpha(j)} )_{i,j=1,\dots,N}\in \C^{N\times N}$ is invertible. 
	Therefore, $\textup{det}(M)\neq 0$.
	Now, let us consider the function 
	\[z\in \T^d \mapsto f(z):=\det \big({(w(i)^{\alpha(j)})}_{i,j=1,\dots,N+1}\big),~ \]
	where $w(N+1)=z$. Developing the determinant that defines $f(z)$ using the last row,  thanks to the induction hypothesis and the fact that $\alpha(N+1)\neq \alpha(n)$ for $n=1,\dots,N$, we get that $f$ is a trigonometric polynomial with at least one non-zero coefficient. 
	Therefore, there is $z\in \T^d$ such that $f(z)\neq 0$.
\end{proof}

\section{Cyclic composition operators}\label{section 3}
In order to provide the proof of Theorem~\ref{Theorem 1}, we need the following auxiliary results.
\begin{proposition}\label{computation for induction}
	Let $C_\varphi$ be a bounded composition operator on $\F(\C^d)$.
	Let $p\geq 1$ and $\lambda\in \D^{p}$, where $\lambda_{p-1}=\lambda_p$. 
	Assume that there is $L:=(L_i)_{i=1}^p\subset\F(\C^d)$  a finite sequence of polynomials such that $C_\varphi L_i=\lambda_iL_i$ for all $i=1,...,p-1$ and $C_\varphi L_p= \lambda_{p-1}L_p+L_{p-1}$. Then, there is $J\in \N$ such that for any $j\geq J$, for any $n\in\N$ and for any $D\subset \{\alpha\in\N^p:~|\alpha|=n\}$, we have 
	
	\[C_\varphi^j \left(\sum_{\substack{\alpha\in D}} L^\alpha\right)=\sum_{\substack{\alpha\in \N^p\\ |\alpha|=n}}  c(\alpha,D,j) L^\alpha, ~\text{for all }i=1,...,p\]
	where $|c(\alpha,D,j)|\leq 1$.	
	
\end{proposition}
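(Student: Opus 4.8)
The plan is to exploit that $C_\varphi$ is multiplicative: since $C_\varphi(fg)=(f\circ\varphi)(g\circ\varphi)=C_\varphi(f)\,C_\varphi(g)$, we have $C_\varphi^j(L^\alpha)=\prod_{i=1}^p(C_\varphi^j L_i)^{\alpha_i}$, so everything reduces to understanding the action of $C_\varphi^j$ on each generator $L_i$. First I would record that $C_\varphi^j L_i=\lambda_i^j L_i$ for $i\le p-1$, while the pair $(L_{p-1},L_p)$ forms a Jordan block with eigenvalue $\lambda_p$ (recall $\lambda_{p-1}=\lambda_p$), so that an immediate induction on $j$ yields $C_\varphi^j L_p=\lambda_p^j L_p+j\lambda_p^{j-1}L_{p-1}$.

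Next I would expand. Plugging these into the product and applying the binomial theorem to $(\lambda_p^jL_p+j\lambda_p^{j-1}L_{p-1})^{\alpha_p}$, one obtains $C_\varphi^j(L^\alpha)$ as a linear combination of monomials $L^\beta$ with $|\beta|=|\alpha|=n$, where $\beta$ is obtained from $\alpha$ by transferring $k$ units from the $p$-th coordinate to the $(p-1)$-th, $k=0,\dots,\alpha_p$. The resulting coefficient of $L^\beta$ has modulus $\binom{\beta_p+k}{k}\,j^k|\lambda_p|^{-k}|\lambda^\beta|^j$; the decisive simplification here is that, because $\lambda_{p-1}=\lambda_p$, the modulus of the eigenvalue product is independent of $k$ and equals exactly $|\lambda^\beta|^j$. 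Summing over those $\alpha\in D$ that feed into a fixed $\beta$, the coefficient $c(\beta,D,j)$ is a sum of at most $\beta_{p-1}+1\le n+1$ such terms.

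Finally I would estimate uniformly. Writing $r:=\max_i|\lambda_i|<1$ and $s:=|\lambda_p|^{-1}>1$, and using $|\lambda^\beta|\le r^n$, $\binom{\beta_p+k}{k}\le 4^n$, $j^ks^k\le (js)^n$ together with the bound on the number of terms, one gets
\[|c(\beta,D,j)|\le (n+1)\,(4js\,r^j)^n.\]
Since $r<1$ forces $jr^j\to 0$, there is $J$ with $4js\,r^j\le 1/2$ for every $j\ge J$, and then $|c(\beta,D,j)|\le (n+1)2^{-n}\le 1$ for all $n\ge 0$, which is precisely the desired bound.

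The main obstacle is obtaining this estimate uniformly in $n$: the binomial coefficients and the factor $j^k$ produce contributions growing like $4^n$ and $(js)^n$, and the number of contributing terms grows with $n$ as well. The saving comes entirely from the contraction $|\lambda^\beta|^j\le r^{nj}$: because $jr^j\to 0$, the base $4js\,r^j$ can be forced below $1/2$ once $j\ge J$, so the geometric factor $2^{-n}$ absorbs both the polynomial prefactor $(n+1)$ and the exponential prefactors hidden in the $n$-dependent bounds. This is also where the hypothesis $\lambda\in\D^p$ (all eigenvalues in the open disc) is essential, and where the coincidence $\lambda_{p-1}=\lambda_p$ is used to keep the exponent of the contraction pinned to $|\lambda^\beta|$ rather than wandering with $k$.
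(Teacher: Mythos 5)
Your proof is correct and follows essentially the same route as the paper: multiplicativity of $C_\varphi$, the iterated Jordan formula $C_\varphi^j L_p=\lambda_p^j L_p+j\lambda_p^{j-1}L_{p-1}$, binomial expansion and regrouping by the target monomial $L^\beta$ (using $\lambda_{p-1}=\lambda_p$ to keep $|\lambda^{j\alpha}|=|\lambda^\beta|^j$), and a choice of $J$ uniform in $n$ and $D$ coming from $j\rho^{j}\to 0$ for $\rho<1$. The only difference is in the final bookkeeping: the paper recombines the coefficient bound via the binomial theorem into $\big(|\lambda_{p-1}|^j+j|\lambda_{p-1}|^{j-1}\big)^{\alpha_{p-1}+\alpha_p}$, whereas you use the cruder estimate $(n+1)\,(4js\,r^j)^n$ absorbed by the geometric factor, and both yield the required uniform bound.
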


\begin{proof}
Let $j\in \N$. We compute

\begin{align*}
C_\varphi^j \left(\sum_{\substack{\alpha\in D}} L^\alpha\right)&= \sum_{\substack{\alpha\in D}} \prod_{i=1}^p \big(C_\varphi^j(L_i)\big)^{\alpha_i}\\
&= \sum_{\substack{\alpha\in D}} \left(\prod_{i=1}^{p-1} \lambda_i^{\alpha_i j}L_i^{\alpha_i}\right)\big(\lambda_{p-1}^jL_p+j\lambda_{p-1}^{j-1}L_{p-1}\big)^{\alpha_p}\\
&= \sum_{\substack{\alpha\in D}} \lambda^{j\alpha} \left(\prod_{i=1}^{p-1} L_i^{\alpha_i}\right)
\sum_{\beta=0}^{\alpha_p}\begin{pmatrix}
\alpha_p \\ \beta
\end{pmatrix}L_p^\beta \left(\frac{j}{\lambda_{p-1}}\right)^{\alpha_p-\beta}L_{p-1}^{\alpha_p-\beta}\\
&=\sum_{\substack{\alpha\in \N^p\\ |\alpha|=n}} L^\alpha \left(\prod_{i=1}^{p-2}\lambda_i^{j\alpha_i}\right)\lambda_{p-1}^{j(\alpha_{p-1}+\alpha_p)}\sum_{\substack{\gamma\in \N^2\\ |\gamma|=\alpha_{p-1}+\alpha_p\\
(\alpha_1,...,\alpha_{p-2},\gamma)\in D		\\
\gamma_2\geq\alpha_p }}
\begin{pmatrix}
\gamma_2 \\\alpha_{p} 
\end{pmatrix} \left(\frac{j}{\lambda_{p-1}}\right)^{\gamma_2-\alpha_p} 
\end{align*}

Now, let us fix $\alpha\in \N^p$, with $|\alpha|=n$ and set $N:=\alpha_{p}+\alpha_{p-1}$. Observe that

\begin{align*}
\Bigg|\lambda_{p-1}^{jN}\sum_{\substack{\gamma\in \N^2,\ |\gamma|=N\\
(\alpha_1,...,\alpha_{p-2},\gamma)\in D		\\
		\gamma_2\geq\alpha_p }}
\begin{pmatrix}
\gamma_2 \\\alpha_{p} 
\end{pmatrix} \left(\frac{j}{\lambda_{p-1}}\right)^{\gamma_2-\alpha_p}  \Bigg |&\leq |\lambda_{p-1}|^{jN}\sum_{\substack{\gamma\in \N^2\\ |\gamma|=N\\
		\gamma_2\geq\alpha_p }}
\begin{pmatrix}
|\gamma| \\ \alpha_{p}+\gamma_1
\end{pmatrix} \left(\frac{j}{|\lambda_{p-1}|}\right)^{\gamma_2-\alpha_p}\\
&\leq |\lambda_{p-1}|^{jN} \left(1+\frac{j}{|\lambda_{p-1}|}\right)^{N}\\
&=(|\lambda_{p-1}|^j+ j|\lambda_{p-1}|^{j-1})^{N}.
\end{align*}
Since $\lambda_{p-1}\in\D$, there is $J\in \N$ such that  $|\lambda_{p-1}|^j+ j|\lambda_{p-1}|^{j-1}\leq 1$ for all $j\geq J$. Notice that $J$ does not depend on $\alpha$. 
\end{proof}

\begin{lemma}\label{obtaining L alpha}
	Let $0\leq p\leq d$.
	Let $(\lambda,\mu):=(\lambda_j)_{j=1}^p\times(\mu_j)_{j={p+1}}^{d}\in (\D\setminus \{0\})^p\times\T^{d-p}$.
	Let $f:\T^{d-p}\to \F(\C^d)$ be a function. 
	Let $R\in (0,1)$ and $\mcn:=\{\alpha\in\N^p:~|\lambda^\alpha|=R\}$. 
	Let $(x_\alpha)_{\alpha\in \mcn}\subset \F(\C^d)$ be a sequence of linearly independent functions such that, for each $\alpha\in \mcn$ the function $w\in \T^{d-p}\mapsto f(w)x_\alpha\in \F(\C^d)$ is well defined and continuous. 
	Assume that 
	\begin{align*}\tag{H}\label{hypothesis}
	\text{there is no }(\alpha,\beta)\in \Z^p\times\Z^{d-p}\setminus \{\{0\}^d\}~\text{such that }\lambda^\alpha\mu^\beta=1.
	\end{align*}
	Then, for any fixed $\gamma \in \mcn$, the closure of the linear space spanned by the accumulation points in $\F(\C^d)$ of the sequence
	\[  \left(f(\mu_{p+1}^n,...,\mu_{d}^n)\sum_{\alpha\in \mcn} \left(\dfrac{\lambda^{\alpha}}{\lambda^{\gamma}} \right)^n x_\alpha \right)_n \]
	contains the set $\{f(w)x_\alpha:~\alpha\in \mcn,~w\in \T^{d-p}\}$.
	
\end{lemma}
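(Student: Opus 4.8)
The plan is to realize the sequence as the image of the powers of a single torus element under a fixed continuous map, and then to determine its accumulation points from the orbit closure, whose structure is dictated by Kronecker's theorem together with hypothesis \eqref{hypothesis}. Concretely, write $\mcn=\{\alpha^{(1)},\dots,\alpha^{(M)}\}$; since $|\lambda^{\alpha}|=R=|\lambda^\gamma|$ for every $\alpha\in\mcn$, each $\zeta_k:=\lambda^{\alpha^{(k)}}/\lambda^\gamma$ lies on $\T$. Set $\zeta:=(\zeta_1,\dots,\zeta_M,\mu_{p+1},\dots,\mu_d)\in\T^{M+(d-p)}$ and define the continuous map $\Phi\colon\T^{M+(d-p)}\to\F(\C^d)$, $\Phi(u,w)=\sum_{k=1}^M u_k\bigl(f(w)x_{\alpha^{(k)}}\bigr)$ (continuous because each $w\mapsto f(w)x_{\alpha^{(k)}}$ is, by hypothesis). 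By construction the $n$-th term of the sequence equals $\Phi(\zeta^n)$, the powers being taken coordinatewise.

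Next I would identify the accumulation points. Let $H:=\overline{\{\zeta^n:\ n\geq 0\}}$; being a closed sub-semigroup of the compact group $\T^{M+(d-p)}$, it is a closed subgroup, whence $\overline{\{\zeta^n:\ n\geq N\}}=\zeta^N H=H$ for all $N$ and every point of $H$ is an accumulation point of $(\zeta^n)_n$. Continuity of $\Phi$ then guarantees that each point of $\Phi(H)$ is an accumulation point of the sequence, so the closed span $V$ from the statement contains $\overline{\operatorname{span}}\,\Phi(H)$; it therefore suffices to prove $f(w)x_\alpha\in\overline{\operatorname{span}}\,\Phi(H)$ for all $\alpha\in\mcn$ and $w\in\T^{d-p}$.

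The key computation is that of $H$. Its relation lattice $\Lambda=\{(n,m):\ \prod_k\zeta_k^{n_k}\,\mu^{m}=1\}$ is governed by the identity $\prod_k\zeta_k^{n_k}\mu^{m}=\lambda^{\sum_k n_k\alpha^{(k)}-(\sum_k n_k)\gamma}\mu^{m}$; by \eqref{hypothesis} a relation forces simultaneously $m=0$ and $\sum_k n_k\alpha^{(k)}=(\sum_k n_k)\gamma$, so $\Lambda=\Lambda_1\times\{0\}$ with $\Lambda_1:=\{n\in\Z^M:\ \sum_k n_k\alpha^{(k)}=(\sum_k n_k)\gamma\}$. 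By Kronecker's theorem $H$ is the annihilator of $\Lambda$ in $\T^{M+(d-p)}$, which thus factors as $H=H_1\times\T^{d-p}$, where $H_1\subset\T^M$ is the annihilator of $\Lambda_1$; in particular the $\mu$-coordinates decouple and fill the entire factor $\T^{d-p}$. This step—passing from \eqref{hypothesis} to the product structure of the orbit closure—is the main obstacle; everything after it is essentially a Vandermonde extraction.

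Finally, fix $w\in\T^{d-p}$ and $k_0$. Restricted to $H_1$ the coordinate maps $\chi_k\colon u\mapsto u_k$ are characters, and they are pairwise distinct: $\chi_k=\chi_{k'}$ on $H_1$ would give $e_k-e_{k'}\in\Lambda_1$, forcing (as $\sum(e_k-e_{k'})=0$) $\alpha^{(k)}=\alpha^{(k')}$. Distinct characters of the compact group $H_1$ are orthonormal for its normalized Haar measure, so integrating the continuous $V$-valued map $u\mapsto\overline{u_{k_0}}\Phi(u,w)$ (each value lies in $V$ since $(u,w)\in H$) yields
\[
\int_{H_1}\overline{u_{k_0}}\,\Phi(u,w)\,du=\sum_{k=1}^M\Bigl(\int_{H_1}\overline{u_{k_0}}u_k\,du\Bigr)\bigl(f(w)x_{\alpha^{(k)}}\bigr)=f(w)x_{\alpha^{(k_0)}}\in V.
\]
Equivalently, since distinct characters are linearly independent one may choose finitely many $u^{(i)}\in H_1$ with $(u^{(i)}_k)_{i,k}$ invertible, as in Lemma~\ref{invertible matrix}, and take a finite linear combination. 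As $w$ and $k_0$ are arbitrary, this gives the desired inclusion.
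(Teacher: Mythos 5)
Your proposal is correct, but it takes a genuinely different route from the paper's proof. The paper works in explicit coordinates: it writes $\lambda_j=e^{\rho_j}e^{i\theta_j}$, extracts a maximal $\Q$-linearly independent subfamily $\{\rho_1,\dots,\rho_q\}$, expresses the remaining exponents rationally through coefficients $a_{j,k}$, proves from \eqref{hypothesis} that $\pi$ together with the resulting modified angles is $\Q$-linearly independent, and then applies the classical Kronecker density theorem to conclude that the phase vector is dense in a \emph{full} torus $\T^{(p-q)+(d-p)}$; the individual vectors $f(w)x_\alpha$ are then separated via the Vandermonde-type Lemma~\ref{invertible matrix}, the injectivity of $\alpha\in\mcn\mapsto(\alpha_k-\gamma_k)_{k=q+1,\dots,p}$ being what makes the relevant exponents distinct. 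You instead encode the whole sequence as the orbit $(\Phi(\zeta^n))_n$ of a single point $\zeta\in\T^{M+(d-p)}$, identify the accumulation points with $\Phi(H)$ where $H=\overline{\{\zeta^n\}}$ is a closed subgroup, compute $H$ as the annihilator of the relation lattice $\Lambda$ (note this step, as well as your later claim that $\chi_k=\chi_{k'}$ on $H_1$ forces $e_k-e_{k'}\in\Lambda_1$, is really Pontryagin duality / the double-annihilator theorem — the Kronecker--Weyl theorem — rather than Kronecker's theorem in its classical density form), observe that \eqref{hypothesis} forces $\Lambda=\Lambda_1\times\{0\}$ so that $H=H_1\times\T^{d-p}$, and finally separate the vectors by Haar-measure orthogonality of the distinct characters $\chi_k$ on the compact group $H_1$. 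Your approach buys structure and generality: no choice of a $\Q$-basis, no coefficients $a_{j,k}$, no independence Claim, and no need for the orbit closure to be a full torus; it also makes transparent exactly where \eqref{hypothesis} enters (splitting of the relation lattice). The paper's approach buys elementarity: it needs only the classical Kronecker theorem and an elementary determinant lemma, with no duality theory and no vector-valued integration. Two small points you should make explicit: $\mcn$ is finite (since $|\lambda_j|<1$ for all $j$, $|\lambda^\alpha|\to 0$ as $|\alpha|\to\infty$), which you use when enumerating $\mcn=\{\alpha^{(1)},\dots,\alpha^{(M)}\}$; and the Bochner integral $\int_{H_1}\overline{u_{k_0}}\Phi(u,w)\,du$ indeed lies in the closed subspace $V$ because the integrand is continuous with values in $V$ (pair the integral with any vector orthogonal to $V$), though your alternative finite linear combination via an invertible matrix $(\chi_k(u^{(i)}))_{i,k}$ avoids integration altogether.
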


Observe that hypothesis~\eqref{hypothesis} is equivalent to: $\left\{\pi,\arg\left({\lambda^{\alpha}}\mu^{\beta}\right)\right\}$ is $\Q$-linearly independent for all $(\alpha,\beta)\in (\Z^p\times \Z^{d-p})\setminus\{\{0\}\}^d$ such that $|\lambda^\alpha|=1$.

\begin{proof}
	If $\mcn =\emptyset$, there is nothing to prove. 
	So, we assume that $\mcn\neq \emptyset$.
	First, let us write $\lambda_j= e^{\rho_j}e^{i\theta_j}$ for all $j=1,...,p$ and $\mu_j= e^{i\theta_j}$ for all $j=p+1,...,d$, where $(\rho_j)_j,(\theta_j)_j\subset \R$.
	Observe that, for any $\alpha\in \Z^p$, $|\lambda^\alpha|=1$ if and only if $\sum_{j=1}^p \alpha_j\rho_j=0$. 
	Let $E:=\{\alpha\in\Q^d:~\sum_{j=1}^p\alpha_j\rho_j=0\}$. 
	We extract from $\{\rho_1,...,\rho_p\}$ a $\Q$-linearly independent family of maximal cardinality, namely $\{\rho_1,...,\rho_q\}$ and we set $(a_{j,k})_{j,k}\subset \Q$ such that
	
	\[ \rho_k= -\sum_{j=1}^q a_{j,k}\rho_j,~ \text{for all }k=q+1,...,p.\]
	Then, it follows that
	\begin{align}\label{equiv: lemma} 
	 \alpha\in E~\iff ~ \alpha_j=\sum_{k=q+1}^p a_{j,k}\alpha_k,~\text{for all }j=1,...,q.
	\end{align}
	
	\textbf{Claim.} The set $\{\pi\}\cup\{\theta_k+\sum_{j=1}^qa_{j,k}\theta_j:k=q+1,...,p\}\cup\{\theta_k:~k=p+1,...,d\}$ is $\Q$-linearly independent.
	
	 Indeed, otherwise there are $m,(r_k)_k\subset \Z$ such that
	
	\begin{align*}
	0&=m\pi+\sum_{k=q+1}^p r_k\big(\theta_k+\sum_{j=1}^q a_{j,k}\theta_j\big)+\sum_{k=p+1}^d r_k\theta_k
	\end{align*}
	\begin{align}\label{eq: lemma 1}
	&= m\pi + \sum_{j=1}^q \left( \sum_{k=q+1}^p r_ka_{j,k} \right)\theta_j +\sum_{k=q+1}^d r_k\theta_k.
	\end{align} 
	
	Let us define $\alpha\in \Q^d$ by 
	\begin{align*}
	\alpha_j:=\begin{cases}
	 \sum_{k=q+1}^p r_ka_{j,k} & ~\text{if } j=1,...,q.\\
	 r_j & ~\text{if } j=q+1,...,d .
	\end{cases}
	\end{align*}
	Thus, thanks to~\eqref{equiv: lemma}, $\alpha\in E $. However, for some $K\in \N$, $K\alpha\in \Z^d\cap E$. Then \eqref{eq: lemma 1} contradicts assumption~\eqref{hypothesis} and the claim is proved. 
	
	Let us fix $\gamma\in \mcn$.
	Observe that $(\alpha-\gamma)\times\{0\}^{d-p}\in E$ for any $ \alpha\in \mcn$, i.e. 
\begin{equation}\label{eq:alphagamma}
\alpha_j-\gamma_j=\sum_{k=q+1}^p a_{j,k}(\alpha_k-\gamma_k),~\text{for all } j=1,...,q. \end{equation}
 	Now, notice that
 	\begin{align*}
 	g_n:&=f(\mu_{p+1}^n,...,\mu_{d}^n)\sum_{\alpha\in \mcn} \left(\dfrac{\lambda^{\alpha}}{\lambda^{\gamma}} \right)^n x_\alpha\\
 	&=f(e^{in\theta_{p+1}},...,e^{in\theta_{d}})\sum_{\alpha\in \mcn}  x_\alpha \prod_{k=q+1}^p e^{in(\theta_{k} +\sum_{j=1}^q a_{j,k}\theta_j)(\alpha_{k}-\gamma_{k})   }.
 	\end{align*}
 	Therefore, thanks to the above claim and Kronecker's Theorem, we conclude that, for any $w\in \T^{d}$, there is a sequence of integers $(n(l))_l$ such that
 	
 	\[g_{n(l)}\xrightarrow[l\to\infty]{ }  f(w_{p+1},...,w_d)\sum_{\alpha\in \mcn}  x_\alpha \prod_{k=q+1}^p w_k^{\alpha_k-\gamma_k}.\]
 	
 	Finally, Lemma~\ref{obtaining L alpha} follows directly from Lemma~\ref{invertible matrix} and the fact that the function
 	$\alpha\in \mathcal{N}\mapsto (\alpha_k-\gamma_k)_{k=q+1,\dots,p}$ is one-to-one by \eqref{eq:alphagamma}.
\end{proof}

Now we are ready to prove the first half of Theorem~\ref{Theorem 1}.

\begin{proof}[Proof of Theorem~\ref{Theorem 1}: Sufficient condition]
Let $\varphi(z):=Az+b$ be an affine map such that $C_\varphi$ induces a bounded composition operator on $\F(\C^d)$. 
Let us assume that the canonical Jordan form of the invertible matrix $A$ admits exactly one Jordan block of size $2$ and $d-2$ Jordan blocks of size $1$.
Also, we assume that the eigenvalues of $A$ satisfy the hypothesis of the statement of Theorem~\ref{Theorem 1}.
If $A$ is diagonalizable, the proof is completely similar (in fact, simpler). The details of this case are left to the reader.

By Proposition~\ref{conjugated operator}, we can (and shall) assume that $A=\begin{pmatrix} T&0\\ 0&U \end{pmatrix}$, where $T\in \C^{p\times p}$ is an upper triangular matrix and $U\in \C^{(d-p)\times (d-p)}$ is a unitary diagonal matrix, and $b\in \C^p\times \{0\}^{d-p}$. 
Let us call $\lambda\in \C^p$ the diagonal of $T$, i.e. $\lambda$ contains all the eigenvalues of $A$ of modulus lower than $1$ and we further assume that $\lambda_{p-1}=\lambda_p$. \\

Thanks to Proposition~\ref{polynomials of deg 1}, there is $L=(L_i)_{i=1}^p\subset\F(\C^d)$ a finite sequence of linearly independent polynomials of degree $1$ such that 
$C_\varphi L_i(z) = \lambda_i L_i(z)$ for all $i=1,...,p-1$, and $C_\varphi L_p(z)= \lambda_{p-1}L_p + L_{p-1}$.
%%%%
%The existence of these polynomials can be proved as follows: let $\phi_1,\dots,\phi_p$
%be independent linear forms (obtained from the Jordan form of $A^T$) such that 
%$\phi_i\circ A=\lambda_i \phi_i$ for $i=1,\dots,p-1$ and $\phi_p\circ A=\lambda_{p-1}\phi_p+\phi_{p-1}$. We set $L_i=\phi_i+c_i$ where $c_i=-\phi_i(b)/(1-\lambda_i)$
%if $i=1,\dots,p-1$, $c_p=(c_{p-1}-\phi_p(b))/(1-\lambda_{p-1})$.
Observe that, for each $i=1,\dots,p$, the polynomial $L_i$ depends only on $\{z_1,...,z_p\}$. 
%%%
Therefore, $\{L^\alpha:\ \alpha\in\N^p\}$ is a basis of the vector space of polynomials on $(z_1,\dots,z_p)$.\\

In order to continue, we define $\rho_0=1$ and for each $k\in \N$, $k\geq 1$:

\[\rho_k := 2^{-k} \Bigg(\sum_{\substack{ \alpha\in \N^p \\ |\alpha|=k} } \| L^\alpha\| \Bigg)^{-1}\wedge \rho_{k-1}.\]

Let us set $w:=\{0\}^{p}\times \{1\}^{d-p}\in \C^d$. Observe that, since $k_w(z)=\exp (\frac{\langle z,w\rangle}{2})$, $k_w$ depends only on $(z_i)_{i=p+1}^d$.
Let us consider the function $h$ defined by

\[ z\in \C^d\mapsto h(z):= k_w(z) \left( \sum_{\substack{ \alpha \in \N^p }} d_{\alpha}L^{\alpha}(z)  \right),\]
where $d_{\alpha}= \rho_{| \alpha| }>0 $ for all $\alpha\in \N^p$. Observe that, thanks to Proposition~\ref{product of functions} and the definition of $( \rho_k)_k$, the function $h$ belongs to $\mathcal{F}(\C^d)$, with $\|h\|\leq 2\|k_w\|$.\\

 We claim that $h$ is a cyclic vector for $C_\varphi$. Let us denote $H:= \overline{\textup{span}}(C^j_\varphi h:~j\in\N)$. In what follows, we proceed by induction to prove that, for every $\alpha \in \N^p$ and every $\widehat{w}\in \{0\}^p\times \T^{d-p}$, $k_{\widehat{w}}L^{\alpha}\in H$. 
A key point will be to understand how the multiindices $\alpha$ are ordered.
 
Let us consider a decreasing enumeration $(R(n))_n$ of the set $\{|\lambda^\alpha|:~\alpha\in\N^p\}$. 
 Also, for $n\in\N$, we define $\mcn(n):= \{\alpha\in\N^p:~| \lambda^\alpha|=R(n)\}$.
 Observe that $R(0)=1$, $\mcn(0)=\{\{0\}^p\}$, that each $\mcn(n)$ is finite and that $\{\mcn(n):n\in\N\}$ is a partition of $\N^p$.
At step $n$, we will show that $k_{\widehat w}L^\alpha\in H$ for all $\alpha\in \mcn(n)$ and all $\widehat{w}\in \{0\}^p\times \T^{d-p}$. 
 
As in Proposition~\ref{adjoint operator}, we write $\widehat{\varphi}(z)=A^*z$. 
This notation allows us to state the following fact which will be used without special mention.

\smallskip

\textbf{Fact.}  $C_{\varphi}^j k_{w}=k_{\widehat{\varphi}^j(w)}$ for all $j\geq 1$.
 Indeed, using Proposition~\ref{adjoint operator}, for any $f\in \F(\C^d)$ we get

\begin{align*}
\langle C_{\varphi} k_{w},f\rangle&=
\langle (W_{k_b,\widehat{\varphi}})^* k_{w},f\rangle = \langle k_w, W_{k_b,\widehat{\varphi}}(f)\rangle \\
&= \langle k_w, k_b f\circ\widehat{\varphi}\rangle= k_b(w)f(\widehat{\varphi}(w))\\
&= k_b(w)\langle k_{\widehat{\varphi}(w)},f\rangle.
\end{align*}
But $k_b(w)=\exp(\langle w,b\rangle/2)= 1$, proving the fact for $j=1$. Inductively, since $\langle b,\widehat{\varphi}^j(w)\rangle=0$ for all $j$, we obtain that $C_{\varphi}^j k_{w}=k_{\widehat{\varphi}^j(w)}$ for all $j\geq 1$.
Here we use that \[\widehat{\varphi}^j(w)=\{0\}^{p}\times (U^*)^j(\{1\}^{d-p})\in \{0\}^{p}\times\T^{d-p}.\]

\smallskip

\textbf{Initialization step.} We prove that $H$ contains the set $\{k_{\widehat{w}}:~\widehat{w} \in \{0\}^p\times \T^{d-p}\}$. 
Let us consider $\{D_i:~i=1,...,p\}$ a partition of $\N^d\setminus \{\{0\}^{d}\}$ such that, for all $i\in \{1,...,p\}$ and all $\alpha \in D_i$, $\alpha_i\geq 1$.
Denote by $e(i)\subset\N^d$ the multi-index satisfying $|e(i)|=1$ and $e(i)_i=1$ for all $i=1,...,d$.
For $j\in\N$, we compute

\begin{align*}
C^j_\varphi h&= k_{\widehat{\varphi}^j(w)} C^j_\varphi \left( d_{0}+\sum_{i=1}^{p}L_i\sum_{\alpha\in D_i}d_{\alpha}L^{\alpha-e(i)}
\right)\\
&= k_{\widehat{\varphi}^j(w)} d_{0}+k_{\widehat{\varphi}^j(w)}\sum_{i=1}^{p-1}\lambda_i^jL_iC^j_\varphi \left(\sum_{\alpha\in D_i}d_{\alpha} L^{\alpha-e(i)}\right)\\
&+k_{\widehat{\varphi}^j(w)}(\lambda_{p-1}^jL_p+j\lambda_{p-1}^{j-1}L_{p-1})C^j_\varphi\left(\sum_{\alpha\in D_{p}}d_{\alpha}L^{\alpha-e(p)}\right)
\\
\end{align*}
We claim that the second and third summand of the last expression tend to $0$ in $\F(\C^d)$ as $j$ tends to $+\infty$. Indeed, fix $i\in\{1,\dots,p\}$ and let $D_i(n)=\{\alpha\in D_i:\ |\alpha|=n\}$. Then by definition of $d_\alpha$ and Proposition~\ref{computation for induction},
\begin{align*}
C_\varphi^j\left(\sum_{\alpha\in D_i}d_\alpha L^{\alpha-e(i)}\right)&=
\sum_{n=1}^{+\infty}\rho_n C_\varphi^j\left(\sum_{\alpha\in D_i(n)}L^{\alpha-e(i)}\right)\\
&=\sum_{n=1}^{+\infty}\rho_n \sum_{\substack{\alpha\in \N^p\\|\alpha|=n-1}}c(\alpha,D_i(n),j)L^\alpha
\end{align*}
with $|c(\alpha,D_i(n),j)|\leq 1$ for $j$ bigger than some $J$, with $J$ independent 
of $i$ and $n$. Now, 
$$\left\|L_i C_\varphi^j\left(\sum_{\alpha\in D_i(n)}L^{\alpha-e(i)}\right)\right\|\leq \sum_{n=1}^{+\infty}\rho_n \sum_{|\alpha|=n}\|L^\alpha\|\leq 1.$$
Taking into account that \[\|k_{\widehat{\varphi}^j(w)}\|=\exp(|\widehat{\varphi}^j(w))|^2/4)=\exp(|w|^2/4),\] 
and since $\lambda_i\in\D$, Proposition~\ref{product of functions} achieves the proof of the claim.

\smallskip
 
Therefore, the sequence $(C^j_\varphi h)_j$ accumulates at the same points that the sequence $(d_0 k_{\widehat{\varphi}^j(w)})_j$ does. 
Observe that $\hat{\varphi}^j(w)= \{0\}^p\times U^{*j}(\{1\}^{d-p})$ where $U^*:= \textup{diag}(\exp(i\theta_{p+1}),...,\exp(i\theta_d))$.
Moreover, thanks to the hypothesis of the eigenvalues of $A$, the set $\{\pi,\theta_{p+1},...,\theta_{d}\}$ is $\Q$-linearly independent. 
Hence, due to Kronecker's Theorem, for any $\widehat{w}\in \{0\}^p\times\T^{d-p}$, there is a sequence $(j(l))_l\subset \N$ such that $(d_0 k_{\widehat{\varphi}^{j(l)}(w)})_l$ converges to $d_0k_{\widehat{w}}$.
This finishes the proof of the initialization step.

\medskip

\textbf{Inductive step.} Let $n\geq 1$ and assume that $k_{\widehat{w}}L^\alpha\in H$ for all $\alpha\in \bigcup \{\mcn(m):~m\leq n-1\}$ and all $\widehat{w}\in \{0\}^p\times \T^{d-p}$. We prove that $k_{\widehat{w}}L^\alpha\in H$ for all $\alpha\in \mcn(n)$ and all $\widehat{w}\in \{0\}^p\times \T^{d-p}$.\\

Let us fix $\widehat{\alpha}\in \mcn(n)$ such that $\widehat{\alpha}_{p-1}$ is maximum among $\alpha_{p-1}$, for $\alpha\in \mcn(n)$.
Also, let $\{D_i:~i\in I\}$ be a finite partition of $\N^p\setminus \bigcup \{\mcn(m):~m\leq n\}$, given by Lemma~\ref{lem:combinatorial}, satisfying the following condition: for each $i\in I$, there is $\alpha(i)\in D_i$ such that for each $\alpha\in D_i$ we have $\alpha\geq \alpha(i)$. 
Let us define
\begin{align*}
g:= h -k_{w}\sum_{m=0}^{n-1}\sum_{\alpha\in \mcn(m)}d_\alpha L^\alpha =k_{w}\sum_{m=n}^{\infty}\sum_{\alpha\in \mcn(m)}d_\alpha L^\alpha . 
\end{align*} 
and notice that, thanks to the induction hypothesis, $g\in H$. 
In order to simplify the notation, let us set $\Lambda=\lambda^{\widehat{\alpha}}$.
Observe that $|\Lambda|=R(n)$.
Thus, for $j\in \N$ we have that
\begin{align}\label{eq: 1}
\dfrac{C^j_\varphi g }{\Lambda^j j^{\widehat{\alpha}_{p-1}}}&=k_{\widehat \varphi^j(w)}\left( \sum_{\alpha\in \mcn(n)}\dfrac{d_\alpha C^j_{\varphi}(L^\alpha)}{\Lambda^j j^{\widehat{\alpha}_{p-1}}}+ \sum_{i\in I}\dfrac{C^j_{\varphi}(L^{\alpha(i)})}{\Lambda^j j^{\widehat{\alpha}_{p-1}}}C^j_{\varphi}\left( \sum_{\alpha\in D_i}d_\alpha L^{\alpha-\alpha(i)}\right)\right)\in H.
\end{align}

Let us check that the second summand of \eqref{eq: 1} tends to $0$ as $j$ tends to infinity. 
Indeed, let  us fix $i\in I$. Then
\begin{align*}
\dfrac{C^j_\varphi(L^{\alpha(i)})}{\Lambda^j j^{\widehat{\alpha}_{p-1}}}&= \dfrac{\lambda^{j\alpha(i)}}{\Lambda^j j^{\widehat{\alpha}_{p-1}}} \left(L_p+\dfrac{j}{\lambda_{p-1}}L_{p-1}\right)^{\alpha(i)_p}\prod _{m=1}^{p-1}L_m^{\alpha(i)_m} \\
&= \left(\dfrac{\lambda^{\alpha(i)}}{\Lambda}\right)^j\dfrac{1}{j^{\widehat{\alpha}_{p-1}}}\sum_{\beta=0}^{\alpha(i)_p}\begin{pmatrix}
\alpha(i)_p \\ \beta 
\end{pmatrix} \left(\dfrac{j}{\lambda_{p-1}}\right)^{\alpha(i)_p-\beta} L_{p-1}^{\alpha(i)_p-\beta}L_p^\beta \prod _{m=1}^{p-1}L_m^{\alpha(i)_m}\\
&=:\sum_{\beta=0}^{\alpha(i)_p} a(i,j,\beta) L_{p-1}^{\alpha(i)_p-\beta}L_p^\beta \prod _{m=1}^{p-1}L_m^{\alpha(i)_m},
\end{align*}
where $(a(i,j,\beta))_{i,j,\beta}$ are the respective coefficients.
By definition of $R(n)$ and $\mcn(n)$, we have that $|\lambda^{\alpha(i)}|< R(n)= |\Lambda|$. 
Therefore, all the coefficients $a(i,j,\beta)$ of the above expression tend to $0$ as $j$ tends to infinity, whatever the value of $\widehat{\alpha}_{p-1}$.
%Now, thanks to Proposition~\ref{computation for induction}, Proposition~\ref{product of functions} and the definition of $d_\alpha$ we have that for some $|c(i,j,\alpha)|\leq 1$:
It is now straightforward to modify the proof of the initialization step to show that

\begin{align*}
k_{\widehat \varphi^j(w)}\sum_{i\in I}\dfrac{C^j_{\varphi}(L^{\alpha(i)})}{\Lambda^j j^{\widehat{\alpha}_{p-1}}}C^j_{\varphi}\left( \sum_{\alpha\in D_i}d_\alpha L^{\alpha-\alpha(i)}\right)
%= k_{\varphi^j(w)}\sum_{i\in I}\sum_{\beta=0}^{\alpha(i)_p}a(i,j,\beta)\sum_{\alpha\in\N^p}c(i,j,\alpha)d_\alpha L^{\alpha}\\
&\xrightarrow[j\to\infty]{ }  0.
\end{align*}
Thus, the sequence $(C^j_\varphi g/ \Lambda^j j^{\widehat{\alpha}_{p-1}})$ accumulates at the same points as the first sum of \eqref{eq: 1}.
Now, observe that
\begin{align*}
\sum_{\alpha\in \mcn(n)} d_\alpha C^j_\varphi (L^\alpha)&=\sum _{\alpha\in \mcn(n)}d_\alpha \lambda^{j\alpha} \left(L_p+\dfrac{j}{\lambda_{p-1}}L_{p-1}\right)^{\alpha_p}\prod_{i=1}^{p-1} L_i^{\alpha_i}\\
&=\sum_{\alpha\in \mcn(n)}\sum_{\beta=0}^{\alpha_p}d_\alpha \begin{pmatrix}
\alpha_p\\ \beta 
\end{pmatrix}\lambda^{j\alpha}\left(\dfrac{j}{\lambda_{p-1}}\right)^{\alpha_p-\beta}L_{p-1}^{\alpha_{p-1}+\alpha_p-\beta} L_p^\beta\prod_{i=1}^{p-2} L_i^{\alpha_i}.
\end{align*}
Rearranging the last expression and recalling that $d_\alpha= \rho_{|\alpha|}$, we get
\begin{align*}
\sum_{\alpha\in \mcn(n)} d_\alpha C^j_\varphi (L^\alpha)&=\sum_{\alpha\in \mcn(n)}d_\alpha \lambda^{j\alpha} L^\alpha \left(\sum_{\beta=\alpha_p}^{\alpha_{p-1}+\alpha_p}\begin{pmatrix}
\beta \\ \alpha_p 
\end{pmatrix}  \left(\dfrac{j}{\lambda_{p-1}} \right)^{\beta-\alpha_p}\right).
\end{align*}
So, for all $\alpha\in \mcn(n)$, the coefficient that multiplies $L^\alpha$ tends to $0$ as the same rate as $R(n)^jj^{\alpha_{p-1}}$. Let us consider now
\[\mcn(n,m):=\{\alpha\in \mcn(n):~\alpha_{p-1}=m\}.\]
It follows that $\{\mcn(n,m):~m=0,...,\widehat{\alpha}_{p-1}\}$ is a partition of $\mcn(m)$. Also, the accumulation points of the sequence $(C^j_{\varphi}g/ \Lambda^jj^{\widehat{\alpha}_{p-1}})$ coincide with the accumulation points of the sequence
\[\left(k_{\widehat \varphi^j(w)} \sum_{\alpha\in \mcn(n,\widehat{\alpha}_{p-1})}
d_\alpha \begin{pmatrix}
\alpha_{p-1}+\alpha_p\\ \alpha_{p} 
\end{pmatrix}\left(\dfrac{\lambda^{\alpha} }{\lambda^{\widehat{\alpha}}}\right)^j L^\alpha\right)_j.\]
Thanks to the hypothesis of the eigenvalues of $A$ and Lemma~\ref{obtaining L alpha}, we get that 
\begin{align*}
\{ k_{\widehat{w}}L^{\alpha}:~\alpha\in \mcn(n,\widehat{\alpha}_{p-1}),~\widehat{w}\in \{0\}^{p}\times\T^{d-p}\}\subset H.
\end{align*}
Inductively, we obtain that for all $m=0,\dots,\widehat{\alpha}_{p-1}$, 
\begin{align*}
\{ k_{\widehat{w}}L^{\alpha}:~\alpha\in \mcn(n,m),~\widehat{w}\in \{0\}^{p}\times\T^{d-p}\}\subset H.
\end{align*}
Indeed, let us assume that the last inclusion holds true for $m=M+1,...,\widehat{\alpha}_{p-1}$. To show that it also holds true for $m=M$, we proceed as above but considering the sequence
\begin{align*}
\dfrac{1}{\Lambda^j j^{M}}\left(C_\varphi^j g - \sum_{m=M+1}^{\widehat{\alpha}_{p-1}}\sum_{\alpha\in \mcn(n,m)}d_\alpha \lambda^{j\alpha} L^\alpha \left(\sum_{\beta=\alpha_p}^{\alpha_{p-1}+\alpha_p}\begin{pmatrix}
\beta \\ \alpha_p 
\end{pmatrix}  \left(\dfrac{j}{\lambda_{p-1}} \right)^{\beta-\alpha_p}\right) \right) \in H,~\forall j\geq 1.
\end{align*} 

\medskip

\textbf{Conclusion.} To conclude the proof, one only need to show that $H=\F(\C^d)$.
Since $\{L^\alpha:~\alpha\in\N^p\}$ is a basis of the vector space of polynomials on $(z_1,..., z_p)$, we have proved that 
\[\{z^\alpha k_{\widehat{w}}:~ \alpha\in \N^{p}\times\{0\}^{d-p},~\widehat{w}\in\{0\}^p\times\T^{d-p}\}\subset H.\]
Let $f\in \F(\C^d)$ be such that $\langle f,g\rangle =0$ for all $g\in H$.
Let us write $f(z):= \sum_{\alpha\in \N^d}a_\alpha z^\alpha$. We know that, for any $\widehat{w}\in \{0\}^p\times\C^{d-p} $, we have  
\[k_{\widehat{w}}(z)= \sum_{\alpha\in \{0\}^p\times\C^{d-p}} c_\alpha z^\alpha,\]
for some sequence $(c_\alpha)_\alpha\subset\C$ depending on $\widehat{w}$. 
Let us fix $\beta\in \N^p\times\{0\}^{d-p}$ and let $P:\N^d\to\N^p\times \{0\}^{d-p}$ be the canonical projection onto the first $p$ coordinates. 
Also, let us consider the function $f_\beta\in \HH(\C^d)$ defined by
\[\sum_{\substack{ \alpha \in \N^d\\
		P(\alpha)=\beta }}a_\alpha z^\alpha =z^\beta\sum_{\substack{ \alpha \in \N^d\\
		P(\alpha)=\beta }}a_\alpha z^{\alpha-\beta} =:  z^\beta f_\beta(z).\]
Observe that $f_\beta$ and $k_{\widehat{w}}$ only depend on $(z_{p+1},...,z_d)$. 
Then, it follows from the orthogonality of the monomials $\{z^\alpha:~\alpha\in\N^p\}$ that
\begin{align*}
0&= \langle f, z^\beta k_{\widehat{w}}\rangle =  \langle z^\beta f_\beta, z^\beta k_{\widehat{w}}\rangle= \|z^\beta\|^2\overline{f_\beta(\widehat{w})}.
\end{align*}
Thus, $f_\beta$ vanishes on $\{0\}^{p}\times \T^{d-p}$. Since $f_\beta$ is an entire function depending only on the last $d-p$ coordinates, we conclude that $f_\beta\equiv 0$. 
Therefore, $a_\alpha=0$ for all $P(\alpha)=\beta$, where $\beta$ is any arbitrary multi-index in $\N^p\times\{0\}^{d-p}$. 
This yields that $f\equiv 0$ and the proof of cyclicity of $C_\varphi$ is complete. 
\end{proof}

\section{Non-cyclic composition operators}\label{section 4}

We split the proof of the necessary condition of Theorem~\ref{Theorem 1} in the following four propositions.
\begin{proposition}\label{non-cyclicity 1}
	Let $A\in \C^{d\times d}$ be a non-invertible matrix and let $b\in\C^d$ be such that $C_{Az+b}$ induces a bounded composition operator on $\F(\C^d)$. Then $C_{Az+b}$ is not cyclic.
\end{proposition}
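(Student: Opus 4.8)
The plan is to use the adjoint criterion for non-cyclicity, Proposition~\ref{noncyclicity adjoint}, by exhibiting an eigenvalue $\lambda$ of $C_{Az+b}^*$ whose eigenspace has dimension at least $2$. The natural candidate here is $\lambda=0$, and the non-invertibility of $A$ is exactly what makes $\ker(C_{Az+b}^*)$ large. First I would record, using Proposition~\ref{adjoint operator}, that $C_{Az+b}^*=W_{k_b,\widehat\varphi}$ with $\widehat\varphi(z)=A^*z$, so that for any $f\in\F(\C^d)$,
\[
(C_{Az+b}^*f)(z)=k_b(z)\,f(A^*z).
\]
Thus a vector $f$ lies in $\ker(C_{Az+b}^*)$ as soon as $f\circ\widehat\varphi\equiv 0$, i.e. as soon as $f$ vanishes on $\mathrm{Ran}(A^*)$.

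Since $A$ is non-invertible, $\ker A\neq\{0\}$, so I fix some $u\in\ker A\setminus\{0\}$ and consider the linear polynomial $\ell(z):=\langle z,u\rangle$, which is holomorphic and belongs to $\F(\C^d)$. The key computation is that $\ell\circ\widehat\varphi$ vanishes identically: for every $z\in\C^d$,
\[
\ell(A^*z)=\langle A^*z,u\rangle=\langle z,Au\rangle=0,
\]
because $Au=0$. Consequently $\ell^k\circ\widehat\varphi\equiv 0$ for every $k\geq 1$ as well, whence
\[
C_{Az+b}^*\ell=k_b\cdot(\ell\circ\widehat\varphi)=0
\qquad\text{and}\qquad
C_{Az+b}^*\ell^2=k_b\cdot(\ell\circ\widehat\varphi)^2=0.
\]
Both $\ell$ and $\ell^2$ are nonzero polynomials in $\F(\C^d)$ (they are nonzero since $u\neq 0$), and they are linearly independent because they have distinct degrees.

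This shows $\dim\bigl(\ker(C_{Az+b}^*-0\cdot I)\bigr)\geq 2$, so Proposition~\ref{noncyclicity adjoint} immediately yields that $C_{Az+b}$ is not cyclic. I do not expect any serious obstacle in this argument: the only points needing a line of justification are that $\ell^{k}\in\F(\C^d)$ (polynomials belong to the Fock space) and that the weight $k_b$ plays no role here, since it multiplies a function that is already identically zero. It is worth remarking that the alternative route through the identity $C_{Az+b}^*k_w=k_{\varphi(w)}$ (eigenvectors from fixed points) does not work directly, because non-invertibility of $A$ controls the eigenvalue $0$ of $A$ rather than the eigenvalue $1$, and $\varphi$ may well have a unique fixed point; using $\ker A$ is what makes the eigenvalue $\lambda=0$ available.
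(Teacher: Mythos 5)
Your proof is correct, but it follows a genuinely different route from the paper's. You apply the adjoint criterion (Proposition~\ref{noncyclicity adjoint}) at the eigenvalue $\lambda=0$: since $Au=0$ for some $u\neq 0$, the linear form $\ell(z)=\langle z,u\rangle$ satisfies $\ell\circ\widehat{\varphi}\equiv 0$ because $\langle A^*z,u\rangle=\langle z,Au\rangle=0$, so $\ell$ and $\ell^2$ (indeed all powers $\ell^k$) lie in $\ker(C_{Az+b}^*)=\ker(W_{k_b,\widehat{\varphi}})$, giving a kernel of dimension at least $2$; note that Proposition~\ref{noncyclicity adjoint} is stated for any $\lambda\in\C$, so $\lambda=0$ is allowed, and your two eigenvectors are indeed linearly independent polynomials in $\F(\C^d)$, so the argument is complete. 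The paper instead argues directly on the orbit: after conjugating by a unitary (Proposition~\ref{conjugated operator}) so that $A$ is upper triangular with the eigenvalue $0$ placed first on the diagonal, the first column of $A$ vanishes, hence $\varphi^j(z)$, and therefore $C_\varphi^j f$, does not depend on $z_1$ for every $j\geq 1$; consequently $\textup{span}(\textup{orb}(C_\varphi,f))$ is contained in $\textup{span}(f)$ plus the closed subspace of functions independent of $z_1$, which has infinite codimension, so no $f$ can be cyclic. Your approach mirrors the technique the paper reserves for Proposition~\ref{non-cyclicity 2} (exhibiting multiple eigenvectors of the adjoint), and it buys a coordinate-free argument that avoids the Schur decomposition entirely and even shows $\ker(C_{Az+b}^*)$ is infinite-dimensional; the paper's argument buys elementarity, needing neither the adjoint formula nor the kernel criterion, only the structure of the orbit itself.
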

\begin{proof}
	Since cyclicity is stable under conjugacy, let us assume that $A$ and $b$ have the form given by Proposition~\ref{conjugated operator}. 
We may even assume that the eigenvalue $0$ is placed at the first position of the diagonal of $A$. This implies that the first column of $A$ only has $0$'s.
	Therefore, for any $j\in \N$, with $j\geq 1$, the vector $\varphi^j (z)$ does not depends on $z_1$. 
	Thus, for any $f\in \F(\C^d)$ and $j\geq 1$, the function $C_\varphi^j f$ depends only on $(z_i)_{i=2}^d$. 
	Hence, $f$ cannot be cyclic for $C_\varphi$. 
	Since $f$ is arbitrary, $C_\varphi$ is not a cyclic operator.
	
\end{proof}

\begin{proposition}\label{non-cyclicity 2}
	Let $A\in \C^{d\times d}$ be an invertible matrix and let $b\in\C^d$ be such that $C_{Az+b}$ induces a bounded composition operator on $\mathcal F(\C^d)$. 
	Let $\lambda:=(\lambda_1,...,\lambda_n)\in \overline{\D}^n$ be the eigenvalues of $A$ repeated by geometric multiplicity, $1\leq n\leq d$. 
	If there is $\alpha\in \Z^n\setminus\{0\}^n$ such that $\lambda^\alpha=1$, then $C_{Az+b}$ is not cyclic.
\end{proposition}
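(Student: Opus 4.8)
The plan is to exhibit an eigenvalue of the adjoint $C_{\varphi}^*$ whose eigenspace has dimension at least two, so that Proposition~\ref{noncyclicity adjoint} forces $C_{\varphi}$ to be non-cyclic. By Proposition~\ref{adjoint operator} we know that $C_\varphi^*=W_{k_b,\widehat\varphi}$ with $\widehat\varphi(z)=A^*z$, that is, $(C_\varphi^* f)(z)=k_b(z)f(A^*z)$. Thus I must produce two linearly independent functions in $\F(\C^d)$ that are eigenvectors of this weighted composition operator for one common eigenvalue.

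The key device handles the translation part $b$. Since $b\in\textrm{Ran}(I-A)$ by Lemma~\ref{lem:range}, I can fix $c\in\C^d$ with $(I-A)c=b$. A direct computation using $\langle A^*z,c\rangle=\langle z,Ac\rangle$ together with $b+Ac=c$ gives $k_b(z)k_c(A^*z)=k_c(z)$, whence, for every polynomial $g$,
\[ C_\varphi^*(k_c g)=k_c\cdot(g\circ A^*)=k_c\cdot C_{A^*z}(g). \]
In other words, multiplication by the nonvanishing entire function $k_c$ intertwines the unweighted operator $C_{A^*z}$ with $C_\varphi^*$; note that $k_c g\in\F(\C^d)$ whenever $g$ is a polynomial. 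As this multiplication is injective, it sends any eigenvector of $C_{A^*z}$ to an eigenvector of $C_\varphi^*$ with the same eigenvalue, and it preserves linear independence. It therefore suffices to find two linearly independent polynomial eigenvectors of $C_{A^*z}$ sharing a single eigenvalue.

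These come from the eigen-linear-forms of $C_{A^*z}$. Acting on a linear form by $C_{A^*z}$ amounts to applying $\bar A$ to its vector of coefficients, and $\bar A$ has eigenvalues $\bar\lambda_1,\dots,\bar\lambda_n$ with the same geometric multiplicities as $A$; hence there are $n$ linearly independent linear forms $M_1,\dots,M_n$ with $C_{A^*z}M_j=\bar\lambda_j M_j$. Because $C_{A^*z}$ is multiplicative, each monomial $M^\beta$ ($\beta\in\N^n$) is an eigenvector with eigenvalue $\bar\lambda^\beta$, and distinct $\beta$ give linearly independent $M^\beta$. Now write the given $\alpha\in\Z^n\setminus\{0\}^n$ as $\alpha=\alpha^+-\alpha^-$ with $\alpha^+,\alpha^-\in\N^n$ of disjoint support; since $A$ is invertible, $\lambda^\alpha=1$ rewrites as $\lambda^{\alpha^+}=\lambda^{\alpha^-}$, so $M^{\alpha^+}$ and $M^{\alpha^-}$ are two eigenvectors of $C_{A^*z}$ for the common eigenvalue $\nu:=\overline{\lambda^{\alpha^+}}=\overline{\lambda^{\alpha^-}}$, and they are linearly independent because $\alpha\neq 0$ forces $\alpha^+\neq\alpha^-$. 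Transporting them by $k_c$ yields $\dim\ker(C_\varphi^*-\nu I)\geq 2$, and Proposition~\ref{noncyclicity adjoint} concludes. I expect the only real obstacle to be the bookkeeping for $b\neq 0$: recognizing that the twist by $k_c$ with $(I-A)c=b$ exactly cancels the weight $k_b$ and reduces the weighted adjoint to the purely linear operator $C_{A^*z}$, whose eigenstructure is transparent.
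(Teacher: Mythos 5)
Your proof is correct and takes essentially the same route as the paper: both produce two linearly independent eigenvectors of $C_\varphi^*$ of the form (monomial in eigen-linear-forms of $C_{A^*z}$) times an exponential factor $e^{\langle z,c\rangle}$ with $(I-A)c$ proportional to $b$, associated to the common eigenvalue $\overline{\lambda^{\alpha^+}}=\overline{\lambda^{\alpha^-}}$, and then invoke Proposition~\ref{noncyclicity adjoint}. Your packaging of the exponential twist as the intertwining $C_\varphi^*(k_c g)=k_c\, C_{A^*z}(g)$, and your uniform treatment of the case $\alpha\in\N^n$ (where the paper splits into two subcases), are only cosmetic differences.
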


\begin{proof}
 By Proposition~\ref{polynomials of deg 1}, let us consider $L_1,\dots,L_n$ be $n$ linearly independent polynomials of degree $1$ such that $C_{\widehat\varphi} L_j=\overline{\lambda_j} L_j$. 
Let $c\in\C^d$ be such that $(I-A)c=\frac b2$ (see Lemma \ref{lem:range}). 
Then for any $\alpha\in\N^n$, the function $z\mapsto L^\alpha e^{\langle z,c\rangle}$ (which belongs to $\mathcal F(\C^d)$ as a product of an exponential function with a polynomial) is an eigenvector
of $C_{\varphi}^*=M_{k_b}C_{\widehat\varphi}$ associated to $\overline{\lambda}^\alpha$. Indeed, 
	\begin{align*}
	C^*_\varphi (L^\alpha e^{\langle z,c\rangle})&= k_b(z)C_{\widehat{\varphi}}(L^\alpha e^{\langle z,c\rangle})=e^{\frac{\langle z,b\rangle }{2}}\overline{\lambda}^\alpha L^\alpha e^{\langle A^*z,c\rangle}=\overline{\lambda}^\alpha L^\alpha e^{\langle z, Ac+\frac{b}{2}\rangle}=\overline{\lambda}^\alpha L^\alpha e^{\langle z,c\rangle}.
	\end{align*}
Suppose now that $\alpha\in\Z^d\backslash\{0\}^d$ satisfy $\lambda^\alpha=1$. If $\alpha\in \N^d$, then the functions $\{L^{n\alpha}e^{\langle z,c\rangle}:\ n\geq 0\}$ are linearly independent eigenvectors of $C_\varphi^*$ associated to the eigenvalue $1$. Thus by Proposition~\ref{noncyclicity adjoint}, $C_\varphi$ is not cyclic. If $\alpha\in\Z^d\backslash \N^d$, let $\alpha^+=(\max(\alpha_j,0))_j$ and $\alpha^-=(-\min(\alpha_j,0))_j$
so that $\alpha^+,\alpha^-\in\N^d$ and $\lambda^{\alpha^+}=\lambda^{\alpha^-}$. 
Now, $L^{\alpha^+}e^{\langle z,c\rangle}$ and $L^{\alpha^-}e^{\langle  z,c\rangle}$ are two linearly independent eigenvectors of $C_\varphi^*$ associated to the same eigenvalue $\overline{\lambda}^{\alpha^+}$. Again, Proposition~\ref{noncyclicity adjoint} provides the conclusion.
\end{proof}

In order to proceed with the remaining cases of non-cyclic composition operators on $\F(\C^d)$, we need the following proposition.

\begin{proposition}\label{Projection and complementability}
Let $\xi\in \C^d$ and $N\in \N$. Let $\PP_N:=\textup{span}\{(z-\xi)^\alpha:~\alpha\in\N^d,~|\alpha|=N\}$ and  $\QQ_N:=\overline{\textup{span}}\{(z-\xi)^\alpha:~\alpha\in\N^d,~|\alpha|\neq N\}$. 
Then, the linear map $P_N$, defined by
\[f\in\F(\C^d)\mapsto P_N(f)(z):= \dfrac{1}{2\pi}\int_0^{2\pi} f(e^{i\theta}(z-\xi)+\xi)e^{-iN\theta}d\theta,\]
is a bounded projection onto $\PP_N$ parallel to $\QQ_N$. In particular, $\F(\C^d)=\PP_N\oplus \QQ_N$.
\end{proposition}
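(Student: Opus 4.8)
The plan is to interpret the integral defining $P_N$ \emph{pointwise} and to reduce everything to an explicit formula for $P_N(f)$ in terms of the Taylor expansion of $f$ at $\xi$. First I would write $\psi_\theta(z):=e^{i\theta}(z-\xi)+\xi$ and expand an arbitrary $f\in\F(\C^d)$ as the everywhere absolutely convergent Taylor series $f(z)=\sum_{\beta\in\N^d}c_\beta(z-\xi)^\beta$ with $c_\beta=\partial^\beta f(\xi)/\beta!$. Since $(\psi_\theta(z)-\xi)^\beta=e^{i\theta|\beta|}(z-\xi)^\beta$, one gets $f(\psi_\theta(z))=\sum_\beta c_\beta e^{i\theta|\beta|}(z-\xi)^\beta$; for fixed $z$ this converges uniformly in $\theta$ (the bound $\sum_\beta|c_\beta|\,|(z-\xi)^\beta|<\infty$ holds because $f$ is entire), so I may exchange sum and integral and use $\frac1{2\pi}\int_0^{2\pi}e^{i\theta(|\beta|-N)}d\theta=\delta_{|\beta|,N}$ to obtain the key identity
\[P_N(f)(z)=\sum_{|\beta|=N}c_\beta(z-\xi)^\beta.\]
In particular $P_N(f)$ is a polynomial lying in $\PP_N$, so $P_N$ maps $\F(\C^d)$ into $\PP_N$.

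Next I would establish boundedness. The cleanest route is to note that each coefficient functional $f\mapsto c_\beta$ is bounded: differentiating the reproducing identity $f(w)=\langle f,k_w\rangle$ in $w$ gives $\partial^\beta f(\xi)=\langle f,\,2^{-|\beta|}z^\beta k_\xi\rangle$, and the function $z\mapsto z^\beta k_\xi(z)$ belongs to $\F(\C^d)$ (a polynomial times $k_\xi$, whose norm is finite since $|k_\xi(z)|^2e^{-|z|^2/2}$ decays like a Gaussian). Hence $|c_\beta|\le \beta!^{-1}2^{-|\beta|}\|z^\beta k_\xi\|\,\|f\|$, and since the identity above expresses $P_N(f)$ as a \emph{finite} sum over $\{|\beta|=N\}$ of these bounded functionals times the fixed vectors $(z-\xi)^\beta$, the operator $P_N$ is finite-rank, hence bounded, with an explicit norm bound.

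It then remains to identify $P_N$ as the projection onto $\PP_N$ parallel to $\QQ_N$. From the key identity, $P_N((z-\xi)^\alpha)=(z-\xi)^\alpha$ if $|\alpha|=N$ and $=0$ otherwise; thus $P_N$ restricts to the identity on $\PP_N$, giving $P_N^2=P_N$ and $\textup{Ran}(P_N)=\PP_N$, while by continuity $\QQ_N\subseteq\ker P_N$. To obtain the decomposition I would argue that $\PP_N+\QQ_N$ contains every $(z-\xi)^\alpha$, hence every polynomial, hence is dense; being the sum of the finite-dimensional space $\PP_N$ and the closed space $\QQ_N$ it is also closed, so $\PP_N+\QQ_N=\F(\C^d)$. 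The sum is direct because any $g\in\PP_N\cap\QQ_N$ satisfies $g=P_N g=0$. Consequently $\F(\C^d)=\PP_N\oplus\QQ_N$, $P_N$ is exactly the projection onto $\PP_N$ along $\QQ_N$, and $\ker P_N=\QQ_N$.

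The main obstacle I anticipate is conceptual rather than computational: for $\xi\neq 0$ the individual symbols $\psi_\theta$ do \emph{not} induce bounded composition operators on $\F(\C^d)$ (the boundedness criterion of Carswell et al.\ fails, as $\psi_\theta$ has unitary linear part $e^{i\theta}I$ but nonzero translation part $(1-e^{i\theta})\xi$ not orthogonal to the range), so $P_N$ cannot be treated as an average of bounded operators and the integrand $f\circ\psi_\theta$ need not lie in $\F(\C^d)$. This is precisely what forces the pointwise interpretation of the integral above. Once the explicit polynomial formula is secured, boundedness and the projection properties follow from finite-rankness and elementary linear algebra, so no delicate estimate is actually required.
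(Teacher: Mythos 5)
Your proof is correct, and although its overall skeleton (a formula for $P_N$, then boundedness, then identification of range and kernel) parallels the paper's, the key technical step is carried out by a genuinely different mechanism. The paper never writes your intrinsic identity at this stage: it computes $P_N(f)$ only for polynomials $f=\sum_\alpha c_\alpha z^\alpha$ expanded in the \emph{origin-centered} monomial basis, via binomial expansion, and proves boundedness by a Cauchy--Schwarz estimate against the explicit norm $\|f\|^2=\sum_\alpha 2^{|\alpha|}|c_\alpha|^2\prod_j\alpha_j!$; the identifications $\textup{Ran}(P_N)=\PP_N$ and $\ker(P_N)=\QQ_N$ are then obtained by $\varepsilon$-approximation with polynomials. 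You instead establish $P_N(f)=\sum_{|\beta|=N}\frac{\partial^\beta f(\xi)}{\beta!}(z-\xi)^\beta$ for \emph{every} $f\in\F(\C^d)$ at once (pointwise interpretation plus term-by-term integration of the Taylor series at $\xi$), and deduce boundedness from that of the derivative-evaluation functionals $f\mapsto\partial^\beta f(\xi)=\langle f,2^{-|\beta|}z^\beta k_\xi\rangle$, which exhibits $P_N$ as finite rank. Your route buys three things: it sidesteps the point, left implicit in the paper, that the bounded extension from the polynomials agrees with the integral formula on all of $\F(\C^d)$; it replaces the approximation arguments for range and kernel by pure linear algebra; and your key identity is essentially Proposition~\ref{computing projection}, which the paper proves separately later, so your argument delivers it for free. (Your observation that $\psi_\theta$ is not the symbol of a bounded composition operator when $\xi\neq 0$, forcing the pointwise reading of the integral, is also accurate and is not made explicit in the paper.) What the paper's route buys is elementarity: it needs only coefficient estimates and never differentiates the reproducing identity, a step you should still justify (differentiation under the integral sign, which is standard); you should also record that the Taylor series of an entire function on $\C^d$ converges absolutely at every point, since that is what licenses your interchange of sum and integral.
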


\begin{proof}

Let us start showing that $P_N$ is bounded. 
Let $p\in \N$ and let $f=\sum_{\alpha\in\N^d}c_\alpha z^\alpha$ be such that $c_\alpha=0$ for all $|\alpha|>p$. 
Recall that \[\|f\|^2=\sum_{|\alpha|\leq p}2^{|\alpha|}|c_\alpha|^2\prod_{j=1}^d\alpha_j!.\]

Now we compute
\begin{align*}
P_N(f)(z)&=\sum_{|\alpha|\leq p}\dfrac{c_\alpha}{2\pi}\int_0^{2\pi } e^{-iN\theta} \prod_{j=1}^d(e^{i\theta}(z_j-\xi_j )+\xi_j )^{\alpha_j}d\theta  \\
& =\sum_{|\alpha|\leq p} \dfrac{c_\alpha}{2\pi}\int_0^{2\pi }\sum_{\beta\leq \alpha} e^{-iN\theta} e^{i\theta|\beta|}\prod_{j=1}^d\begin{pmatrix}
\alpha_j \\ \beta_j
\end{pmatrix}(z_j-\xi_j )^{\beta_j}\xi_j ^{\alpha_j-\beta_j}d\theta  \\
&=\sum_{|\alpha|\leq p}\sum_{\substack{|\beta|=N \\ \beta\leq \alpha} }c_\alpha\prod_{j=1}^d\begin{pmatrix}
\alpha_j \\ \beta_j
\end{pmatrix}(z_j-\xi_j )^{\beta_j}\xi_j ^{\alpha_j-\beta_j}\\
&=\sum_{|\beta|=N}\sum_{\alpha\geq \beta}c_\alpha
(z-\xi)^\beta \xi^{\alpha-\beta}
\prod_{j=1}^d\begin{pmatrix}
\alpha_j \\ \beta_j
\end{pmatrix}.
\end{align*}
Fix any $\beta\in\N^d$ such that $|\beta|=N$. 
Since there are finitely many $d$-tuples of size $N$, in order to prove that $P_N$ is bounded we just need to find $C\geq 0$ such that  
\[\sum_{\alpha\geq \beta}|c_\alpha||\xi^{\alpha-\beta}|\prod_{j=1}^d\begin{pmatrix}
\alpha_j \\ \beta_j
\end{pmatrix} \leq C\| f\|.\]
In fact, considering $M=|\xi|$ and the Cauchy-Schwarz inequality, we have that
\begin{align*}
\sum_{\alpha\geq \beta}|c_\alpha||\xi^{\alpha-\beta}|\prod_{j=1}^d\begin{pmatrix}
\alpha_j \\ \beta_j
\end{pmatrix}&\leq \sum_{\alpha \geq \beta}|c_\alpha| \alpha^\beta M^{|\alpha|}\\
& \leq \left( \sum_{\alpha\geq \beta }|c_\alpha|^2 2^{|\alpha|}\prod_{j=1}^d\alpha_j! \right)^{1/2}\left(\sum_{\alpha\geq \beta }\left(\dfrac{M^2}{2}\right)^{|\alpha|} \prod_{j=1}^{d}\frac{\alpha_j^{2\beta_j}}{\alpha_j!} \right)^{1/2}\leq C\|f\|,
\end{align*}
where $C<\infty$.
Thus, $P_N$ is a bounded linear operator on $\F(\C^d)$. \\

Now, by definition of $P_N$, it easily follows that 

\[P_N((z-\xi)^\alpha)=\begin{cases}
(z-\xi)^\alpha&~\text{if } |\alpha|=N\\
0 &~\text{if } |\alpha|\neq N.
\end{cases}\]
Therefore $\PP_N\subset \textup{Ran}(P_N)$. 
In fact, there is equality. 
Indeed, let $f\in \F(\C^d)$ and $\varepsilon>0$.
Since $(z^\alpha)_{\alpha\in\N^d}$ is an orthogonal basis of $\F(\C^d)$ and $\textup{span}\{z^{\alpha}:~|\alpha|\leq q\}$ coincides with $\textup{span}\{(z-\xi)^{\alpha}:~|\alpha|\leq q\}$, for all $q\in\N$, we know that there is $r\geq N$ and $(c_\alpha)_{|\alpha|\leq r}$ such that

\[ \big\| f- \sum_{|\alpha|\leq r} c_\alpha (z-\xi)^{\alpha}\big \|\leq \dfrac{\varepsilon}{\|P_N\|}. \]
Therefore, 
\[\big \| P_N(f)-\sum_{|\alpha|= N} c_\alpha (z-\xi)^{\alpha}\big \|<\varepsilon,\]
which implies that $\textup{Ran}(P_N)\subset \overline{\PP_N}=\PP_N$ since $\PP_N$ is finite dimensional. 

Now, we show that $\QQ_N=\ker (P_N)$. 
We already know that $\QQ_N\subset \ker (P_N)$. 
Conversely, if $P_N(f)=0$, approximating $f$ by a polynomial $\sum_{|\alpha|\leq r}c_\alpha (z-\xi)^\alpha$ as above, we know that
\[ \big\|\sum_{|\alpha|=N} c_\alpha(z-\xi)^\alpha\big\|\leq \varepsilon,\]
which implies that
\[\big\|f-\sum_{\substack{|\alpha|\leq r \\ |\alpha|\neq N}}c_\alpha(z-\xi)^\alpha\big\| \leq 2\varepsilon.\]
Hence, $f\in \QQ_N$. 
\end{proof}

\begin{proposition}\label{2 jordan blocks}
	Let $\varphi(z):=Az+b$ be such that $C_\varphi$ induces a bounded operator on $\F(\C^d)$. If the canonical Jordan form of $A$ admits two Jordan blocks of size $2$, then $C_\varphi$ is not cyclic.
\end{proposition}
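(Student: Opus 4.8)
The plan is to exhibit an eigenvalue of the adjoint $C_\varphi^*$ whose eigenspace has dimension at least $2$ and then to invoke Proposition~\ref{noncyclicity adjoint}. Since a Jordan block of size $\geq 2$ can only carry an eigenvalue of modulus $<1$ (the eigenvalues of modulus $1$ sit in the unitary diagonal block $U$ of Proposition~\ref{conjugated operator}), and since the non-invertible case is already settled by Proposition~\ref{non-cyclicity 1}, I may assume that the two Jordan blocks of size $2$ carry nonzero eigenvalues $\lambda,\mu\in\D\setminus\{0\}$, possibly with $\lambda=\mu$.

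As in the proof of Proposition~\ref{non-cyclicity 2}, I would first pass to the adjoint. Recall $C_\varphi^*=M_{k_b}C_{\widehat\varphi}$ with $\widehat\varphi(z)=A^*z$, and let $c\in\C^d$ satisfy $(I-A)c=\frac b2$ (Lemma~\ref{lem:range}). The computation performed there shows that the twist $Q\mapsto Q\,e^{\langle z,c\rangle}$ sends any eigenvector $Q$ of $C_{\widehat\varphi}$ associated to an eigenvalue $\nu$ to an eigenvector of $C_\varphi^*$ associated to the same $\nu$, the decisive identity being $Ac+\frac b2=c$. Now $A^*$ also has two Jordan blocks of size $2$, with eigenvalues $\overline\lambda,\overline\mu$, so Proposition~\ref{polynomials of deg 1} applied to the linear map $\widehat\varphi$ furnishes linearly independent linear forms $\ell_1,\ell_2,\ell_3,\ell_4$ with $C_{\widehat\varphi}\ell_1=\overline\lambda\,\ell_1$, $C_{\widehat\varphi}\ell_2=\overline\lambda\,\ell_2+\ell_1$, $C_{\widehat\varphi}\ell_3=\overline\mu\,\ell_3$ and $C_{\widehat\varphi}\ell_4=\overline\mu\,\ell_4+\ell_3$.

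The heart of the argument is then a short degree-$2$ computation producing two independent eigenvectors of $C_{\widehat\varphi}$ sharing the eigenvalue $\overline\lambda\,\overline\mu$. The first is the genuine eigenvector $\ell_1\ell_3$. For the second I would check directly that $\ell_1\ell_4$ and $\ell_2\ell_3$ are rank-$2$ generalized eigenvectors with $(C_{\widehat\varphi}-\overline\lambda\,\overline\mu)(\ell_1\ell_4)=\overline\lambda\,\ell_1\ell_3$ and $(C_{\widehat\varphi}-\overline\lambda\,\overline\mu)(\ell_2\ell_3)=\overline\mu\,\ell_1\ell_3$, so that the combination $\overline\lambda\,\ell_2\ell_3-\overline\mu\,\ell_1\ell_4$ lies in $\ker(C_{\widehat\varphi}-\overline\lambda\,\overline\mu)$ and is therefore a genuine eigenvector, independent from $\ell_1\ell_3$ because $\ell_1,\dots,\ell_4$ are linearly independent forms. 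Twisting, the two functions $\ell_1\ell_3\,e^{\langle z,c\rangle}$ and $\bigl(\overline\lambda\,\ell_2\ell_3-\overline\mu\,\ell_1\ell_4\bigr)e^{\langle z,c\rangle}$ are linearly independent eigenvectors of $C_\varphi^*$ with eigenvalue $\overline\lambda\,\overline\mu$, whence $\dim\ker(C_\varphi^*-\overline\lambda\,\overline\mu\,I)\geq 2$ and Proposition~\ref{noncyclicity adjoint} yields that $C_\varphi$ is not cyclic.

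I expect the only delicate point to be the cancellation that promotes $\overline\lambda\,\ell_2\ell_3-\overline\mu\,\ell_1\ell_4$ from a generalized eigenvector to an honest one: the two cross terms in $\ell_1\ell_3$ created by the nilpotent parts of the two blocks must cancel, which is precisely why the weights $\overline\lambda$ and $\overline\mu$ are assigned as above, and this works uniformly whether or not $\lambda=\mu$. Alternatively, one could argue on $C_\varphi$ itself via Proposition~\ref{Projection and complementability} with $N=2$ and the fixed point $\xi$: since $C_\varphi$ preserves the degree in $z-\xi$, the bounded projection $P_2$ onto $\PP_2$ commutes with $C_\varphi$, so a cyclic vector for $C_\varphi$ would project to a cyclic vector of the finite-dimensional operator $C_\varphi|_{\PP_2}$; the same degree-$2$ computation shows that the eigenvalue $\lambda\mu$ of $C_\varphi|_{\PP_2}$ has geometric multiplicity at least $2$, making $C_\varphi|_{\PP_2}$ derogatory, a contradiction.
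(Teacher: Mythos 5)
Your proof is correct, but it takes a genuinely different route from the paper's. The paper argues by finite-dimensional reduction: using the projection $P_N$ of Proposition~\ref{Projection and complementability} based at a fixed point $\xi$, together with a further projection $R_{n,m}$ onto $Y_{n,m}=\textup{span}\{L_1^kL_2^{n-k}L_3^lL_4^{m-l}\}$, it checks that $R_{n,m}\circ P_N$ commutes with $C_\varphi$ and that, for any $f$, the projected orbit $(R_{n,m}P_NC_\varphi^jf)_j$ lies in the span of $n+m+1$ fixed polynomials (the coefficients being $(\lambda_1^n\lambda_2^m)^jj^r$, $r=0,\dots,n+m$); since $\dim Y_{n,m}=(n+1)(m+1)>n+m+1$ for $n=m=2$, no $f$ can be cyclic. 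You instead extend the adjoint-eigenvalue technique that the paper reserves for Proposition~\ref{non-cyclicity 2}: your cancellation is correct, as indeed $(C_{\widehat\varphi}-\overline\lambda\,\overline\mu)(\overline\lambda\,\ell_2\ell_3-\overline\mu\,\ell_1\ell_4)=\overline\lambda\,\overline\mu\,\ell_1\ell_3-\overline\mu\,\overline\lambda\,\ell_1\ell_3=0$, and the two eigenvectors are linearly independent because products of distinct pairs from a linearly independent family of linear forms are linearly independent quadratics (here you use $\lambda\mu\neq0$, which your reduction via Proposition~\ref{non-cyclicity 1} makes legitimate; the remark about modulus $<1$ is not actually needed). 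The twist by $e^{\langle z,c\rangle}$ intertwines $C_{\widehat\varphi}$ with $C_\varphi^*$ exactly as in the paper's proof of Proposition~\ref{non-cyclicity 2}, and Proposition~\ref{noncyclicity adjoint} concludes. As for what each approach buys: yours is shorter, purely algebraic, and unifies the present proposition with Proposition~\ref{non-cyclicity 2} under a single criterion (a multiple eigenvalue of $C_\varphi^*$); with a slightly longer computation it even covers a Jordan block of size $3$, whose symmetric square is already derogatory. The paper's dimension count, by contrast, requires no explicit eigenvectors, only the fact that the unipotent part makes the orbit coefficients polynomial in $j$, and it is this argument that the paper reuses essentially verbatim for Proposition~\ref{1 jordan block}. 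Your closing alternative (project by $P_2$ and note that $C_\varphi|_{\PP_2}$ is derogatory) is also sound, provided you justify $P_2C_\varphi=C_\varphi P_2$ (which follows from the invariance of $\PP_2$ and $\QQ_2$, as in the paper's identity \eqref{eq: conmutative}), and is in fact closest in spirit to the paper's actual proof, with the dimension count replaced by the geometric-multiplicity criterion.
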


\begin{proof}
 Let us assume that the canonical Jordan form of $A$ admits two Jordan blocks of size $2$ associated to the eigenvalues $\lambda_1$ and $\lambda_2$.
 Let $\xi\in \C^d$ be a fixed point of $\varphi$.
 In particular, thanks to Proposition~\ref{polynomials of deg 1}, there are four linearly independent polynomials of degree one $(L_j)_{j=1}^4\subset \F(\C^d)$  such that, for $j\in \{1,2\}$,
 \begin{align*}
	 C_{\varphi} L_{2j-1} &= \lambda_j L_{2j-1}+L_{2j} \\
	 C_{\varphi} L_{2j} &=  \lambda_j L_{2j}.
 \end{align*}

For $N\geq 0$, consider $\PP_N$, $\QQ_N$ and $P_N$ as in Proposition~\ref{Projection and complementability} associated to $\xi$.
Thanks to Proposition~\ref{Projection and complementability}, $\F(\C^d)=\PP_N\oplus \QQ_N$.

Again thanks to Proposition~\ref{polynomials of deg 1}, we fix $(L_j)_{j=5}^d\subset \F(\C^d)$ be linearly independent polynomials of degree $1$ such that $\{L_j:~j=1,...,d\}$ is a basis of $\PP_1$ and, for each $j=5,...,d$, $C_{\varphi}L_j$ belongs to $\textrm{span}(L_k:~k=5,\dots,d)$.
Observe that $\{L^\alpha:~\alpha\in\N^d,~|\alpha|=N\}$ is a basis of $\PP_N$. \\

Let now $n,m\geq 2$. Set $N=n+m$ and define 
\begin{align*}
Y_{n,m}&= \textup{span}\{L_1^kL_2^{n-k}L_3^lL_4^{m-l}:~k=0,...,n,~l=0,...,m\}\\
Z_{n,m}&= \textup{span}\{L^\alpha:~\alpha\in \N^d,~|\alpha|=N,~L^\alpha\notin Y_{n,m}\}.
\end{align*}

\smallskip

\textbf{Fact.} $\PP_N$, $\QQ_N$, $Y_{n,m}$ and $Z_{n,m}$ are $C_\varphi$-invariant subspaces. \\
Indeed, this easily follows from the values of $C_\varphi L_j$, for $j=1,...,d$.

\medskip

Now, let $R_{n,m}:\PP_N\to Y_{n,m}$ be the linear projection associated to $\PP_N= Y_{n,m}\oplus Z_{n,m}$. 
Let us check that the following expression holds true:
\begin{align}\label{eq: conmutative}
R_{n,m}\circ P_N\circ C_\varphi= C_\varphi \circ R_{n,m}\circ P_N.
\end{align} 
Indeed, let $f\in \textup{span}(L^\alpha:~\alpha\in \N^d)$, namely, $f=\sum_{\alpha\in\N^d}c_\alpha L^\alpha$, where there are only finitely many $c_\alpha$ different from $0$. 
Then, thanks to the previous fact we get:
\begin{align*}
R_{n,m}\circ P_N\circ C_\varphi(f)&=R_{n,m} \sum_{\substack{ |\alpha|=N }}c_\alpha C_\varphi(L^\alpha)\\
&= \sum_{\substack{ |\alpha|=N \\ \alpha_1+\alpha_2=n \\
\alpha_3+\alpha_4=m }}c_\alpha \lambda_1^n\lambda_2^m \left(L_1+\dfrac{1}{\lambda_1}L_2\right)^{\alpha_1} L_2^{\alpha_2}\left (L_3+\dfrac{1}{\lambda_2}L_3\right)^{\alpha_3} L_4^{\alpha_4} \\
&=  C_\varphi \circ R_{n,m}\circ P_N (f).
\end{align*}

We are now ready to prove that $C_\varphi$ is not cyclic. 
Pick any $f\in \mathcal{F}(\C^d)$ and write it $f=\sum_{|\alpha|=N}c_\alpha L^\alpha+g$ with $g\in\mathcal Q_N$. Let us call $c_{k,l}=c_{(k,n-k,l,m-l)\times\{0\}^{d-4}}$. 
Thanks to \eqref{eq: conmutative}, for any $j\geq 0$, we have that
\begin{align*}
R_{n,m}\circ P_N\circ C_\varphi^j (f)&= \sum_{k=0}^{n}\sum_{l=0}^m c_{k,l}C_\varphi^j(L_1^kL_2^{n-k}L_3^lL_4^{m-l})\\
&= \sum_{k=0}^{n}\sum_{l=0}^m c_{k,l}\lambda_1^{jn}\lambda_2^{jm}L_2^{n-k}L_4^{m-l}\left(L_1+\frac{j}{\lambda_1}L_2\right)^k\left(L_3+\dfrac{j}{\lambda_2}L_4\right)^l\\
&=(\lambda_1^n\lambda_2^m)^j\sum_{r=0}^{n+m}j^rf_r,
\end{align*}
where $(f_r)_r\subset Y_{n,m}$ are some fixed polynomials that do not depend of $j$.
Therefore, the dimension of $\textup{span}\{R_{n,m}\circ P_N\circ C_\varphi^j(f): ~j\geq 0\}$ is at most $n+m+1$. It cannot be dense in $R_{n,m}\circ P_N(\F(\C^d))=Y_{n,m}$ which has dimension $(n+1)(m+1)$, for instance if $n=m=2$. 
\end{proof}

Now, we proceed with the last case.
\begin{proposition}\label{1 jordan block}
	Let $\varphi(z):=Az+b$ be such that $C_\varphi$ induces a bounded operator on $\F(\C^d)$. If the canonical Jordan form of $A$ admits a Jordan block of size larger than or equal to $3$, then $C_\varphi$ is not cyclic.
\end{proposition}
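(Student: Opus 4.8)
The plan is to mimic the obstruction (a) from Section~\ref{section 2}, but carried out on a whole Jordan block rather than on a single chain of length three. First I would dispose of the degenerate case: by Proposition~\ref{non-cyclicity 1} we may assume $A$ invertible, and since every eigenvalue of modulus $1$ sits in a size-one block (Proposition~\ref{conjugated operator}), the Jordan block of size $r\ge 3$ is attached to an eigenvalue $\lambda$ with $0<|\lambda|<1$; in particular $\lambda\neq 0$. Let $\xi$ be a fixed point of $\varphi$ and, via Proposition~\ref{polynomials of deg 1}, let $(L_j)_{j=1}^d$ be the degree-one polynomials realizing the Jordan structure of $C_\varphi$, arranged so that the distinguished block occupies the first $r$ indices: $C_\varphi L_1=\lambda L_1$ and $C_\varphi L_j=\lambda L_j+L_{j-1}$ for $2\le j\le r$, while $\textrm{span}(L_{r+1},\dots,L_d)$ is $C_\varphi$-invariant.

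The key point is that I must isolate the homogeneous polynomials built only from the block. Set, for $N\ge 0$,
\[
Y_N=\textrm{span}\{L^\alpha:\ |\alpha|=N,\ \textrm{supp}(\alpha)\subset\{1,\dots,r\}\},\quad Z_N=\textrm{span}\{L^\alpha:\ |\alpha|=N,\ \textrm{supp}(\alpha)\not\subset\{1,\dots,r\}\},
\]
so that $\PP_N=Y_N\oplus Z_N$ with $\PP_N$ as in Proposition~\ref{Projection and complementability} (associated to $\xi$). It is essential to use the entire block: the naive choice $\textrm{span}(L_1,L_2,L_3)$ fails when $r>3$, because $C_\varphi L_4=\lambda L_4+L_3$ drags $L_3$ back in and destroys the invariance of the complement. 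With the full block, however, all four spaces $\PP_N,\QQ_N,Y_N,Z_N$ are $C_\varphi$-invariant: $Y_N$ because $C_\varphi$ maps $L_1,\dots,L_r$ into their own span, and $Z_N$ because any monomial carrying a factor $L_k$ with $k>r$ is sent to a sum of monomials each still carrying such a factor. Let $R_N:\PP_N\to Y_N$ be the projection parallel to $Z_N$ and put $\Pi_N:=R_N\circ P_N$, a bounded projection onto $Y_N$. Exactly as in the proof of Proposition~\ref{2 jordan blocks}, the invariance of the four subspaces gives $\Pi_N\circ C_\varphi=C_\varphi\circ\Pi_N$, whence $\Pi_N C_\varphi^j f=C_\varphi^j(\Pi_N f)\in Y_N$ for every $f$ and every $j$.

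Next comes the dimension count. On $\textrm{span}(L_1,\dots,L_r)$ the operator $C_\varphi$ is the Jordan block $J_r(\lambda)$, so $C_\varphi^j$ acts by $J_r(\lambda)^j$, whose entries are $\binom{j}{k}\lambda^{j-k}$, polynomials in $j$ of degree at most $r-1$. Consequently, for each $\alpha$ with $|\alpha|=N$ supported in $\{1,\dots,r\}$, $C_\varphi^j(L^\alpha)$ equals $\lambda^{jN}$ times a $Y_N$-valued polynomial in $j$ of degree at most $(r-1)N$. Writing $\Pi_N f=\sum_\alpha c_\alpha L^\alpha$ I therefore get $\Pi_N C_\varphi^j f=\lambda^{jN}q(j)$ with $q$ a $Y_N$-valued polynomial of degree at most $(r-1)N$, and since $\lambda\neq 0$ this yields
\[
\dim\textrm{span}\{\Pi_N C_\varphi^j f:\ j\ge 0\}\le (r-1)N+1.
\]

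Finally I would derive the contradiction. Since $\dim Y_N=\binom{N+r-1}{r-1}$ is a polynomial in $N$ of degree $r-1\ge 2$, it exceeds $(r-1)N+1$ once $N$ is large enough; fix such an $N$. If some $f$ were cyclic, then $\textrm{span}(C_\varphi^j f:\ j\ge 0)$ would be dense in $\F(\C^d)$; applying the bounded map $\Pi_N$ and using $\Pi_N(\F(\C^d))=Y_N$, the space $\textrm{span}\{\Pi_N C_\varphi^j f:\ j\ge 0\}$ would be dense in $Y_N$, contradicting the dimension bound (a proper subspace of the finite-dimensional $Y_N$ is closed, hence not dense). Hence $C_\varphi$ has no cyclic vector. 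I expect the main obstacle to be the bookkeeping in the second paragraph — choosing the whole block so that $Z_N$ remains invariant and $\Pi_N$ commutes with $C_\varphi$; once the degree-in-$j$ bound $(r-1)N$ is seen to be beaten by the at-least-quadratic growth of $\dim Y_N$, the conclusion is immediate.
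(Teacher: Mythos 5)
Your proof is correct and follows essentially the same route as the paper: you isolate the homogeneous monomials $L^\alpha$ supported on the full Jordan block via the invariant splitting $\PP_N=Y_N\oplus Z_N$, use the commuting projection $R_N\circ P_N$, and beat the linear-in-$j$ dimension bound $(r-1)N+1$ against $\dim Y_N=\binom{N+r-1}{r-1}$, exactly as in the paper's Proposition~\ref{1 jordan block} (your preliminary reduction to $A$ invertible and $\lambda\neq 0$ is harmless but not actually needed, since the span of $\{\lambda^{jN}q(j):j\geq 0\}$ lies in the span of the coefficients of $q$ in any case).
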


Since we apply a technique that follows the lines of the proof of Proposition~\ref{2 jordan blocks}, we only present a sketch of the proof of Proposition~\ref{1 jordan block}.
\begin{proof}
Let us assume that the canonical Jordan form of $A$ admits a Jordan block of size $p\geq 3$. 
Let $\xi\in\C^d$ be a fixed point of $\varphi$.
Let $\{L_j:~j=1,...,d\}\subset \F(\C^d)$ be a linearly independent set of polynomials of degree $1$ given by Proposition~\ref{polynomials of deg 1} such that $C_{\varphi}L_j=\lambda_1 L_{j}+L_{j+1}$ for all $j=1,...,p-1$ and $C_{\varphi}L_p=\lambda_1 L_p$.\\

Let $N\in \N$ and consider $\PP_N$, $\QQ_N$ and $P_N$ as in Proposition~\ref{Projection and complementability} associated to $\xi$.
Let us now define
\begin{align*}
Y_N&=\textup{span}\Big\{\prod_{j=1}^pL^{k_{j}}_j:~ \sum_{j=1}^pk_j= N\Big\},\\
Z_N&=\textup{span}\Big\{L^\alpha:~ |\alpha|=N,~ L^\alpha\notin Y_N\Big\}.
\end{align*}
It follows that $\PP_N$, $\QQ_N$, $Y_N$ and $Z_N$ are $C_\varphi$-invariant. 
Let us define $R_N:\PP_N\to Y_N$ be the linear bounded projection associated to $\PP_N=Y_N\oplus Z_N$. 
Moreover, as in \eqref{eq: conmutative}, we have that
\begin{align}\label{eq: conmutative 2}
R_N\circ P_N\circ C_\varphi= C_\varphi \circ R_N\circ P_N.
\end{align} 

Now, let us prove that $C_\varphi$ is not cyclic. 
Indeed, pick any $f\in \F(\C^d)$, with $P_N(f)=\sum_{|\alpha|=N}c_\alpha L^\alpha$ and observe that, for any $j \geq p-1$, we have
\begin{align*}
R_N\circ P_N\circ C_\varphi^j (f)&=
\sum_{\substack{\alpha\in \N^p\times\{0\}^{d-p}\\ |\alpha|=N} } c_\alpha C_\varphi^j(L^\alpha)\\
&= \sum_{\substack{\alpha\in \N^p\times\{0\}^{d-p}\\ |\alpha|=N} } c_\alpha \lambda_1^{jN}\prod_{k=1}^p \left(\sum_{l=k}^p \begin{pmatrix}
j\\
l-k
\end{pmatrix} \dfrac{1}{\lambda_1^k}L_l   \right)^{\alpha_k}\\
& = \lambda_1 ^{jN} \sum_{m=0}^{N(p-1)}j^m f_m,
\end{align*}
where $\{f_m:~m=0,...,N(p-1)\}\subset Y_N$ are some fixed polynomials that do not depend of $j$.
Therefore, the dimension of $\textup{span}\{R_N\circ P_N \circ C^j_\varphi(f):~j\geq 0\}$ is at most $N(p-1)+p-1$. 
It cannot be dense in $R_N\circ P_N(\F(\C^d))= Y_N$ which has dimension $\begin{pmatrix}
N+p-1\\ p-1
\end{pmatrix} $, for instance if $N=3$.

\end{proof}

\begin{proof}[Proof of Theorem~\ref{Theorem 1}: Necessary condition] Let us proceed by a contrapositive argument. Observe that Proposition~\ref{non-cyclicity 1}, Proposition~\ref{non-cyclicity 2}, Proposition~\ref{2 jordan blocks} and Proposition~\ref{1 jordan block} cover all the possible cases of the necessary condition of Theorem~\ref{Theorem 1}. 
Thus, the proof of Theorem~\ref{Theorem 1} is now complete.
\end{proof}

\section{Cyclic vectors of compact composition operators}\label{section 5}

In this section we characterize the set of cyclic vectors for compact cyclic composition operators defined on $\F(\C^d)$. 
In order to state the main result of this section, we need to fix some notations.
Let us consider $\varphi(z):=Az+b$ such that $\|A\|<1$ and let $\xi\in \C^d$ be a fixed point of $\varphi$. 
Also, for any $N\in \N$, the subspace $\PP_N$ and the projection $P_N$ are given by Proposition~\ref{Projection and complementability}. 
Set $L=(L_j)_{j=1}^d\subset\F(\C^d)$ be the polynomials of degree $1$ given by Proposition~\ref{polynomials of deg 1} related to $\varphi$ and $\xi$. 
Recall that the set $\{L^\alpha:~\alpha\in \N^d,~|\alpha|=N\}$ is a basis of $\PP_N$. Thus, for any $f\in \F(\C^d)$, by considering the power series of $f$ centered at $\xi$, there is a unique sequence $(f_\alpha)_{\alpha\in \N^d}\subset \C$ such that $f(z)= \sum_{n=0}^\infty\sum_{|\alpha|=n}f_\alpha L^\alpha(z)$ for all $z\in \C^d$.

\begin{theorem}\label{theorem cyclic vectors}
	Let $\varphi(z):=Az+b$ be such that $C_\varphi$ induces a compact cyclic composition operator on $\F(\C^d)$.
	The following assertions hold true.
	\begin{enumerate}
		\item If $A$ is diagonalizable, then $f\in \F(\C^d)$ is a cyclic vector for $C_\varphi$ if and only if $f_\alpha\neq 0$ for all $\alpha\in \N^d$.
		\item If $A$ is not diagonalizable (and therefore its canonical Jordan form admits a block of size $2$), 
		and if $(L_j)_{j=1}^d$ is ordered so that $(L_j)_{j=1}^{d-1}$ are eigenvectors of $C_\varphi$ and $C_\varphi L_d \in \textup{span}(L_{d-1},L_d)$, then $f\in \F(\C^d)$ is a cyclic vector for $C_\varphi$ if and only if $f_\alpha\neq 0$ for all $\alpha\in \N^d$ with $\alpha_{d-1}=0$.
	\end{enumerate}
	
\end{theorem}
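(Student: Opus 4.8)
The plan is to read cyclicity off the action of $C_\varphi$ on the basis $\{L^\alpha:\alpha\in\N^d\}$ of the polynomials. Since $C_\varphi$ is compact and cyclic, Theorem~\ref{Theorem 1} guarantees that $A$ is invertible, that all its eigenvalues lie in $\D\setminus\{0\}$, that its Jordan form is either diagonal or has a single block of size $2$ (involving $L_{d-1},L_d$), and that $\lambda^\alpha\neq 1$ for all $\alpha\in\Z^{\hat d}\setminus\{0\}$. Writing $\Lambda_\alpha:=\big(\prod_{m=1}^{d-2}\lambda_m^{\alpha_m}\big)\lambda_{d-1}^{\alpha_{d-1}+\alpha_d}$ in the Jordan case (and $\Lambda_\alpha:=\lambda^\alpha$ in the diagonalizable case), a direct computation using $C_\varphi L_j=\lambda_j L_j$ for $j\le d-1$ and $C_\varphi L_d=\lambda_{d-1}L_d+L_{d-1}$ gives
\begin{equation*}
C_\varphi^j L^\alpha=\Lambda_\alpha^{\,j}\Big(\prod_{m=1}^{d-2}L_m^{\alpha_m}\Big)\sum_{\beta=0}^{\alpha_d}\binom{\alpha_d}{\beta}\Big(\tfrac{j}{\lambda_{d-1}}\Big)^{\alpha_d-\beta}L_{d-1}^{\alpha_{d-1}+\alpha_d-\beta}L_d^{\beta}.
\end{equation*}
The crucial structural feature is that every monomial $L^{\alpha'}$ appearing on the right satisfies $\alpha'_{d-1}\ge \alpha_{d-1}$ while preserving the \emph{group} $(\alpha_1,\dots,\alpha_{d-2},\alpha_{d-1}+\alpha_d)$: the iterates of $C_\varphi$ never decrease the exponent of $L_{d-1}$.

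For the necessity I would argue by contraposition through a bounded coefficient functional. Given $\alpha_0$ in the index set of the statement (any $\alpha_0$ in case (1); any $\alpha_0$ with $(\alpha_0)_{d-1}=0$ in case (2)), let $\ell$ send $v\in\F(\C^d)$ to the coefficient of $L^{\alpha_0}$ in its expansion; this is bounded, being the composition of the bounded projection $P_{|\alpha_0|}$ of Proposition~\ref{Projection and complementability} with a coordinate functional on the finite dimensional space $\PP_{|\alpha_0|}$. Because $C_\varphi$ preserves homogeneous degree, $\ell(C_\varphi^j f)$ is the $L^{\alpha_0}$-coefficient of $\sum_{|\alpha|=|\alpha_0|}f_\alpha C_\varphi^j L^\alpha$. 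In case (1) this equals $\lambda^{\alpha_0 j}f_{\alpha_0}$; in case (2), since $(\alpha_0)_{d-1}=0$ is the minimal exponent in its group, the displayed formula shows that $L^{\alpha_0}$ can only arise from the source $\alpha=\alpha_0$, whence $\ell(C_\varphi^j f)=\Lambda_{\alpha_0}^{\,j}f_{\alpha_0}$. If $f_{\alpha_0}=0$ then $\ell$ vanishes on the whole orbit, hence on $H:=\overline{\mathrm{span}}\,\mathrm{orb}(C_\varphi,f)$; as $\ell\neq 0$ this forces $H\neq\F(\C^d)$, so $f$ is not cyclic.

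For the sufficiency I would follow the scheme of the proof of the sufficient condition of Theorem~\ref{Theorem 1}, now with $f$ in place of the distinguished vector $h$ and with no unitary part (so the factor $k_w$ and the Kronecker step on unitary arguments disappear). Enumerate the distinct moduli $1=R(0)>R(1)>\cdots\to 0$ of $\{|\Lambda_\alpha|\}$, set $\mcn(n)=\{\alpha:|\Lambda_\alpha|=R(n)\}$ (each finite), and prove by induction on $n$ that $L^\alpha\in H$ for all $\alpha\in\mcn(n)$. Assuming this for levels $<n$, subtract the finitely many known terms from $f$ to obtain $g=\sum_{|\Lambda_\alpha|\le R(n)}f_\alpha L^\alpha\in H$, then run an inner induction on $m=\alpha_{d-1}$ downward from its maximum over $\mcn(n)$: after dividing $C_\varphi^j g$ by $\Lambda^{\,j}j^{m}$ for a reference $\Lambda$ of modulus $R(n)$ and subtracting the already obtained higher-$m$ contributions, the surviving leading term is a combination of $\{L^{\alpha}:\alpha\in\mcn(n),\ \alpha_{d-1}=m\}$, and Lemma~\ref{obtaining L alpha} (applied to the $\hat d$ distinct eigenvalues, whose hypothesis~\eqref{hypothesis} is precisely the eigenvalue condition of Theorem~\ref{Theorem 1}) shows that the closed span of its accumulation points contains each such $L^\alpha$. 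The key point is that the coefficient of a target $L^{\alpha'}$ with $\alpha'_{d-1}=m$ grows like $R(n)^j j^{m}$ with leading coefficient proportional to $f_{\alpha^{\mathrm{top}}}$, where $\alpha^{\mathrm{top}}$ is the top of its group ($\alpha_{d-1}=0$); this is nonzero by the hypothesis of the theorem, which is exactly why only the coefficients with $\alpha_{d-1}=0$ matter. Once every $L^\alpha\in H$, density of the polynomials yields $H=\F(\C^d)$.

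The main obstacle, and the only genuine departure from the proof of Theorem~\ref{Theorem 1}, is controlling the lower-modulus tail: there it was tamed by the explicit decay $d_\alpha=\rho_{|\alpha|}$, which is unavailable for an arbitrary $f$. I would instead invoke the holomorphic functional calculus of the compact operator $C_\varphi$: letting $\Pi_n$ be the Riesz projection onto the finitely many eigenvalues of modulus $\ge R(n)$, the vector $g'=\sum_{|\Lambda_\alpha|<R(n)}f_\alpha L^\alpha=(I-\Pi_n)g$ lies in a spectral subspace on which the spectral radius of $C_\varphi$ equals $R(n+1)<R(n)$, so $\|C_\varphi^j g'\|^{1/j}\to R(n+1)$ and $C_\varphi^j g'/(\Lambda^{\,j}j^{m})\to 0$; the tail therefore does not interfere with the limits above. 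The one delicate verification is that $\Pi_n$ coincides with the coordinate projection onto $\bigoplus_{|\Lambda_\alpha|\ge R(n)}\mathrm{span}(L^\alpha)$, i.e.\ that the generalized eigenspaces of $C_\varphi$ are exactly the groups $\mathrm{span}\{L^\alpha:\Lambda_\alpha=\mu\}$; this holds because these spaces are $C_\varphi$-invariant, $C_\varphi-\mu$ is nilpotent on each of them, and together they span the dense subspace of polynomials. Finally, case (1) is the specialization in which there are no Jordan terms, every $\alpha$ is its own group-top, and the inner induction on $m$ is vacuous.
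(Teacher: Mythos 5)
Your proposal is correct and follows essentially the same route as the paper: the necessity half (a bounded $L^{\alpha_0}$-coefficient functional that annihilates the whole orbit when $f_{\alpha_0}=0$, because $\alpha_{d-1}=0$ indices can only feed themselves under $C_\varphi^j$) is exactly the paper's argument via $P_{|\widehat\alpha|}$, and the sufficiency half uses the same outer induction on the modulus levels $R(n)$, the same inner downward induction on $m=\alpha_{d-1}$, and the same appeal to Lemma~\ref{obtaining L alpha} applied to the $\hat d$ geometrically-repeated eigenvalues, with the nonvanishing of $f_{\alpha-\alpha_{d-1}e(d-1)+\alpha_{d-1}e(d)}$ supplying the exact degree $\alpha_{d-1}$ of the polynomial-in-$j$ coefficients. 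The one point where you consciously diverge is the tail control. The paper builds the coordinate projections $\Xi_N$ by hand, proves their boundedness from Propositions~\ref{Projection and complementability} and~\ref{computing projection}, and re-derives the spectrum of $C_\varphi|_{Y_N}$ (Proposition~\ref{facts of xi and y}, adapting Propositions~\ref{eigenvector} and~\ref{spectrum compact}) before invoking the spectral radius formula; you instead take the Riesz projections $\Pi_n$ of the compact operator, which are bounded and spectrum-splitting for free. Your "delicate verification" that the spectral subspaces coincide with the coordinate groups does close up: $C_\varphi$ is triangular on each finite-dimensional group $\mathrm{span}\{L^\alpha:\Lambda_\alpha=\mu\}$ with constant diagonal $\mu$, so $(C_\varphi-\mu)$ is nilpotent there, and the restriction property of Riesz projections together with density of the polynomials identifies both the ranges and the kernels. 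Note, however, that to write $(I-\Pi_n)g=\sum_{|\Lambda_\alpha|<R(n)}f_\alpha L^\alpha$ you still need the continuity of the $L$-coefficient functionals (equivalently, the boundedness of the paper's finite-rank coordinate projections), so the functional-calculus shortcut does not entirely dispense with Propositions~\ref{Projection and complementability} and~\ref{computing projection}; what it buys is that the spectrum computation for the restricted operator comes from general Riesz theory rather than from a separate argument.
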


In order to prove Theorem~\ref{theorem cyclic vectors} we need several intermediate results.
\begin{proposition}\label{computing projection}
	For any $N\in \N$ and any $f\in \F(\C^d)$, $P_N(f)= \sum_{|\alpha|=N}f_\alpha L^\alpha$.
\end{proposition}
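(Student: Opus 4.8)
The plan is to reduce the statement to the case of polynomials, where it is immediate, and then pass to the general case by a density and continuity argument. First I would record how $P_N$ acts on the basis $\{L^\alpha\}$. Since each $L_j$ is a linear form in $z-\xi$ (by Proposition~\ref{polynomials of deg 1} one has $L_j=v_j(\cdot-\xi)$ with $v_j$ linear and $L_j(\xi)=0$), it is homogeneous of degree $1$ in $z-\xi$, so $L_j(e^{i\theta}(z-\xi)+\xi)=e^{i\theta}L_j(z)$ and hence $L^\alpha(e^{i\theta}(z-\xi)+\xi)=e^{i\theta|\alpha|}L^\alpha(z)$. Inserting this into the integral defining $P_N$ in Proposition~\ref{Projection and complementability} and using $\frac1{2\pi}\int_0^{2\pi}e^{i\theta(|\alpha|-N)}\,d\theta=\delta_{|\alpha|,N}$ yields $P_N(L^\alpha)=L^\alpha$ when $|\alpha|=N$ and $P_N(L^\alpha)=0$ otherwise (as already observed in Proposition~\ref{Projection and complementability}). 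Consequently, for any polynomial $q=\sum_\alpha q_\alpha L^\alpha$ (a finite sum) we have $P_N(q)=\sum_{|\alpha|=N}q_\alpha L^\alpha$, and for such a $q$ the scalars $q_\alpha$ are exactly the coefficients of its (finite) expansion centred at $\xi$ in the basis $\{L^\alpha\}$.

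Next I would approximate. Because $\{z^\alpha:\alpha\in\N^d\}$ is an orthogonal basis of $\F(\C^d)$, the polynomials are dense, so I can choose polynomials $q^{(k)}\to f$ in $\F(\C^d)$. Boundedness of $P_N$ (Proposition~\ref{Projection and complementability}) gives $P_N(q^{(k)})\to P_N(f)$, and by the previous paragraph $P_N(q^{(k)})=\sum_{|\alpha|=N}q^{(k)}_\alpha L^\alpha$, where $q^{(k)}_\alpha$ is the degree-$N$ coefficient of $q^{(k)}$ in the $\{L^\alpha\}$-basis.

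The hard part, and the only genuinely nontrivial point, is to identify $\lim_k q^{(k)}_\alpha$ with the coefficient $f_\alpha$ of $f$. For this I would exploit that $\F(\C^d)$ is a reproducing kernel Hilbert space: from $|g(z)|=|\langle g,k_z\rangle|\le\|g\|\,\|k_z\|$ with $\|k_z\|=\exp(|z|^2/4)$, convergence in $\F(\C^d)$ forces uniform convergence on compact subsets of $\C^d$. Thus $q^{(k)}\to f$ locally uniformly, and by the Cauchy/Weierstrass theorem every Taylor coefficient of $q^{(k)}$ at $\xi$ converges to the corresponding one of $f$. Since within each homogeneous degree the $\{L^\alpha\}$-coefficients are obtained from the monomial Taylor coefficients by a fixed invertible linear change (induced by the linear forms $v_j$), it follows that $q^{(k)}_\alpha\to f_\alpha$ for each of the finitely many $\alpha$ with $|\alpha|=N$. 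Combining, $P_N(f)=\lim_k\sum_{|\alpha|=N}q^{(k)}_\alpha L^\alpha=\sum_{|\alpha|=N}f_\alpha L^\alpha$, which is the claimed identity.
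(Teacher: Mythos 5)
Your proof is correct and follows essentially the same route as the paper's (which is only a sketch): reduce to polynomials via density, use boundedness of $P_N$ and its action on the basis $\{L^\alpha\}$, and identify the limiting coefficients through the fact that norm convergence in $\F(\C^d)$ forces locally uniform convergence of the functions and their derivatives, hence convergence of Taylor coefficients at $\xi$. Your write-up simply fills in the details (the RKHS estimate and the invertible change of basis between $\{(z-\xi)^\alpha\}$ and $\{L^\alpha\}$) that the paper leaves implicit.
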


\begin{proof}
	This easily follows from the following facts:
	\begin{itemize}
		\item $P_N(\mathcal F(\C^d))=\PP_N=\textup{span}\{(z-\xi)^\alpha:~|\alpha|=N\}=\{L^\alpha:~|\alpha|=N\}$,
		\item $(I-P_N)(\mathcal F(\C^d))=\overline{\textup{span}}\{(z-\xi)^\alpha:~|\alpha|\neq N\}$ and 
		\item if $(f_n)_n\subset \F(\C^d)$ converges to $f\in \F(\C^d)$, then for any $\alpha\in\N^d$, the $\alpha$-partial derivative of $(f_n)_n$ converges to the $\alpha$-partial derivative of $f$ for the locally uniform convergence topology.
	\end{itemize}
\end{proof}

From now on, let us further assume that the canonical Jordan form of $A$ admits only one Jordan block whose size is exactly $2$. 
Set $\lambda=(\lambda_j)_{j=1}^{d}\subset \mathds{D}^d\setminus\{\{0\}^d\}$ such that the first $d-1$ elements of $\lambda$ are the eigenvalues
of $A$, $\lambda_d=\lambda_{d-1}$ and $C_\varphi L_{d}=\lambda_{d-1}L_d+L_{d-1}$.
\begin{proposition}\label{eigenvector}
	Let $f\in\F(\C^d)$ be an eigenvector of $C_\varphi$. Then, $f_\alpha=0$ for any $\alpha\in \N^d$ such that $\alpha_d\neq 0$.  
\end{proposition}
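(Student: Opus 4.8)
The plan is to reduce the statement to a single homogeneous component and then exploit the multiplicativity of $C_\varphi$ together with the cyclicity hypothesis on the eigenvalues. First I would observe that, since $\PP_N$ and $\QQ_N$ are both $C_\varphi$-invariant and $\F(\C^d)=\PP_N\oplus\QQ_N$ (Proposition~\ref{Projection and complementability}), the projection $P_N$ commutes with $C_\varphi$. Hence, if $C_\varphi f=\mu f$, then each $P_N f$ is again an eigenvector for the eigenvalue $\mu$, and by Proposition~\ref{computing projection} we have $P_N f=\sum_{|\alpha|=N}f_\alpha L^\alpha$. It therefore suffices to fix $N$, assume $g:=P_N f\neq 0$ satisfies $C_\varphi g=\mu g$, and prove that $f_\alpha=0$ whenever $\alpha_d\geq 1$. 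This turns the claim into a finite-dimensional, homogeneous problem inside $\PP_N$.

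Next I would use that $C_\varphi$ is multiplicative and that $L_1,\dots,L_{d-1}$ are eigenvectors while $C_\varphi L_d=\lambda_{d-1}L_d+L_{d-1}$. Writing $g=\sum_{k=0}^m L_d^k\,g_k$, where each $g_k$ is homogeneous of degree $N-k$ in $L_1,\dots,L_{d-1}$ and $g_m\neq 0$, and setting $\tilde g_k:=C_\varphi g_k$, a direct expansion of $C_\varphi g=\sum_{k}(\lambda_{d-1}L_d+L_{d-1})^k\tilde g_k$ gives the coefficient of $L_d^m$. Comparing it with the coefficient of $L_d^m$ in $\mu g$ yields
\begin{equation*}
\lambda_{d-1}^m\,\tilde g_m=\mu\,g_m .
\end{equation*}
Since $g_m=\sum_\delta c_\delta L^\delta$ (with $\delta\in\N^{d-1}$) and $\tilde g_m=\sum_\delta c_\delta\lambda^\delta L^\delta$, comparing coefficients shows $\mu=\lambda_{d-1}^m\lambda^\delta$ for every $\delta$ with $c_\delta\neq 0$. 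If two such multi-indices $\delta\neq\delta'$ occurred we would get $\lambda^{\delta-\delta'}=1$ with $\delta-\delta'\in\Z^{d-1}\setminus\{0\}$, contradicting the cyclicity hypothesis (third condition of Theorem~\ref{Theorem 1}). Hence $g_m$ is a single monomial, $g_m=c\,L^{\delta^\ast}$ with $c\neq 0$, and $\mu=\lambda_{d-1}^m\lambda^{\delta^\ast}$.

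The decisive step is then the comparison of the coefficient of $L_d^{m-1}$, which receives contributions only from $k=m-1$ and $k=m$, giving
\begin{equation*}
\lambda_{d-1}^{m-1}\tilde g_{m-1}+m\,\lambda_{d-1}^{m-1}L_{d-1}\tilde g_m=\mu\,g_{m-1}.
\end{equation*}
Writing $g_{m-1}=\sum_\eta b_\eta L^\eta$ and using $\tilde g_m=c\lambda^{\delta^\ast}L^{\delta^\ast}$ together with $\mu=\lambda_{d-1}^m\lambda^{\delta^\ast}$, I would compare coefficients of each $L^\eta$. For $\eta\neq\delta^\ast+e_{d-1}$ (where $e_{d-1}$ denotes the multi-index with a single $1$ in position $d-1$) one gets $b_\eta(\lambda^\eta-\lambda_{d-1}\lambda^{\delta^\ast})=0$, and since $\lambda^{\delta^\ast+e_{d-1}}=\lambda_{d-1}\lambda^{\delta^\ast}$, the cyclicity hypothesis again forces $b_\eta=0$. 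For the remaining index $\eta=\delta^\ast+e_{d-1}$ the two diagonal terms cancel exactly (precisely because $\mu=\lambda_{d-1}^m\lambda^{\delta^\ast}$), leaving
\begin{equation*}
m\,\lambda_{d-1}^{m-1}\,c\,\lambda^{\delta^\ast}=0 .
\end{equation*}
As $\lambda_{d-1}\neq 0$, $c\neq 0$ and $\lambda^{\delta^\ast}\neq 0$, this is impossible when $m\geq 1$. Therefore $m=0$, i.e. $g$ involves no positive power of $L_d$, which is exactly $f_\alpha=0$ for $\alpha_d\geq 1$. I expect the main obstacle to be this last cancellation: one must verify that the genuinely Jordan contribution coming from $C_\varphi L_d=\lambda_{d-1}L_d+L_{d-1}$ survives while every other term is annihilated by the resonance-free condition $\lambda^\beta\neq 1$. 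The two-step bookkeeping on the powers $L_d^m$ and $L_d^{m-1}$, and the exact matching $\mu=\lambda_{d-1}^m\lambda^{\delta^\ast}$, is where all the cyclicity hypotheses are genuinely used.
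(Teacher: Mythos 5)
Your proof runs on the same engine as the paper's: isolate the highest power of $L_d$ occurring in the eigenvector, compare the two top levels in $L_d$, and observe that the Jordan contribution coming from $C_\varphi L_d=\lambda_{d-1}L_d+L_{d-1}$ cannot be cancelled, which forces that highest power to be $0$. Your preliminary reduction to a homogeneous component is sound ($\PP_N$ and $\QQ_N$ are $C_\varphi$-invariant because $\varphi(z)=A(z-\xi)+\xi$, so $P_N$ commutes with $C_\varphi$; the paper uses the same invariance in Section~\ref{section 4}), and the paper simply skips it by choosing $\gamma$ with $f_\gamma\neq 0$ and $\gamma_d$ maximal inside a fixed degree and fixed first $d-2$ exponents, then comparing the coefficients of $L^\gamma$ and $L^{\gamma+e(d-1)-e(d)}$ — exactly your two-level comparison.

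There is, however, one genuine defect: you invoke the third condition of Theorem~\ref{Theorem 1} (the resonance-free condition $\lambda^\beta\neq 1$) twice, once to force $g_m$ to be a single monomial and once to conclude $b_\eta=0$ for $\eta\neq\delta^\ast+e_{d-1}$. That condition is not among the hypotheses of Proposition~\ref{eigenvector}: the standing assumptions in Section~\ref{section 5} at that point are only $\|A\|<1$, nonzero eigenvalues, and one Jordan block of size exactly $2$; cyclicity is assumed only in Theorem~\ref{theorem cyclic vectors} itself. The paper's proof uses no such condition, and the remark after Proposition~\ref{spectrum compact}, which describes $\ker(C_\varphi-\lambda^\alpha Id)$ as spanned by \emph{all} $L^\beta$ with $\lambda^{\alpha-\beta}=1$, makes clear the statement is intended to hold when resonances are present. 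Fortunately, both of your invocations of that condition are superfluous. The level-$m$ identity $\lambda_{d-1}^m\tilde g_m=\mu g_m$ already gives $\mu=\lambda_{d-1}^m\lambda^{\delta}$ for \emph{every} $\delta$ with $c_\delta\neq 0$; fix any one such $\delta$. In the level-$(m-1)$ identity, extract only the coefficient of $L^{\delta+e_{d-1}}$: since $\lambda_{d-1}^{m-1}\lambda^{\delta+e_{d-1}}=\lambda_{d-1}^m\lambda^{\delta}=\mu$, the two diagonal terms cancel regardless of whether $g_m$ is a monomial and regardless of the values of the other $b_\eta$, leaving $m\,\lambda_{d-1}^{m-1}c_\delta\lambda^{\delta}=0$ and hence $m=0$. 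With those two steps deleted, your argument proves the proposition under exactly its stated hypotheses and essentially coincides with the paper's proof.
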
 

\begin{proof}	
	Let us proceed towards a contradiction. 
	Let $f\in \F(\C^d)$ be an eigenvector of $C_\varphi $ such that there is $\widehat{\gamma}\in \N^d$ with $\widehat{\gamma}_d\neq 0$ and $f_{\widehat{\gamma}}\neq 0$.
	Let us denote by $\Lambda\in \C$ the eigenvalue associated to $f$.
	Let $\gamma\in\N^d$ be such that $\gamma_j=\widehat{\gamma}_j$ for all $j=1,...,d-2$, $|\gamma|=|\widehat{\gamma}|$, $f_\gamma\neq 0$ and $\gamma_d$ is maximal. \\
	Now, notice that for any $z\in \C^d$
	\[C_\varphi f(z)= \sum_{n=0}^\infty\sum_{\substack{\alpha\in \N^d\\ |\alpha|=n}} f_\alpha L^\alpha(\varphi(z))=\sum_{n=0}^{+\infty} \sum_{\substack{\alpha\in \N^d\\ |\alpha|=n} } f_\alpha  \left(\lambda_{d-1} L_{d}(z) + L_{d-1}(z) \right)^{\alpha_d}\prod_{j=1}^{d-1} \lambda_j^{\alpha_j} L_j^{\alpha_j}(z) .\]
	Recalling that $\Lambda$ is the eigenvalue associated to $f$, for any $z\in \C^d$ we have that 
	\begin{align}\label {eq: prop eigenvector}
	C_\varphi f (z)= \Lambda  \sum_{n=0}^\infty\sum_{\substack{\alpha\in \N^d\\ |\alpha|=n}} f_\alpha L^\alpha(z).
	\end{align}
	Since for every function $g\in \F(\C^d)$ there is a unique sequence $(g_\alpha)_{\alpha\in \N^d}$ such that $g=\sum_{n=0}^{+\infty}\sum_{|\alpha|=n}g_\alpha L^\alpha$, the coefficients of both sides of \eqref{eq: prop eigenvector} coincide. 
	Therefore, regarding the coefficients that multiply $L^\gamma$ and $L^{\gamma+e(d-1)-e(d)}$ we get that
	
	\begin{align*}
	\lambda^\gamma f_\gamma  = &\Lambda f_\gamma\\
	\lambda^{\gamma-e(d)}\gamma_d f_\gamma+\lambda^\gamma f_{\gamma+e_{d-1}-e_d} = & \Lambda f_{\gamma+e_{d-1}-e_d}.
	\end{align*}
	Thus, since $f_\gamma\neq 0$, it follows that $\Lambda =\lambda^\gamma\neq 0$. However, since $\gamma_d\neq 0$ and
	$\lambda^{\gamma-e(d)}\neq 0$, the second equality gives us that $f_\gamma=0$ which is a contradiction.
	
\end{proof}
As a direct consequence of Proposition~\ref{eigenvector} we get:

\begin{proposition}\label{spectrum compact}
	The spectrum of $C_\varphi$ is $\sigma(C_\varphi)=\{\lambda^\alpha:~\alpha\in \N^{d-1}\times\{0\}\}\cup\{0\}$.
\end{proposition}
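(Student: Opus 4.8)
The plan is to combine the spectral theory of compact operators with the rigidity of eigenvectors established in Proposition~\ref{eigenvector}. Since $C_\varphi$ is compact and $\F(\C^d)$ is infinite-dimensional, the Riesz--Schauder theory gives that $\sigma(C_\varphi)$ is a closed set consisting of $0$ together with the eigenvalues of $C_\varphi$, that $0\in\sigma(C_\varphi)$ because the space is infinite-dimensional, and that every nonzero spectral value is an eigenvalue. Thus the whole problem reduces to identifying the point spectrum $\sigma_p(C_\varphi)$, and the statement will follow once we show $\sigma_p(C_\varphi)=\{\lambda^\alpha:\alpha\in\N^{d-1}\times\{0\}\}$.

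For the inclusion $\{\lambda^\alpha:\alpha\in\N^{d-1}\times\{0\}\}\subseteq\sigma_p(C_\varphi)$, I would simply exhibit the eigenvectors. Recall that in the present setting $L_1,\dots,L_{d-1}$ are eigenvectors of $C_\varphi$ with associated eigenvalues $\lambda_1,\dots,\lambda_{d-1}$. Hence, for any $\alpha\in\N^{d-1}\times\{0\}$, the product $L^\alpha=\prod_{j=1}^{d-1}L_j^{\alpha_j}$ satisfies $C_\varphi L^\alpha=\lambda^\alpha L^\alpha$, so $\lambda^\alpha$ is an eigenvalue of $C_\varphi$.

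For the reverse inclusion, let $\Lambda$ be an arbitrary eigenvalue with eigenvector $f\neq 0$, expanded as $f=\sum_{n\geq 0}\sum_{|\alpha|=n}f_\alpha L^\alpha$. By Proposition~\ref{eigenvector}, $f_\alpha=0$ whenever $\alpha_d\neq 0$, so $f$ is a nonzero linear combination of the $L^\alpha$ with $\alpha_d=0$. Applying $C_\varphi$ and using $C_\varphi L^\alpha=\lambda^\alpha L^\alpha$ for those $\alpha$, the uniqueness of the coefficients forces $\Lambda f_\alpha=\lambda^\alpha f_\alpha$ for every $\alpha$ with $\alpha_d=0$; choosing any $\gamma$ with $\gamma_d=0$ and $f_\gamma\neq 0$ yields $\Lambda=\lambda^\gamma$. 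This proves $\sigma_p(C_\varphi)=\{\lambda^\alpha:\alpha\in\N^{d-1}\times\{0\}\}$, and adjoining $0$ gives the claimed description of $\sigma(C_\varphi)$.

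I do not expect any serious obstacle: the proposition is essentially a direct corollary of Proposition~\ref{eigenvector}. The only points requiring care are bookkeeping ones. First, one must invoke the correct spectral-theoretic fact that the nonzero part of the spectrum of a compact operator consists solely of eigenvalues, so that no spectral value is overlooked by computing only the point spectrum. Second, one should confirm that the displayed set is already closed, so that it truly equals the (closed) spectrum and not merely a dense subset: since each $|\lambda_j|<1$, one has $|\lambda^\alpha|\to 0$ as $|\alpha|\to\infty$, whence $0$ is the only accumulation point of $\{\lambda^\alpha:\alpha\in\N^{d-1}\times\{0\}\}$ and it is included in the set.
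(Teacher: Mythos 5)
Your proposal is correct and follows essentially the same route as the paper: compactness reduces the problem to the point spectrum, Proposition~\ref{eigenvector} (together with the uniqueness of the coefficients $f_\alpha$) forces every eigenvalue to equal $\lambda^\gamma$ for some $\gamma\in\N^{d-1}\times\{0\}$, and the monomials $L^\alpha$ with $\alpha_d=0$ provide the converse inclusion. Your extra remarks (that $0$ lies in the spectrum by infinite-dimensionality and that the displayed set is closed) are correct but minor bookkeeping beyond the paper's argument.
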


\begin{proof}
	Since $C_\varphi$ is a compact operator, we know that $\sigma(C_\varphi)= \sigma_p(C_\varphi)\cup\{0\}$.
It follows from the proof of Proposition~\ref{eigenvector} that the eigenvalues of $C_\varphi$ are of the form $\lambda^\alpha$, with $\alpha\in \N^{d-1}\times \{0\}$.
	Conversely, we have that $C_\varphi L^\alpha=\lambda^\alpha L^\alpha$ for all $\alpha\in \N^{d-1}\times \{0\}$.
\end{proof}
Observe that, in fact, we have shown that for any $\alpha\in \N^{d-1}\times \{0\}$, $\ker(C_\varphi-\lambda^\alpha Id)=\textup{span}\{L^\beta:~\beta\in\N^{d-1}\times\{0\},~\lambda^{\alpha-\beta}=1\}$.\\

Let us now fix an enumeration $(\beta(n))_n$ of $\N^{d-1}\times\{0\}$ such that the sequence $(|\lambda^{\beta(n)}|)_n$ is nonincreasing. 
Also, consider $(R(n))_n\subset \R$ a strictly decreasing enumeration of the set $\{|\lambda^{\beta(n)}|:~n\in\N \}$.
For any $n\in \N$ let us consider the set 
\[\II(n):=\{\alpha\in \N^d:~ \beta(n)_{d-1}=\alpha_{d-1}+\alpha_d,~\alpha_j=\beta(n)_j~\text{for }j=1,...,d-2\}\]
which is a finite subset of $\N^d$.
Let $N\in \N$ and denote by $Y_N$ the subspace of $\F(\C^d)$ defined by

\[Y_N:=\Big\{f\in \F(\C^d):~ f(z)=\sum_{\alpha\in \N^d\setminus \cup_{n=0}^N \II(n)}f_\alpha L^{\alpha}(z)\Big\}.\]
Also, let us denote by $\Xi_N$ the linear projection on $\F(\C^d)$ defined by

\[f\in \F(\C^d)\mapsto \Xi_N(f)(z):= \sum_{\alpha\in \N^d\setminus \cup_{n=0}^N \II(n)} f_\alpha L^{\alpha}(z),~\text{for all }z\in \C^d.\] 

In the following proposition we collect some facts related to $Y_N$ and $\Xi_N$.
\begin{proposition}\label{facts of xi and y}
	Let $N\in \N$. Then:
	\begin{enumerate}[(a)]
		\item $\Xi_N$ is a bounded projection onto $Y_N$. In particular, $Y_N$ is a closed subspace.
		\item $\Xi_N$ and $C_\varphi$ commute.
		\item $Y_N$ is an invariant subspace of $C_\varphi$.
		\item $\sigma(C_\varphi|_{Y_N})= \{\lambda^{\beta(n)}:~n>N\}\cup \{0\}$.
		
	\end{enumerate}
\end{proposition}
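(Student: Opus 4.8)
The plan is to extract a single structural fact that makes all four assertions fall into place: the finite sets $\II(n)$ form a partition of $\N^d$, and each ``block'' $B_n:=\textup{span}\{L^\alpha:\alpha\in\II(n)\}$ is simultaneously contained in $\PP_{|\beta(n)|}$ and invariant under $C_\varphi$. Indeed, for $\alpha\in\II(n)$ one has $\alpha_j=\beta(n)_j$ for $j\le d-2$ and $\alpha_{d-1}+\alpha_d=\beta(n)_{d-1}$, whence $|\alpha|=|\beta(n)|$ (so $B_n\subseteq\PP_{|\beta(n)|}$) and, since $\lambda_d=\lambda_{d-1}$, $\lambda^\alpha=\lambda^{\beta(n)}$ for all $\alpha\in\II(n)$. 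Invariance follows from multiplicativity of $C_\varphi$: expanding $C_\varphi L^\alpha=\big(\prod_{j\le d-1}\lambda_j^{\alpha_j}\big)\big(\prod_{j\le d-1}L_j^{\alpha_j}\big)(\lambda_{d-1}L_d+L_{d-1})^{\alpha_d}$ produces only monomials $L^{\alpha'}$ with $\alpha'_j=\alpha_j$ for $j\le d-2$ and $\alpha'_{d-1}+\alpha'_d=\alpha_{d-1}+\alpha_d$, i.e.\ $\alpha'\in\II(n)$.

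For (a), (b), (c) I would introduce, for each $n\le N$, the operator $Q_n:=\pi_n\circ P_{|\beta(n)|}$, where $P_{|\beta(n)|}$ is the bounded projection onto $\PP_{|\beta(n)|}$ from Proposition~\ref{Projection and complementability} and $\pi_n$ is the (automatically bounded) projection of the finite-dimensional $\PP_{|\beta(n)|}$ onto $B_n$ along $\textup{span}\{L^\gamma:|\gamma|=|\beta(n)|,\ \gamma\notin\II(n)\}$. One checks that $Q_nL^\gamma=L^\gamma$ for $\gamma\in\II(n)$ and $Q_nL^\gamma=0$ otherwise, so the $Q_n$ are mutually annihilating projections and $\Xi_N=I-\sum_{n=0}^N Q_n$ is a bounded projection with range $Y_N$; this gives (a) and the closedness of $Y_N$. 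Assertion (b) reduces to $C_\varphi Q_n=Q_nC_\varphi$: since $\varphi(z)-\xi=A(z-\xi)$ is linear, $C_\varphi$ preserves both $\PP_M$ and $\QQ_M$, hence commutes with $P_{|\beta(n)|}$, while the block computation above shows $C_\varphi$ preserves both $B_n$ and its in-block complement, so it commutes with $\pi_n$ as well. Then (c) is immediate, $C_\varphi Y_N=C_\varphi\textup{Ran}(\Xi_N)=\Xi_N C_\varphi\F(\C^d)\subseteq Y_N$.

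The substantive part is (d). Being the restriction of the compact $C_\varphi$ to the closed invariant subspace $Y_N$, the operator $C_\varphi|_{Y_N}$ is compact, and $Y_N$ is infinite-dimensional (it contains $L^{\beta(n)}$ for every $n>N$), so $\sigma(C_\varphi|_{Y_N})=\{0\}\cup\sigma_p(C_\varphi|_{Y_N})$ with $0$ present. For the inclusion $\supseteq$ I would exhibit eigenvectors: because $\beta(n)_d=0$, one has $\beta(n)\in\II(m)\iff\beta(m)=\beta(n)\iff m=n$, so for $n>N$ the index $\beta(n)\notin\bigcup_{m\le N}\II(m)$, whence $L^{\beta(n)}\in Y_N$ is an eigenvector of eigenvalue $\lambda^{\beta(n)}$. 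For $\subseteq$, any nonzero $\mu\in\sigma(C_\varphi|_{Y_N})$ is an eigenvalue of $C_\varphi$, so $\mu=\lambda^{\beta(k)}$ for some $k$, and the remark after Proposition~\ref{spectrum compact} gives $\ker(C_\varphi-\mu)=\textup{span}\{L^{\beta(k)}:\lambda^{\beta(k)}=\mu\}$; as $\Xi_N$ annihilates every $L^{\beta(k)}$ with $k\le N$, a nonzero eigenvector lying in $Y_N$ forces some index $n>N$ with $\lambda^{\beta(n)}=\mu$.

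I expect the main obstacle to be organizational rather than computational: making boundedness of $\Xi_N$ and its commutation with $C_\varphi$ rigorous without hand-waving about infinite direct sums. Factoring each block projection through the already-bounded $P_{|\beta(n)|}$ and then reducing to finite-dimensional linear algebra on $\PP_{|\beta(n)|}$ is the device that circumvents this. The one genuinely delicate point in (d) is that the enumeration $(\beta(n))_n$ may repeat a spectral value $\lambda^{\beta(n)}$ across several indices, so the argument must track which \emph{indices}, not merely which values, are deleted; this is precisely what the explicit description of $\ker(C_\varphi-\mu)$ makes possible.
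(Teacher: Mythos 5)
Your proposal is correct and follows essentially the same route as the paper's (quite terse) proof: boundedness of $\Xi_N$ by writing $I-\Xi_N$ as a finite-rank operator factored through the projections $P_{|\beta(n)|}$ of Proposition~\ref{Projection and complementability} together with Proposition~\ref{computing projection}, commutation via the $C_\varphi$-invariance of the blocks $\textup{span}\{L^\alpha:\alpha\in\II(n)\}$, and the spectrum via compactness of the restriction plus the eigenvalue/eigenvector description from Proposition~\ref{spectrum compact} and the remark following it. Your write-up simply makes explicit the details (in particular the index-versus-value bookkeeping in (d)) that the paper dismisses as ``straightforward modifications.''
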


\begin{proof}
	(a): Observe that $I-\Xi_N$ is the linear operator defined by $(I-\Xi_N)f= \sum_{\alpha\in \bigcup_{n=0}^N \II(n)} f_\alpha L^{\alpha}$ for all $f\in \F(\C^d)$. 
	Therefore, since $\II(n)$ is a finite subset of $\N^d$ for each $n\in\N$, by Proposition~\ref{Projection and complementability} and Proposition~\ref{computing projection} we get that $I-\Xi_N$ is bounded. 
	Thus, $\Xi_N$ is bounded as well.\\
	(b): It easily follows from the fact that $I-\Xi_N$ and $C_\varphi$ commute.\\
	(c): It directly follows from (a) and (b).\\
	(d): By (c), $C_\varphi|_{Y_N}\in \mathcal{L}(Y_N)$ is a compact operator. Followed by some straightforward modifications, the argument presented in the proof of Proposition~\ref{spectrum compact} can be used to show that $\sigma(C_\varphi|_{Y_N})= \{\lambda^{\alpha(n)}:~n>N\}\cup \{0\}$.
\end{proof}

Now we are in shape to prove Theorem~\ref{theorem cyclic vectors}.
\begin{proof}[Proof of Theorem~\ref{theorem cyclic vectors}]
	Let us assume that $A$ is non-diagonalizable. 
	The case where $A$ is diagonalizable is simpler and the argument to prove this theorem follows the same line as the presented proof. 
	Since $A$ is compact and cyclic, by Theorem~\ref{Theorem 1}, we know that $\|A\|<1$, that $A$ is invertible and that the canonical Jordan form of $A$ admits a Jordan block of size exactly $2$.\\
	
	Let $f\in \F(\C^d)$ be a cyclic vector for $C_\varphi$. 
	Let us assume, towards a contradiction, that there is $\widehat{\alpha}\in \N^{d}$ such that $\widehat{\alpha}_{d-1}=0$ and $f_{\widehat{\alpha}}=0$.
	Observe that, for any $j\in \N$, we have that
	
	\begin{align*}
	C^j_\varphi f (z)&= \sum_{n=0}^\infty \sum_{\substack{ \alpha\in\N^d\\|\alpha|=n}} f_\alpha\lambda^{j\alpha}\left(L_d(z)+\dfrac{j}{\lambda_{d-1}} L_{d-1}(z)\right)^{\alpha_d} \prod_{k=1}^{d-1}L_k(z)^{\alpha_k} \\
	&=\sum_{n=0}^\infty \sum_{\substack{ \alpha\in\N^d\\|\alpha|=n}} L^\alpha \lambda^{j\alpha}\sum_{l=0}^{\alpha_{d-1}}\begin{pmatrix}
	\alpha_{d}+l\\ l
	\end{pmatrix}\dfrac{j^l}{\lambda_{d-1}^l}f_{\alpha-le(d-1)+le(d)}.
	\end{align*}
	
	Therefore, we have that $(C_\varphi^j f)_{\widehat{\alpha}}=0$ for all $j\in \N$. 
	This implies that the sequence $(P_{|\widehat{\alpha}|}(C_\varphi^j f))_j$ is contained in a subspace 
	of $\PP_{|\widehat{\alpha}|}$ of dimension $\textup{dim}(\PP_{|\widehat{\alpha}|})-1$.
	Thus, $f$ is not a cyclic vector.\\
	
	Conversely, let $f\in \F(\C^d)$ be such that $f_\alpha\neq 0$ for all $\alpha\in \N^d$ with $\alpha_{d-1}= 0$. 
	In order to prove that $f$ is a cyclic vector for $C_\varphi$ we follow an argument which is similar to the one given in the proof of Theorem~\ref{Theorem 1}.
	So here we only sketch the proof, highlighting the main differences with that of Theorem~\ref{Theorem 1}.\\
	
	For $n\in\N$, let us set
	$\mathcal{R}(n):=\{\alpha\in \N^d:~|\lambda^\alpha|=R(n)\}$. 
	Since $\lambda\subset \D^d$, the set $\mathcal{R}(n)$ is finite for any $n$ and $\{\mathcal{R}(n):~n\in\N\}$ is a partition of $\N^d$.
	Observe that $R(0)=1$ and $\mathcal{R}(0)=\{\{0\}^d\}$.
	Let $H=\overline{\textup{span}}\{C^j_\varphi f:~j\in\N\}$.
	In what follows, we prove that $H=\F(\C^d)$ by induction in the following way: at each step we show that $\{L^\alpha:~\alpha\in \mathcal{R}(n)\}\subset H$.\\
	
	\textbf{Initialization step.} Set $\mathcal{O}=(0,\dots,0)\in \N^d$. For any $j\in \N$, we have that
	\[C_\varphi^j f= f_{\mathcal{O}} + C_\varphi^j(f - f_{\mathcal{O}})= f_{\mathcal{O}} + C_\varphi^j(\Xi_0(f)).\]
	
	Notice that $\Xi_0(f)\in Y_0$ and $\sigma(C_\varphi|_{Y_0})\subset \D$. 
	Indeed, by Proposition~\ref{facts of xi and y}~(4), $\sigma(C_\varphi|_{Y_0})=\{\lambda^\alpha:~\alpha\in \N^{d-1}\times\{0\}\setminus\{\mathcal{O}\}\}$.
	Thanks to the spectral radius formula and since $\lambda\in \D$, the sequence $(\|C_\varphi|_{Y_0}^j\|)_j$ tends to $0$ as $j$ tends to infinity. 
	Hence, the sequence $(C_\varphi^j f)_j$ converges to $f_{\mathcal{O}} \in H$ and thus, since $f_{\mathcal{O}}\neq 0$, the constant functions belong to $H$.\\
	
	\textbf{Inductive step.} Let us assume that for some $n\geq 1$, $\{L^\alpha:~\alpha\in \mathcal{R}(k),~k\leq n-1\}\subset H$. 
	We prove that $\{L^\alpha:~\alpha\in \mathcal{R}(n)\}\subset H$. 
	Let us consider the sequence $(m(k))_k$ defined by $m(k)= \max\{j\in \N:~ \beta(j)\in \mathcal{R}(k)\}$.
	Observe that, thanks to the induction hypothesis, $\Xi_{m(n-1)}f\in H$.
	Also, notice that
	
	\[\Xi_{m(n-1)}f= \sum_{\alpha\in \mathcal{R}(n)}f_\alpha L^\alpha + \Xi_{m(n)}f.\]
	
	By Proposition~\ref{facts of xi and y}, $\Xi_{m(n)}f\in Y_{m(n)}$ and $\sigma(C_\varphi|_{Y_{m(n)}})\subset R(n+1)\overline{\D}$.
	Therefore, since $R(n+1)<R(n)$, there is $\varepsilon >0$ and $J\in \N$ such that 
	\[\left\|  C_\varphi|_{Y_{m(n)}}^j \right\|\leq (R(n)-\varepsilon)^j,~\text{for all } j\geq J. \]
	
	Thus, for any $\alpha\in \mathcal{R}(n)$, $\lambda^{-j\alpha} C_\varphi^j(\Xi_{m(n)}f)$ tends to $0$ as $j$ tends to infinity. 
	At this point, the proof follows closely the lines of the proof of Theorem~\ref{Theorem 1}. 
	Indeed, observe that
	
	\begin{align*}
	C_\varphi^j\Xi_{m(n-1)}f=& \sum_{\alpha\in \mathcal{R}(n)} L^\alpha \lambda^{j\alpha}\sum_{l=0}^{\alpha_{d-1}}\begin{pmatrix}
	\alpha_{d}+l\\ l
	\end{pmatrix}\dfrac{j^l}{\lambda_{d-1}^l}f_{\alpha-le(d-1)+le(d)}\\
	&+C_\varphi^j\Xi_{m(n)}f
	\end{align*}
	Thus, the coefficient associated to $L^\alpha$, with $\alpha\in \mathcal{R}(n)$, is $\lambda^{j\alpha}$ times a polynomial on $j$ of degree $\alpha_{d-1}$ due to the fact that $f_{\alpha -\alpha_{d-1}e(d-1)+\alpha_{d-1}e(d)}\neq 0$.
	Now, consider $\widehat{\alpha}\in \mathcal{R}(n)$ and $K=\max\{\alpha_{d-1}:~\alpha\in \mathcal{R}(n)\}$. We study inductively the sequences
	\[ \left(\dfrac{C_\varphi^j\Xi_{m(n-1)}f}{\lambda^{j\widehat{\alpha}}j^K}\right)_j,~ \left(\dfrac{C_\varphi^j\Xi_{m(n-1)}f-p_{j,1}}{\lambda^{j\widehat{\alpha}}j^{K-1}}\right)_j, ~\cdots ~, ~\left(\dfrac{C_\varphi^j\Xi_{m(n-1)}f-p_{j,K}}{\lambda^{j\widehat{\alpha}}}\right)_j,\]
	where $(p_{j,k})_{j,k}\subset \F(\C^d)$ are the polynomials defined as follows:
	\[p_{j,k}=\sum_{\substack{\alpha\in \mathcal{R}(n)\\ \alpha_{d-1}\geq K+1-k}} L^\alpha \lambda^{j\alpha}\sum_{l=0}^{\alpha_{d-1}}\begin{pmatrix}
	\alpha_{d}+l\\ l
	\end{pmatrix}\dfrac{j^l}{\lambda_{d-1}^l}f_{\alpha-le(d-1)+le(d)}.\]
	
	By sending $j$ to infinity on the first sequence and mimicking the proof of Theorem \ref{Theorem 1} (see in particular Lemma \ref{obtaining L alpha}), we obtain that each
	$L^\alpha$, with $\alpha\in \mathcal{R}(n)$, such that its associated coefficient is $\lambda^{j\alpha}$ times a polynomial of degree $K$ on $j$, belongs to $H$. 
	Observe that in the second sequence $p_{j,1}$ cancels all the $L^\alpha$ of $C_\varphi^j\sum_{\alpha\in \mathcal{R}(n)}f_\alpha L^\alpha$ such that their coefficient is $\lambda^{j\alpha}$ times a polynomial on $j$ of degree $K$.
	Thus, the second sequence is contained in $H$. 
	By sending $j$ to infinity on the second sequence, we obtain that each $L^\alpha$, with $\alpha\in \mathcal{R}(n)$, such that its associated coefficient is $\lambda^{j\alpha}$ times a polynomial of degree $K-1$ on $j$ belongs to $H$.
	The polynomial $p_{j,2}$ cancels all the $L^\alpha$ of $C_\varphi^j\sum_{\alpha\in \mathcal{R}(n)}f_\alpha L^\alpha$ such that the associated coefficient is $\lambda^{j\alpha}$ times a polynomial on $j$ of degree $K$ or $K-1$. This procedure leads to a finite induction which ends in $K+1$ steps obtaining that $\{L^\alpha:~\alpha\in \mathcal{R}(n)\}\subset H$.\\

	\textbf{Conclusion.} Since $\textup{span}\{z^\alpha:~\alpha\in \N^d\}=\textup{span}\{L^\alpha:~\alpha\in\N^d\}\subset H$ we obtain that $H = \F(\C^d)$. Thus, $f$ is a cyclic vector for $C_\varphi$.
\end{proof}

\begin{remark}
	If $A$ is diagonalizable, we actually have that
	\[C^j_\varphi f(z)=\sum_{n=0}^\infty \sum_{\substack{ \alpha\in\N^d\\|\alpha|=n}} f_\alpha \lambda^{j\alpha}L^\alpha(z),~ \text{for all } z\in \C^d.\]
\end{remark}

As an immediate corollary of Theorem \ref{theorem cyclic vectors} we can state the following result.
\begin{corollary}
Let $C_\varphi$ be a compact cyclic composition operator on $\mathcal F(\C^d)$ and denote by $\textrm{Cyc}(C_\varphi)$ its set of cyclic vectors. Then $\textrm{Cyc}(C_\varphi)\cup\{0\}$  does not contain a subspace of dimension $2$.  
\end{corollary}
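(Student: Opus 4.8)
The plan is to argue by contradiction, leaning entirely on the coefficient characterization of cyclic vectors furnished by Theorem~\ref{theorem cyclic vectors}. Suppose that $V\subset\textrm{Cyc}(C_\varphi)\cup\{0\}$ is a subspace of dimension $2$, and fix a basis $\{f,g\}$ of $V$. Since every nonzero vector of $V$ lies in $\textrm{Cyc}(C_\varphi)$, both $f$ and $g$ are cyclic. Writing $f=\sum_{\alpha}f_\alpha L^\alpha$ and $g=\sum_\alpha g_\alpha L^\alpha$ in the basis $\{L^\alpha\}$ attached to $\varphi$ and to its fixed point $\xi$, the key observation is that the multi-index $\{0\}^d$ belongs, in both cases of Theorem~\ref{theorem cyclic vectors}, to the set of indices on which the coefficients of a cyclic vector may not vanish: it lies in all of $\N^d$ in the diagonalizable case, and it satisfies $\alpha_{d-1}=0$ in the non-diagonalizable case. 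Consequently, cyclicity of $f$ already forces $f_{\{0\}^d}\neq 0$.

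First I would manufacture a nonzero element of $V$ whose coefficient at $\{0\}^d$ vanishes, namely
\[
h:=g-\frac{g_{\{0\}^d}}{f_{\{0\}^d}}\,f.
\]
Because the coefficient of $g$ in this combination equals $1$ and $\{f,g\}$ is linearly independent, we have $h\neq 0$, so $h\in V\setminus\{0\}\subset\textrm{Cyc}(C_\varphi)$. On the other hand, by construction,
\[
h_{\{0\}^d}=g_{\{0\}^d}-\frac{g_{\{0\}^d}}{f_{\{0\}^d}}\,f_{\{0\}^d}=0.
\]

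Then I would invoke Theorem~\ref{theorem cyclic vectors} in the opposite direction: since $h_{\{0\}^d}=0$ and $\{0\}^d$ belongs to the relevant index set in either case, $h$ cannot be a cyclic vector, contradicting $h\in\textrm{Cyc}(C_\varphi)$. This completes the proof, and the single computation above covers the diagonalizable and non-diagonalizable cases simultaneously, precisely because $\{0\}^d$ sits in both index sets. I do not expect a genuine obstacle here, as the statement is essentially immediate once the characterization is available; the only point requiring care is to pick a multi-index lying in the ``non-vanishing'' set in \emph{both} cases, and $\{0\}^d$ is the natural common choice. More conceptually, the argument shows that the linear constraint $\mu f_{\{0\}^d}+\nu g_{\{0\}^d}=0$ always has a nonzero solution $(\mu,\nu)$, producing a nonzero, non-cyclic vector $\mu f+\nu g\in V$ and thereby obstructing the existence of any two-dimensional subspace.
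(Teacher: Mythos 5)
Your proof is correct and is exactly the argument the paper intends: the corollary is stated there as an immediate consequence of Theorem~\ref{theorem cyclic vectors}, and your construction of $h=g-\frac{g_{\{0\}^d}}{f_{\{0\}^d}}f$, a nonzero vector whose coefficient at $\{0\}^d$ vanishes (an index lying in the non-vanishing set in both cases of the theorem), is precisely the one-line verification behind that claim. Note only that you implicitly use linearity of the coefficient functionals $f\mapsto f_\alpha$, which is justified by the uniqueness of the expansion $f=\sum_\alpha f_\alpha L^\alpha$ (indeed $f_{\{0\}^d}=f(\xi)$).
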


\section{Further dynamical properties of composition operators}\label{section 6}

This section is devoted to prove that composition operators on $\F(\C^d)$ are never weakly-supercyclic nor convex-cyclic.\\

A bounded linear operator $T$ defined on a separable Banach space $X$ is said supercyclic with respect to the topology $\tau$ if $\C\cdot \textup{orb}(T,x)$ is dense in $(X,\tau)$. 
In \cite[Theorem 5.4]{JPZ} it is proven that composition operators defined on $\F(\C^d)$ are never supercyclic. 
Also, in \cite[Theorem 1.7]{M} it is proven that weighted composition operators defined on $\F(\C)$ are never supercyclic with respect to the pointwise convergence topology. 
The proof of our next result is an adaptation of that of \cite[Theorem 1.7]{M}.
Regardless, we provide it for the sake of completeness.

\begin{theorem}\label{Theorem tp supercyclic}
	Let $\varphi(z):=Az+b$ be a holomorphic map such that $C_\varphi$ induces a bounded composition operator on $\F(\C^d)$. 
	Then, $C_\varphi$ is not supercyclic with respect to the pointwise convergence topology.
	In particular, there is no weakly-supercyclic composition operator defined on $\F(\C^d)$.	
\end{theorem}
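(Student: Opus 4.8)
The plan is to reduce pointwise supercyclicity to a two-point obstruction, built from a fixed point of $\varphi$ together with the fact that every $\varphi$-orbit is bounded. I would first record the two structural facts I rely on. By the boundedness criterion of \cite{CMS} we have $\|A\|\le 1$, and by \cite[Lemma 5.2]{JPZ} the symbol $\varphi$ has a fixed point $\xi$, so that $\varphi^n(z)=A^n(z-\xi)+\xi$ for all $n\ge 0$ and all $z$. Since $\|A^n\|\le\|A\|^n\le 1$, the orbit $\{\varphi^n(z):n\ge 0\}$ lies in the compact ball $\overline{B}(\xi,|z-\xi|)$; as $f\in\F(\C^d)$ is entire, it is bounded there, so $M_z:=\sup_{n}|f(\varphi^n(z))|<\infty$. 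The second ingredient is that, by definition of the pointwise topology $\tau_p$, each evaluation $\delta_w:f\mapsto f(w)$ is $\tau_p$-continuous; hence $\tau_p$-density of a set $S$ forces, for every finite family $w_1,\dots,w_m$, that $\{(g(w_1),\dots,g(w_m)):g\in S\}$ be dense in $\C^m$.

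I would then argue by contradiction, assuming $f$ is a $\tau_p$-supercyclic vector, so that $\C\cdot\mathrm{orb}(C_\varphi,f)=\{c\,C_\varphi^n f:c\in\C,\ n\ge 0\}$ is $\tau_p$-dense. Using $C_\varphi^n f=f\circ\varphi^n$ and evaluating at the fixed point gives $C_\varphi^n f(\xi)=f(\varphi^n(\xi))=f(\xi)$ for every $n$, a quantity independent of $n$. If $f(\xi)=0$, then every element of $\C\cdot\mathrm{orb}(C_\varphi,f)$ vanishes at $\xi$, so the constant function $1\in\F(\C^d)$, which takes the value $1$ at $\xi$, cannot be approximated in $\tau_p$; this contradicts supercyclicity, and therefore $f(\xi)\neq 0$.

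The remaining and only delicate case, $f(\xi)\neq 0$, is handled through the pair of functionals $(\delta_\xi,\delta_z)$ for any fixed $z\neq\xi$. Pointwise density would produce, for a prescribed target $w\in\C$ and any $\varepsilon>0$, a scalar $c\in\C$ and an index $n\ge 0$ with $|c\,f(\xi)-1|<\varepsilon$ and $|c\,f(\varphi^n(z))-w|<\varepsilon$. The first inequality pins $c$ to within $O(\varepsilon)$ of $f(\xi)^{-1}$, so that $|c\,f(\varphi^n(z))|\le M_z/|f(\xi)|+O(\varepsilon)$, a bound independent of $w$ and $n$. Choosing $|w|$ strictly larger than this bound contradicts the second inequality for $\varepsilon$ small, which closes the proof.

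Finally, the weak statement follows for free: the topology $\tau_p$ is coarser than the weak topology of $\F(\C^d)$, being generated by the subfamily of point evaluations, so any weakly dense set is $\tau_p$-dense; hence a weakly-supercyclic $C_\varphi$ would be $\tau_p$-supercyclic, which we have just excluded. The conceptual heart of the argument is that $\|A\|\le 1$ forces each orbit $(\varphi^n(z))_n$ to stay bounded, so the values $(f(\varphi^n(z)))_n$ cannot spread through $\C$; the only place requiring genuine care is making the two-point approximation quantitative.
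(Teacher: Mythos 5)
Your proof is correct and follows essentially the same route as the paper's: both exploit the fixed point $\xi$ (from \cite[Lemma 5.2]{JPZ}), the fact that $\|A\|\leq 1$ traps every orbit $(\varphi^n(z))_n$ in a bounded set on which the entire function $f$ is bounded, and a two-point obstruction at $(\xi,z)$ showing the values of $c\,C_\varphi^n f$ cannot be simultaneously pinned at $\xi$ and unbounded at $z$. The only difference is presentational: the paper imports the two-point obstruction as a ratio-density statement from \cite[Proposition 4]{BJM}, whereas you derive the needed special case directly (including the easy case $f(\xi)=0$, which the paper leaves to the reader), making your argument self-contained.
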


\begin{proof}[Proof of Theorem~\ref{Theorem tp supercyclic}]

	Let $\xi\in \C^d$ be a fixed point of $\varphi$. 
	Thus, $\varphi(z)= A(z-\xi)+\xi$ for all $z\in \C^d$. 
	Let us proceed towards a contradiction. 
	Assume that there is $f\in \F(\C^d)$ such that $f$ is a supercyclic vector for $C_\varphi$ with respect to the pointwise convergence topology.
	It easily follows that $f(\xi)\neq 0$. 
	Thus, by \cite[Proposition 4]{BJM}, we have that for any $z,z'\in \C^d$, with $z\neq z'$, 
	\begin{equation}\label{eq:supercyclic}
	\overline{ \left\{  \dfrac{f(\varphi^n(z))}{f(\varphi^n(z'))}:~n\in\N,~ f(\varphi^n(z'))\neq 0 \right\}}=\C.
	\end{equation}
	
	\noindent Since $f(\xi)\neq0$, there is $r>0$ such that $0 \notin \overline{f(\xi+r\D)}$. 
	Also, since $\|A\|\leq 1$, we have that $\varphi(\xi+r\D)\subset \xi+r\D$. 
	Now, let us fix $z\in (r\D+\xi)\setminus \{\xi\}$ and set $z'=\xi$. 
	Then
	
	\[ \left | \dfrac{ f(\varphi^n(z))}{f(\varphi^n(z'))} \right | \leq  \dfrac{ \sup \{|f(w)|: ~w\in \xi+r\D \}}{|f(\xi)|}<\infty.  \]
	
	This clearly contradicts \eqref{eq:supercyclic}.

\end{proof}

Now, we turn our study to the concept of convex-cyclicity. A linear operator $T\in \mathcal{L}(X)$ is said convex-cyclic if there is $x\in X$ such that $\textup{co} (\textup{orb}(T,x))$ is dense in $X$, where $\textup{co}(A)$ means the convex hull of the set $A$.
Up to the best of our knowledge, this concept was introduced in 2013 by Rezaei \cite{R}.
Further, in \cite{M}, Mengestie characterized the convex-cyclicity of weighted composition operators defined on $\F(\C)$. 
In what follows, we show that there is no convex-cyclic composition operators on $\F(\C^d)$.

\begin{theorem}
	Let $\varphi:\C^d\to\C^d$ be a holomorphic map such that $C_\varphi$ induces a bounded composition operator on $\F(\C^d)$. Then, $C_\varphi$ is not convex-cyclic.
\end{theorem}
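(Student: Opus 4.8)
The plan is to test the convex hull of every orbit against a single eigenfunctional of the adjoint $C_\varphi^*$ whose eigenvalue has modulus at most $1$. The guiding principle I would use is elementary: if $g\neq 0$ satisfies $C_\varphi^* g=\lambda g$, then for every $f\in\F(\C^d)$ and every $j\geq 0$ one has $\langle C_\varphi^j f,g\rangle=\bar\lambda^j\langle f,g\rangle$, so the continuous functional $\langle\,\cdot\,,g\rangle$ sends each convex combination $\sum_j t_j C_\varphi^j f$ into the convex hull of $\{\bar\lambda^j\langle f,g\rangle:j\geq0\}$. Whenever $|\lambda|\leq 1$ this target set is bounded and therefore cannot be dense in $\C$, which already rules out convex-cyclicity of $C_\varphi$.

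The heart of the argument is the observation that $\lambda=1$ is always an eigenvalue of $C_\varphi^*$. By Lemma~\ref{lem:range} there exists $c\in\C^d$ with $(I-A)c=b/2$, and I would take $g(z):=\exp(\langle z,c\rangle)$, which lies in $\F(\C^d)$ since $g=k_{2c}$ is (a reparametrization of) a reproducing kernel. That $C_\varphi^* g=g$ is exactly the case $\alpha=0$ of the computation in the proof of Proposition~\ref{non-cyclicity 2}: using Proposition~\ref{adjoint operator} and $\widehat\varphi(z)=A^*z$,
\[
C_\varphi^* g=k_b\cdot(g\circ\widehat\varphi)=\exp\big(\langle z,b/2+Ac\rangle\big)=\exp(\langle z,c\rangle)=g,
\]
where the last equality uses $b/2+Ac=c$, which is just a rewriting of $(I-A)c=b/2$.

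With $\lambda=1$ the scheme collapses to its simplest form: the scalars $\bar\lambda^j$ are all equal to $1$, so $\langle C_\varphi^j f,g\rangle=\langle f,g\rangle$ for every $j$, and hence $\langle h,g\rangle=\langle f,g\rangle$ for every $h\in\textup{co}(\textup{orb}(C_\varphi,f))$. Thus the convex hull of the orbit of an arbitrary $f$ is trapped inside the closed affine hyperplane $\{h:\langle h,g\rangle=\langle f,g\rangle\}$, a proper closed subset of $\F(\C^d)$ because $g\neq 0$. Consequently this convex hull is never dense, so no $f$ can be a convex-cyclic vector.

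I do not expect a genuine obstacle: the whole statement reduces to the permanence of the eigenvalue $1$ for $C_\varphi^*$, which is forced by the fact that $\varphi$ always admits a fixed point, equivalently by $b\in\textrm{Ran}(I-A)$. The only points requiring care are checking that the candidate eigenfunction $g$ truly belongs to $\F(\C^d)$ and that the adjoint identity of Proposition~\ref{adjoint operator} is applied with the correct conjugations; both are routine and already implicit in the proof of Proposition~\ref{non-cyclicity 2}.
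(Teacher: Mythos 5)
Your proof is correct, and every ingredient you invoke (Lemma~\ref{lem:range}, Proposition~\ref{adjoint operator}, the $\alpha=0$ case of the computation in Proposition~\ref{non-cyclicity 2}) is available in the paper. The route looks different from the paper's: the paper never mentions adjoint eigenvectors here, but instead applies the projection $P_0$ of Proposition~\ref{Projection and complementability} centered at a fixed point $\xi$ of $\varphi$, uses that $P_0$ commutes with $C_\varphi$ and fixes constants, and concludes that $P_0$ maps the whole convex hull of $\textup{orb}(C_\varphi,f)$ to the single constant $P_0 f$, so the hull cannot be dense. However, the two arguments are the same mechanism in different clothing. Indeed $P_0 h$ is the constant function with value $h(\xi)$, so the paper is trapping the convex hull in the level set $\{h:\ h(\xi)=f(\xi)\}$ of evaluation at a fixed point; your functional $\langle\cdot,g\rangle$ with $g=e^{\langle z,c\rangle}=k_{2c}$ is point evaluation at $2c$, and the relation $(I-A)c=b/2$ says exactly that $A(2c)+b=2c$, i.e.\ that $2c$ is a fixed point of $\varphi$, so your hyperplane $\{h:\ \langle h,g\rangle=\langle f,g\rangle\}$ is the same kind of level set, just at the fixed point $2c$. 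What your formulation buys: it makes explicit that the argument needs only \emph{some} eigenvector of $C_\varphi^*$ with eigenvalue of modulus at most $1$ (boundedness of the scalar convex hull already suffices, as your first paragraph notes), and it bypasses the boundedness of the projection $P_N$, which requires a separate estimate. What the paper's formulation buys: the projection machinery of Proposition~\ref{Projection and complementability} is already in place, since it is needed anyway for Propositions~\ref{2 jordan blocks} and~\ref{1 jordan block}, so the paper's proof is a two-line corollary of existing tools.
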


\begin{proof} 
Let $\xi\in \C^d$ be a fixed point of $\varphi$ and let $f\in \F(\C^d)$.
According to Proposition~\ref{Projection and complementability}, $\F(\C^d)=\PP_0\oplus \QQ_0$. 
Moreover, $\PP_0$ and $\QQ_0$ are invariant subspaces for $C_\varphi$ and $P_0$ and $C_\varphi$ commute. 
Let us denote by $f_0:=P_0(f)$, which is a constant function.  
Now, observe that, for any sequence $(\sigma_k)_{k\in \N}\subset\R^+$ with finitely many non-zero terms, such that $\sum_k \sigma_k=1$, we have that

\[P_0\left(\sum_{k=0}^\infty \sigma_k C^k_\varphi f\right)=\sum_{k=0}^\infty \sigma_k C^k_\varphi P_0 f = f_0.\]
Therefore, $f$ is not a convex-cyclic vector for $C_\varphi$. Since $f$ is an arbitrary function on $\F(\C^d)$, the operator $C_\varphi$ is not convex-cyclic.
\end{proof}

\section{Approximation numbers}
\label{section 7}
\noindent In order to simplify the notations, in this section we use the convention $0^0=1$.\\

\noindent Let us recall that for a linear bounded compact operator $T\in\mathcal{L}(X)$, the $n$-th approximation number $a_n(T)$ is defined by

\[ a_n(T):= \inf \{ \|T-S\|:~ S\in \mathcal{L}(X),~\textup{dim}(S(X))\leq n-1\}.\]

When $X$ is a separable Hilbert space, it is well-known that the sequence $(a_n(T))_n$ coincides with the decreasing enumeration of  the singular values of $T$, that is, the square roots of the eigenvalues of $T^*T$ (equivalently, the eigenvalues of $\sqrt{T^*T}$).

The computation (or, at least, the estimation) of the approximation numbers of compact composition operators has been the subject of many investigations in the recent years (see for instance \cite{LQR} and the references therein). We compute these numbers for compact composition operators defined on $\F(\C^d)$. To do this we gather some results which can be found in \cite[Theorem 1.2]{Z} and \cite[Proposition 2.5]{FZ}: 

\begin{proposition}\cite{FZ,Z}\label{weighted operators}
	Let $\varphi(z):= Az+b$ be such that $C_\varphi$ induces a bounded compact composition operator on $\F(\C^d)$, i.e. $\|A\|<1$. 
	Further, assume that $A$ is self-adjoint. 
	Then, $W_{k_b,\varphi}$ is a bounded self-adjoint weighted composition operator on $\F(\C^d)$. 
	Moreover, the operator $W_{k_b,\varphi}^*W_{k_b,\varphi}$ is unitary equivalent to $\exp(\langle (I-A)^{-1}b,b\rangle) C_{AA^*z}$. 
\end{proposition}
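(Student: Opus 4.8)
The plan is to reduce every computation to the action on the reproducing kernels $k_w$, which form a total subset of $\F(\C^d)$, and then to strip off the affine part of the symbol by conjugating with a Weyl (translation) unitary. First I would dispose of the self-adjointness. Boundedness is free, since $\|A\|<1$ makes $C_\varphi$ (hence $W_{k_b,\varphi}$) compact. Using the reproducing property $\langle g,k_w\rangle=g(w)$ and Proposition~\ref{adjoint operator}, a direct computation gives
\[
W_{k_b,\varphi}\,k_w=\exp\!\big(\tfrac{\langle b,w\rangle}{2}\big)\,k_{A^*w+b}
\qquad\text{and}\qquad
W_{k_b,\varphi}^*\,k_w=\exp\!\big(\tfrac{\langle b,w\rangle}{2}\big)\,k_{Aw+b}.
\]
When $A=A^*$ these agree for every $w$, and totality of $\{k_w\}$ yields $W_{k_b,\varphi}=W_{k_b,\varphi}^*$.

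Next, since $W:=W_{k_b,\varphi}$ is now self-adjoint, $W^*W=W^2$, and I would compute $W^2$ explicitly as a weighted composition operator. Writing $Wf(z)=\exp(\langle z,b\rangle/2)f(Az+b)$ and iterating (using $A=A^*$ to rewrite $\langle Az,b\rangle=\langle z,Ab\rangle$) gives, on kernels,
\[
W^2 k_w=\exp\!\big(\tfrac{|b|^2}{2}+\tfrac{\langle (I+A)b,\,w\rangle}{2}\big)\,k_{AA^*w+(I+A)b},
\]
so that $W^2$ is $\exp(|b|^2/2)$ times the weighted composition operator with weight $k_{(I+A)b}$ and symbol $\psi(z)=AA^*z+(I+A)b$. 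The symbol $\psi$ has a unique fixed point $\xi$, solving $(I-AA^*)\xi=(I+A)b$; since $AA^*=A^2$ and $I-A^2=(I-A)(I+A)$, this simplifies to $\xi=(I-A)^{-1}b$, which is exactly the vector appearing in the target scalar.

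Finally I would conjugate by the Weyl unitary $V$ associated with $\xi$, defined by $Vf(z)=\exp(-|\xi|^2/4)\exp(\langle z,\xi\rangle/2)f(z-\xi)$, whose action on kernels is $Vk_w=\exp(-\langle\xi,w\rangle/2-|\xi|^2/4)\,k_{w+\xi}$ and which is unitary with inverse the Weyl operator of $-\xi$ (checked by verifying $\langle Vk_w,Vk_{w'}\rangle=\langle k_w,k_{w'}\rangle$). Computing $V^{-1}W^2V\,k_w$ on kernels, the translation in the resulting kernel index is $AA^*w+(AA^*-I)\xi+(I+A)b$; the choice $\xi=(I-A)^{-1}b$ exactly cancels the constant term, leaving $k_{AA^*w}=C_{AA^*z}k_w$, and the $w$-dependent part of the scalar prefactor also cancels, so the result is a genuine scalar multiple of $C_{AA^*z}$. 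This identifies the unitary equivalence, with $V=W_\xi$. The main obstacle is the bookkeeping of that scalar: collecting the exponent as a quadratic form $\langle Mb,b\rangle$ and checking that $M$ reduces to $(I-A)^{-1}$, which uses the self-adjoint functional calculus together with the factorization $I-A^2=(I-A)(I+A)$; once this algebra is carried out the prefactor is precisely $\exp(\langle (I-A)^{-1}b,b\rangle)$, completing the proof.
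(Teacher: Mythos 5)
There is no in-paper proof to compare against: Proposition~\ref{weighted operators} is imported from \cite[Proposition 2.5]{FZ} and \cite[Theorem 1.2]{Z}, and the paper only uses it (in the proof of the approximation-numbers theorem of Section~\ref{section 7}) without reproving it. Taken on its own, your argument is a correct, self-contained derivation, and it runs along the standard lines of the cited sources. I checked the key computations: $W_{k_b,\varphi}k_w=e^{\langle b,w\rangle/2}k_{A^*w+b}$ and $W_{k_b,\varphi}^*k_w=e^{\langle b,w\rangle/2}k_{Aw+b}$, so $A=A^*$ plus totality of the kernels gives self-adjointness; $W_{k_b,\varphi}^2k_w=e^{|b|^2/2+\langle(I+A)b,w\rangle/2}k_{AA^*w+(I+A)b}$; the fixed point of $z\mapsto AA^*z+(I+A)b$ is $\xi=(I-A)^{-1}b$ because $I-A^2=(I-A)(I+A)$; and conjugation by the Weyl unitary $V$ of $\xi$ kills both the affine term (the kernel index becomes $AA^*w+(AA^*-I)\xi+(I+A)b=AA^*w$) and the $w$-dependence of the weight (the exponent contains $\tfrac12\langle -\xi+(I+A)b+AA^*\xi,w\rangle=0$), leaving the constant $\tfrac12\left(|b|^2+\langle(I-A)^{-1}(I+A)b,b\rangle\right)=\langle(I-A)^{-1}b,b\rangle$, since $I+(I-A)^{-1}(I+A)=2(I-A)^{-1}$. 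So the unitary equivalence and the scalar come out exactly as claimed.

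The one genuine gap is your opening claim that ``boundedness is free, since $\|A\|<1$ makes $C_\varphi$ (hence $W_{k_b,\varphi}$) compact''. That inference is invalid: $W_{k_b,\varphi}=M_{k_b}C_\varphi$, and the multiplication operator $M_{k_b}$ is unbounded on $\F(\C^d)$ whenever $b\neq 0$ (if $M_\psi$ were bounded, then $M_\psi^*k_w=\overline{\psi(w)}\,k_w$ would force $|\psi(w)|\leq\|M_\psi\|$ for all $w$, hence $\psi$ constant by Liouville), so compactness of $C_\varphi$ does not transfer to the weighted operator. This matters logically, because every later step — the existence of $W_{k_b,\varphi}^*$ and the extension of identities from the span of kernels by density — presupposes that $W_{k_b,\varphi}$ is bounded. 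Two easy repairs: (i) the pointwise bound $|f(Az+b)|\leq\|f\|e^{|Az+b|^2/4}$ gives $\|W_{k_b,\varphi}f\|^2\leq\|f\|^2(2\pi)^{-d}\int_{\C^d}e^{|b||z|+(\|A\||z|+|b|)^2/2-|z|^2/2}\,dA(z)<\infty$, the integral converging precisely because $\|A\|<1$; or (ii), staying inside the paper's toolkit, write $A=U|A|$ with $U$ unitary (possible since $A$ is self-adjoint) and verify via Proposition~\ref{adjoint operator} that $W_{k_b,Az+b}=C^*_{|A|^{1/2}z+b}\,C_{U|A|^{1/2}z+b}$, a product of two composition operators which are bounded (indeed compact) by the criterion of \cite{CMS}, since both matrices have norm $\|A\|^{1/2}<1$. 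With that patched, your proof is complete and correct.
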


In \cite[Theorem 1.1]{FZ} we can find the spectrum of bounded normal weighted composition operators defined on $\F(\C^d)$. A straightforward modification of the proof
of Proposition \ref{spectrum compact} gives us the multiplicity of each non zero eigenvalue.
\begin{lemma}\label{eigenvalues}
	Let $A\in \C^{d\times d}$ be an Hermitian matrix of norm $\|A\|<1$. Let $\lambda:=(\lambda_j)_{j=1}^d$ be the eigenvalues of $A$. Then, 
	\[\sigma (C_{Az})= \{\lambda^\alpha:~ \alpha\in \N^d\}\cup\{0\}.\]
	Moreover, the multiplicity of the eigenvalue $\rho\in \sigma_p(C_{Az})\setminus\{0\}$ is exactly $\#\{\alpha\in \N^d:~\rho=\lambda^\alpha\}$.
\end{lemma}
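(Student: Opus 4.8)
The plan is to reduce to the diagonal case by a unitary conjugation, where the spectral picture is transparent, and then to read off the spectrum and the multiplicities from the eigenbasis of monomials. First I would use that a Hermitian $A$ is unitarily diagonalizable: write $A=PDP^*$ with $P$ unitary and $D=\textup{diag}(\lambda_1,\dots,\lambda_d)$, the $\lambda_j$ being the (real) eigenvalues of $A$, each with $|\lambda_j|\le\|A\|<1$. As in the proof of Proposition~\ref{conjugated operator}, $C_{Pz}$ is a unitary operator on $\F(\C^d)$ with inverse $C_{P^*z}$, and a direct computation with the composition rule gives $C_{Pz}C_{Az}C_{P^*z}=C_{P^*APz}=C_{Dz}$. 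Hence $C_{Az}$ and $C_{Dz}$ are unitarily equivalent, so they share the same spectrum and the same eigenvalue multiplicities, and it suffices to analyze $C_{Dz}$.

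For the diagonal operator I would observe that the monomials are eigenvectors: $C_{Dz}(z^\alpha)=(Dz)^\alpha=\lambda^\alpha z^\alpha$ for every $\alpha\in\N^d$. Since $\{z^\alpha:\alpha\in\N^d\}$ is a complete orthogonal basis of $\F(\C^d)$, the operator $C_{Dz}$ is diagonal in this basis (and self-adjoint, as $D$ is real), so its point spectrum is exactly $\{\lambda^\alpha:\alpha\in\N^d\}$. Because $|\lambda_j|<1$ for all $j$, one has $|\lambda^\alpha|\to 0$ as $|\alpha|\to\infty$, so these eigenvalues accumulate only at $0$; as $C_{Dz}$ is compact ($\|A\|<1$), this yields $\sigma(C_{Dz})=\sigma_p(C_{Dz})\cup\{0\}=\{\lambda^\alpha:\alpha\in\N^d\}\cup\{0\}$, the first assertion.

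The multiplicities are the only point requiring care, and they are the minor obstacle here. Fixing $\rho\in\sigma_p(C_{Dz})\setminus\{0\}$ and writing $f=\sum_\alpha c_\alpha z^\alpha$ with $C_{Dz}f=\rho f$, comparison of coefficients in the orthogonal basis gives $(\lambda^\alpha-\rho)c_\alpha=0$ for all $\alpha$, so $c_\alpha=0$ whenever $\lambda^\alpha\neq\rho$; hence the eigenspace of $\rho$ is precisely $\overline{\textup{span}}\{z^\alpha:\lambda^\alpha=\rho\}$. I would then check that $\{\alpha:\lambda^\alpha=\rho\}$ is finite: since $\rho\neq 0$, necessarily $\alpha_j=0$ for every $j$ with $\lambda_j=0$, while for the remaining indices $|\lambda_j|<1$, so the identity $|\rho|=\prod_j|\lambda_j|^{\alpha_j}$ forces a uniform bound on $|\alpha|$. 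Consequently the eigenspace is finite-dimensional of dimension $\#\{\alpha\in\N^d:\lambda^\alpha=\rho\}$, which is the second assertion. This is indeed the announced straightforward modification of the proof of Proposition~\ref{spectrum compact}, the only new feature being the bookkeeping of distinct multi-indices producing the same value $\lambda^\alpha$, and the main thing to verify is precisely the finiteness above, which makes compactness and the multiplicity count compatible.
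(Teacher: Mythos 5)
Your proof is correct, and it takes a more self-contained route than the paper does. The paper never writes out a proof of this lemma: it quotes \cite{FZ} (Theorem 1.1 there) for the identification of the spectrum of normal weighted composition operators on the Fock space, and for the multiplicity statement it merely indicates that one should adapt the coefficient-comparison argument of Proposition~\ref{spectrum compact} (which, via Proposition~\ref{eigenvector}, treats the harder non-diagonalizable Jordan-block situation in the basis $\{L^\alpha\}$). You instead exploit the Hermitian hypothesis from the start: the unitary conjugation $C_{Pz}C_{Az}C_{P^*z}=C_{P^*APz}=C_{Dz}$ (the same mechanism as in Proposition~\ref{conjugated operator}) reduces everything to a diagonal symbol, where the monomials $z^\alpha$ form an orthogonal eigenbasis with $C_{Dz}z^\alpha=\lambda^\alpha z^\alpha$, compactness ($\|A\|<1$) gives $\sigma=\sigma_p\cup\{0\}$, and the comparison $(\lambda^\alpha-\rho)c_\alpha=0$ identifies each eigenspace as $\textup{span}\{z^\alpha:\lambda^\alpha=\rho\}$. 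You also correctly isolate the one point that needs checking for the multiplicity count to make sense, namely the finiteness of $\{\alpha\in\N^d:\lambda^\alpha=\rho\}$ for $\rho\neq 0$ (handling zero eigenvalues of $A$ under the paper's convention $0^0=1$, and using $\max_{\lambda_j\neq 0}|\lambda_j|<1$ to bound $|\alpha|$); this is in effect the ``straightforward modification'' the paper alludes to, but carried out in the simpler diagonal setting. What your route buys is a complete elementary argument with no external citation; what the paper's route buys is brevity and uniformity with Section~\ref{section 5}, whose machinery also covers the non-diagonalizable case, which your diagonalization step cannot reach.
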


\begin{theorem}
Let $\varphi(z):=Az+b$ be such that $C_\varphi$ induces a bounded compact composition operator on $\F(\C^d)$ with $A\neq 0$.
Let $\lambda=(\lambda_j)_{j=1}^d\subset \R^+$ be the singular values of $A$.
Let $(\alpha_n)_n\subset \N^d$ be an enumeration of the set $\{\alpha\in\N^d:~\lambda^\alpha\neq 0\}$ such that the sequence $(\lambda^{\alpha_n})_n$ is nonincreasing. 
Then

\[a_n(C_\varphi)= \exp\left(\dfrac{\langle (I-B)^{-1}v,v\rangle}{2}- \frac{|v|^2}{4}\right) \lambda^{\alpha_n},\]
where $B=\sqrt{AA^*}$ and $v=(I+B)^{-1}b$. 
In particular, 
\[\sum_{n=1}^\infty a_n(C_\varphi)=\exp\left(\dfrac{\langle (I-B)^{-1}v,v\rangle}{2}- \frac{|v|^2}{4}\right) \prod_{j=1}^{d}\dfrac{1}{1-\lambda_j}.\]
\end{theorem}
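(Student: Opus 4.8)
The plan is to identify the approximation numbers with the singular values of $C_\varphi$ and to reduce their computation to the spectrum of a genuine composition operator, combining Proposition~\ref{adjoint operator}, Proposition~\ref{weighted operators} and Lemma~\ref{eigenvalues}. Since $\F(\C^d)$ is a Hilbert space, $a_n(C_\varphi)$ is the $n$-th term of the nonincreasing enumeration of the eigenvalues of $\sqrt{C_\varphi^*C_\varphi}$; equivalently, it is the $n$-th fourth root, taken in nonincreasing order, of the eigenvalues of $(C_\varphi^*C_\varphi)^2$. First I would compute $C_\varphi^*C_\varphi$ explicitly. By Proposition~\ref{adjoint operator}, $C_\varphi^*=W_{k_b,\widehat\varphi}$ with $\widehat\varphi(z)=A^*z$, so that for $f\in\F(\C^d)$,
\[
C_\varphi^*C_\varphi f(z)=k_b(z)\,(f\circ\varphi)(A^*z)=k_b(z)\,f(AA^*z+b),
\]
that is, $C_\varphi^*C_\varphi=W_{k_b,\psi}$ with $\psi(z)=AA^*z+b$. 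The matrix $AA^*$ is self-adjoint with $\|AA^*\|=\|A\|^2<1$, and the condition of \cite{CMS} on the translation part is then vacuous, so $\psi$ is an admissible compact symbol and $W_{k_b,\psi}$ falls exactly within the scope of Proposition~\ref{weighted operators}.

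Next I would exploit that $W_{k_b,\psi}=C_\varphi^*C_\varphi$ is self-adjoint, whence $W_{k_b,\psi}^*W_{k_b,\psi}=(C_\varphi^*C_\varphi)^2$. Applying Proposition~\ref{weighted operators} with the self-adjoint matrix $AA^*$ gives
\[
(C_\varphi^*C_\varphi)^2\ \cong\ \exp\!\big(\langle(I-AA^*)^{-1}b,b\rangle\big)\,C_{(AA^*)^2z},
\]
where $\cong$ denotes unitary equivalence. Since $(AA^*)^2$ is Hermitian with eigenvalues $\lambda_1^4,\dots,\lambda_d^4$, Lemma~\ref{eigenvalues} yields that the nonzero eigenvalues of $C_{(AA^*)^2z}$ are precisely the numbers $\lambda^{4\alpha}$, $\alpha\in\N^d$, the multiplicity of a value $\rho$ being $\#\{\alpha:\rho=\lambda^{4\alpha}\}$. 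Consequently the nonzero eigenvalues of $(C_\varphi^*C_\varphi)^2$, counted with multiplicity, are the $\exp(\langle(I-AA^*)^{-1}b,b\rangle)\,\lambda^{4\alpha}$.

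It then remains to take fourth roots. Because $C_\varphi^*C_\varphi$ is positive and $t\mapsto t^{1/4}$ is injective on $[0,\infty)$, eigenspaces are preserved, so the singular values of $C_\varphi$, counted with multiplicity, are $\exp\big(\tfrac14\langle(I-AA^*)^{-1}b,b\rangle\big)\lambda^{\alpha}$ for $\alpha\in\N^d$ with $\lambda^\alpha\neq0$. Arranging these nonincreasingly is exactly what the enumeration $(\alpha_n)_n$ achieves, giving $a_n(C_\varphi)=\exp\big(\tfrac14\langle(I-AA^*)^{-1}b,b\rangle\big)\lambda^{\alpha_n}$. To match the stated constant I would set $B=\sqrt{AA^*}$ and $v=(I+B)^{-1}b$ (so $b=(I+B)v$) and use $(I-AA^*)^{-1}=(I-B)^{-1}(I+B)^{-1}$ together with the operator identity $(I+B)(I-B)^{-1}=2(I-B)^{-1}-I$, which rewrites $\tfrac14\langle(I-AA^*)^{-1}b,b\rangle$ as $\tfrac12\langle(I-B)^{-1}v,v\rangle-\tfrac14|v|^2$. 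Finally, $\sum_n a_n(C_\varphi)$ equals the constant times $\sum_{\alpha:\lambda^\alpha\neq0}\lambda^\alpha$, which factorizes into geometric series as $\prod_{j=1}^d(1-\lambda_j)^{-1}$, the vanishing singular values contributing a trivial factor $1$ under the convention $0^0=1$.

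The genuinely delicate points here are conceptual rather than computational: recognizing $C_\varphi^*C_\varphi$ as itself a weighted composition operator of the exact shape handled by Proposition~\ref{weighted operators}, and then tracking eigenvalue multiplicities faithfully through the squaring built into that proposition and through the subsequent fourth-root extraction, so that the nonincreasing enumeration $(\lambda^{\alpha_n})_n$ reproduces $a_n$ term by term. The constant-matching step is routine linear algebra once $B$ and $v$ have been introduced.
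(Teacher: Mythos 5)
Your proposal is correct, and it takes a genuinely different route through the same three ingredients. The paper also begins with $C_\varphi^*C_\varphi=W_{k_b,\,AA^*z+b}$ (via Proposition~\ref{adjoint operator}), but its central step is to \emph{guess and verify an explicit square root}: it sets $B=\sqrt{AA^*}$, $v=(I+B)^{-1}b$, defines $S=\exp(-|v|^2/4)\,W_{k_v,\,Bz+v}$, checks by direct computation that $S$ is self-adjoint and $S^2=C_\varphi^*C_\varphi$, and then applies Proposition~\ref{weighted operators} to $S$, so that $C_\varphi^*C_\varphi=S^*S$ is unitarily equivalent to $\exp\left(\langle(I-B)^{-1}v,v\rangle-\tfrac{|v|^2}{2}\right)C_{B^2z}$; the constant in the statement then appears with no further algebra, and only a square root of eigenvalues is needed. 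You instead apply Proposition~\ref{weighted operators} one level up, to $W_{k_b,\,AA^*z+b}=C_\varphi^*C_\varphi$ itself --- legitimate, since $AA^*$ is self-adjoint with $\|AA^*\|<1$, so the boundedness and compactness hypotheses hold and the translation condition of \cite{CMS} is vacuous, as you note --- obtaining $(C_\varphi^*C_\varphi)^2\cong\exp\left(\langle(I-AA^*)^{-1}b,b\rangle\right)C_{(AA^*)^2z}$, and you descend by a fourth root, which is unambiguous on a positive compact operator and preserves eigenspaces and multiplicities, exactly as you argue. The price is the constant-matching identity, which is indeed correct: writing $b=(I+B)v$ and $(I-B^2)^{-1}=(I+B)^{-1}(I-B)^{-1}$ gives $\langle(I-AA^*)^{-1}b,b\rangle=\langle(I+B)(I-B)^{-1}v,v\rangle=2\langle(I-B)^{-1}v,v\rangle-|v|^2$, hence $\tfrac14\langle(I-AA^*)^{-1}b,b\rangle=\tfrac12\langle(I-B)^{-1}v,v\rangle-\tfrac14|v|^2$. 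In short, the paper's route buys a statement-ready constant at the cost of producing the square root $S$ out of thin air; your route needs no such guess, applying the key proposition to an operator already in hand, at the cost of a fourth-root extraction and a small matrix computation at the end. The multiplicity bookkeeping via Lemma~\ref{eigenvalues} and the geometric-series summation (with the convention $0^0=1$ absorbing any vanishing singular values) are the same in both arguments.
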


\begin{proof}
Let us first notice that, since $C_\varphi$ is compact, $\|A\|<1$.
Thanks to Proposition~\ref{adjoint operator}, we know that $C_\varphi^*=W_{k_b,A^*z}$. 
Thus
\[T:=C_\varphi^*C_\varphi = W_{k_b,AA^*z+b}.\]

In order to continue, set $B:= \sqrt{AA^*}$ and recall that $B$ is a self-adjoint matrix.  
Observe that $\|B\|<1$ as well.
Let $v:= (I+B)^{-1}b$ and define 

\[S = \exp\left(-\dfrac{|v|^2}{4} \right) W_{k_v,Bz+v}.\]

We claim that $S:= \sqrt{T}$. 
Indeed, thanks to Proposition~\ref{weighted operators}, $S$ is a bounded self-adjoint weighted composition operator on $\F(\C^d)$.
Moreover, observe that for any $f\in \F(\C^d)$ we have
\begin{align*}
S^2 (f)(z)&= \exp\left(-\dfrac{|v|^2}{2} \right)k_v(z) k_v(Bz+v)f(B^2z+Bv+v)\\
&=\exp\left(\dfrac{-|v|^2 + \langle z,v\rangle + \langle Bz+v,v \rangle }{2} \right)f(AA^*z+(I+B)v)\\
&=\exp \left( \dfrac{\langle z,b \rangle }{2}\right)f(AA^*z+b)= T(f)(z).
\end{align*} 

Since $S$ is a self-adjoint operator, $T=S^*S$.
Again thanks to Proposition~\ref{weighted operators}, $T$ is unitarily equivalent to $\exp({\langle (I-B)^{-1}v,v\rangle}-\frac{|v|^2}{2}) C_{B^2z}$.
Now, thanks to Lemma~\ref{eigenvalues} and recalling that $B^2=AA^*$, we get that the eigenvalues of $T$ are

\[\sigma(T):= \left \{ \exp\left({\langle (I-B)^{-1}v,v\rangle}-\frac{|v|}{2}\right) (\lambda^{\alpha})^2:~ \alpha\in \N^d  \right \}.\]

Moreover, if $(\alpha_n)_n\in \N^d$ is an enumeration of the set $\{\alpha\in\N^d:~\lambda^\alpha\neq 0\}$ such that the sequence $(\lambda^{\alpha_n})_n$ is nonincreasing, then 

\[a_n(C_\varphi)= \left( \exp\left({\langle (I-B)^{-1}v,v\rangle} - \frac{|v|^2}{2}\right)\lambda^{2\alpha_n} \right)^{1/2}= \exp\left(\dfrac{\langle (I-B)^{-1}v,v\rangle}{2}-\frac{|v|^2}{4}\right)\lambda^{\alpha_n}.\]

Finally, the formula of geometric series gives us that

\[\sum_{n=1}^\infty a_n (C_\varphi)= \exp\left(\dfrac{\langle (I-B)^{-1}v,v\rangle}{2}-\frac{|v|^2}{4}\right) \prod_{j=1}^d \dfrac{1}{1-\lambda_j}.\]
\end{proof}

\begin{remark}
With this theorem, we get another proof that a compact composition operator 
on the Fock space belongs to all Schatten classes (see \cite{Du} or \cite{JPZ}).
\end{remark}

\bibliographystyle{amsplain}

\providecommand{\bysame}{\leavevmode\hbox to3em{\hrulefill}\thinspace}
\providecommand{\MR}{\relax\ifhmode\unskip\space\fi MR }
% \MRhref is called by the amsart/book/proc definition of \MR.
\providecommand{\MRhref}[2]{%
  \href{http://www.ams.org/mathscinet-getitem?mr=#1}{#2}
}
\providecommand{\href}[2]{#2}
\begin{thebibliography}{}

\end{thebibliography}


\begin{thebibliography}{999}
\bibitem{BM} F. Bayart and \'E. Matheron. Dynamics of linear operators. Cambridge Tracts in Mathematics, 179.
\bibitem{BJM} M. J. Beltr\'an-Meneu, E. Jord\'a and M. Murillo-Arcila. Supercyclicity of weighted composition operators on spaces of continuous functions, Collect. Math., 71 (2020), 493-509.
\bibitem{BS} P. Bourdon and J. Shapiro. Cyclic phenomena for composition operators. Mem. Amer. Math. Soc, 125 (1997), No. 596.
\bibitem{CG} T. Carrol and C. Gilmore. Weighted composition operators on Fock spaces and their dynamics. Preprint: arXiv:1911.07254
\bibitem{CMS} B. Carswell, B. MacCluer and A. Schuster. Composition operators on the Fock space. Acta Sci. Math. (Szeged) 69 (2003) 871-887.
\bibitem{CM} C. Cowen and B. MacCluer. Composition operators on spaces of analytic functions. Studies in Advanced Mathematics. CRC Press.
\bibitem{DK} M. Doan and L. Khoi. Closed range and cyclicity of composition operators on Hilbert spaces of entire functions. Complex Var. Elliptic Equ., 63(11) (2018), 1558-1569.
\bibitem{Du} D.Y. Du. Schatten Class Weighted Composition Operators
on the Fock Space $F^2_\alpha(\mathbb C^N )$. Int. Journal of Math. Analysis, Vol. 5, 2011, no. 13, 625 - 630
\bibitem{FZ} L. Feng and L. Zhao. Spectrum of normal weighted composition operators on the Fock space Over $\C^N$. Acta Math. Sin., Eng. Ser., 35(9) (2019), 1563–1572. 
\bibitem{GI} K. Guo and K. Izuchi. Composition operators on Fock type spaces. Acta Sci. Math. (Szeged) 74 (2008) 807-828.
\bibitem{JPZ} L. Jiang, G. Prajitura and R. Zhao. Some characterizations for composition operators on the Fock spaces.
J. Math. Anal. Appl. 455 (2017) 1204-1220.
\bibitem{LQR} D. Li, H. Queffélec and L. Rodríguez-Piazza. Approximation and entropy numbers of composition operators, Concrete Operators 7 (2020) 166--179.
\bibitem{M} T. Mengestie. Convex-cyclic weighted composition operators and their adjoints. Preprint: arXiv:2112.05371v1
\bibitem{M2} T. Mengestie. Cyclic and supercyclic weighted composition operators on the Fock space. Preprint: arXiv:1901.01697v1
\bibitem{M3}
T. Mengestie.
Dynamics of weighted composition operators and their adjoints on the Fock space.
Complex Anal. Oper. Theory 16 (2022), 27.
\bibitem{R} H. Rezaei. On the convex hull generated by orbit of operators. Linear Algebra Appl., 438(11) (2013), 4190-4203.
\bibitem{U} S. Ukei. Weighted composition operator on the Fock space. Proc. Amer. Math. Soc., 135(5) (2007), 1405-1410.
\bibitem{ZZ} L. Zhang and Z. Zhou. Hypercyclicity of weighted composition operators on a weighted Dirichlet space. Complex Var. Elliptic Equ., 59(7) (2014), 1043-1051, 
\bibitem{Z2} L. Zhao. Invertible weighted composition operators on the Fock space of $\C^N$. Journal of function spaces, (2015).
\bibitem{Z} L. Zhao. Normal weighted composition operators on the Fock space of $\C^n$.  Oper. Matrices, 11(3) (2017), 697-704.
\end{thebibliography}

\end{document}